\documentclass[preprint,11pt]{elsarticle}
\usepackage[T1]{fontenc}
\usepackage{lmodern}
\usepackage{microtype}
\usepackage{amsmath,amssymb,amsthm,mathtools}
\usepackage[colorlinks=true,linkcolor=blue,citecolor=blue,urlcolor=blue]{hyperref}

\newtheorem{theorem}{Theorem}[section]
\newtheorem{lemma}[theorem]{Lemma}
\newtheorem{proposition}[theorem]{Proposition}
\newtheorem{corollary}[theorem]{Corollary}
\theoremstyle{remark}

\newcommand{\E}{\mathbb E}

\hypersetup{
 pdftitle={Inequalities for Higher-Order Logarithmic Differences of Boros--Moll Coefficients},
 pdfauthor={Cindy C. Y. Gu, Yafeng Liu, and Matthew H. Y. Xie},
 pdfkeywords={Boros--Moll coefficients, logarithmic differences, moment sequences, total positivity}
}

\begin{document}

\begin{frontmatter}
\title{Inequalities for Higher-Order Logarithmic Differences of Boros--Moll Coefficients}
\author[tjut]{Cindy C.~Y. Gu}
\ead{guchunyan\_math@email.tjut.edu.cn}
\author[tjut]{Yafeng Liu}
\ead{liuyafeng@stud.tjut.edu.cn}
\author[tjut]{Matthew H.~Y. Xie\corref{cor1}}
\ead{xie@email.tjut.edu.cn}
\cortext[cor1]{Corresponding author.}
\address[tjut]{School of Mathematical Sciences, Tianjin University of Technology,
Tianjin 300384, P. R. China}

\begin{abstract}
Let $d_i(m)$ denote the coefficient of $x^i$ in the Boros--Moll polynomial $P_m(x)$.
We prove that the sequence $\{d_{i-1}(m)d_{i+1}(m)/d_i(m)^2\}_{i=1}^{m-1}$ is strictly log-concave for every $m\ge4$, settling a conjecture of Chen and Gu. For every $m\ge 3$, we show that the consecutive ratios $d_i(m)/d_{i-1}(m)$ form a strictly log-convex sequence for $1\le i\le\lfloor m/2\rfloor+1$ and a strictly log-concave sequence for $\lfloor m/2\rfloor\le i\le m$, as conjectured by Zhao.
For each fixed $i\ge 1$, we establish $(-1)^{r-1}\Delta_m^r\log d_i(m)>0$ for $r\ge 2$ and $m\ge i$, where $\Delta_m$ denotes the forward difference with respect to $m$. For $i=0$, we obtain the opposite inequalities. We thereby prove a conjecture of Jiang and Wang and two further conjectures of Zhao. For every integer $i\ge 0$, we prove strict infinite log-monotonicity of the sequence $\{d_i(m-1)d_i(m+1)/(\binom{m}{i+1}d_i(m)^2)\}_{m\ge i+1}$, strengthening another conjecture of Zhao. We further show that every positive real power of the binomially normalized ratio sequence is a Hausdorff moment sequence with a strictly totally positive Hankel matrix. We also obtain asymptotic formulas for forward differences of arbitrary fixed order, resolving a conjecture of Jiang and Wang.
For integers $i\ge 1$, we prove that the sequence $\{2^n d_i(i)/d_i(i+n)\}_{n\ge 0}$ is an infinitely divisible Hausdorff moment sequence and that the Hankel matrix $[1/d_i(i+j+k)^\tau]_{j,k\ge 0}$ is strictly totally positive for every $\tau>0$. For the integral interpolation, we show that the Hausdorff moment property of the normalized reciprocals holds precisely when the positive real coefficient index is at least $1/4$.
\end{abstract}
\begin{keyword}
Boros--Moll coefficients \sep log-concavity \sep ratio inequalities \sep
complete monotonicity \sep moment representations \sep strict total positivity
\MSC[2020] Primary 26C10 \sep Secondary 05A20 \sep 33C45 \sep 44A60
\end{keyword}
\end{frontmatter}

\section{Introduction}\label{sec:introduction}

This paper is concerned with higher-order logarithmic differences of the Boros--Moll coefficients $d_i(m)$ in each variable separately.

A positive sequence $\{a_k\}$ is said to be log-concave if $a_k^2 \ge a_{k-1}a_{k+1}$ holds at every interior index, and log-convex if all inequalities are reversed. It is strictly log-concave (or strictly log-convex) if all inequalities are strict. Furthermore, log-concavity of $\{a_k\}$ is equivalent to the consecutive ratios $a_{k+1}/a_k$ being nonincreasing, and whether this ratio sequence is log-convex or log-concave is determined by the third differences of $\log a_k$.

For a nonnegative integer $m$ and a real number $x>-1$, Boros and Moll \cite{BorosMoll} evaluated the quartic integral as
\[
\int_0^\infty \frac{1}{(t^4+2xt^2+1)^{m+1}}\,dt = \frac{\pi}{2^{m+3/2}(x+1)^{m+1/2}}P_m(x),
\]
where
\[
P_m(x)=\sum_{i=0}^{m}d_i(m)x^i.
\]
The coefficients $d_i(m)$ are given explicitly by
\begin{equation}\label{eq:coefficients}
 d_i(m) = 4^{-m}\sum_{k=i}^{m}2^k\binom{2m-2k}{m-k}\binom{m+k}{k}\binom{k}{i},
 \qquad m\ge 0,\quad 0\le i\le m.
\end{equation}

Boros and Moll \cite{BorosMollUnimodal} proved that $\{d_i(m)\}_{i=0}^m$ is unimodal. Kauers and Paule \cite{KauersPaule} proved its log-concavity by establishing recurrence relations for the coefficients. Chen, Pang, and Qu \cite{ChenPangQu} constructed a weighted combinatorial model for a normalization of these coefficients using partially 2-colored permutations, and used this model to explain the log-concavity of $\{d_i(m)\}_{i=0}^m$. Chen and Gu \cite{ChenGu} proved reverse ultra log-concavity, and Chen and Xia \cite{ChenXia} proved ratio monotonicity. Chen, Wang, and Xia \cite{ChenWangXiaInterlacing} showed that the consecutive coefficient ratios of $P_m$ and $P_{m+1}$ interlace.

To describe higher-order log-concavity, define the operator
\[
 \mathcal L(a)_k=a_k^2-a_{k-1}a_{k+1},
\]
with a finite sequence extended by zero. A sequence is $r$-log-concave if its first $r$ iterates under $\mathcal L$ are nonnegative, and infinitely log-concave if all its iterates are nonnegative.

Chen and Xia \cite[Theorem~1.4]{ChenXia2} proved 2-log-concavity of the Boros--Moll coefficients for $m\ge 2$. Chen, Dou, and Yang \cite{ChenDouYang} proved 3-log-concavity by establishing Br\"and\'en's real-rootedness conjecture for
\[
 \sum_{i=0}^m\frac{d_i(m)}{(i+2)!}z^i.
\]
Xie and Zhang \cite[Theorem~1.1 and Corollary~1.2]{XieZhangRows} proved that the generating polynomial obtained after one application of $\mathcal L$ has only real zeros. Infinite log-concavity then follows from Br\"and\'en's preservation theorem \cite{Branden}. Xie, Zhang, and Zhang \cite[Theorem~3.1]{XieZhangUnified} gave another proof using analytic estimates and finite verification. Guo \cite{GuoTuran} proved higher order Tur\'an inequalities involving four consecutive coefficients, and Zhao \cite{ZhaoTuran} proved a strict form of these inequalities.

The known infinite log-concavity results concern successive applications of $\mathcal L$ to the coefficient sequence. They do not settle the conjectures considered here, which concern the log-concavity and log-convexity of ratios of the original coefficients. These questions require estimates for higher differences of $\log d_i(m)$. We obtain these estimates from the integral and Laplace representations in \cite{XieZhangRows,XieZhangUnified}.

We first consider the ratios $d_{i-1}(m)d_{i+1}(m)/d_i(m)^2$. Since their logarithms are the second differences of $\log d_i(m)$, log-concavity of the ratio sequence is equivalent to nonpositivity of the fourth differences with respect to $i$. Chen and Gu \cite[Conjecture~4.3]{ChenGu} conjectured that the ratio sequence indexed by $2\le i\le m-2$ is log-concave. Aravinda \cite[Theorem~1.4]{Aravinda} proved strict log-concavity for all sufficiently large $m$, without giving an explicit threshold. We prove the following strict form of the conjecture for every $m\ge 4$.

\begin{theorem}[{\cite[Conjecture~4.3]{ChenGu}}]\label{thm:horizontal}
For every integer $m\ge 4$, let
\begin{equation}\label{eq:ratios}
 u_i(m)=\frac{d_{i-1}(m)d_{i+1}(m)}{d_i(m)^2},\qquad 1\le i\le m-1.
\end{equation}
Then
\begin{equation}\label{eq:strict-lc}
 u_i(m)^2>u_{i-1}(m)u_{i+1}(m),\qquad 2\le i\le m-2.
\end{equation}
\end{theorem}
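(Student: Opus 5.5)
Writing $\ell_i=\log d_i(m)$ for fixed $m$, we have $\log u_i(m)=\Delta_i^2\ell_{i-1}$. Hence \eqref{eq:strict-lc} is equivalent to
\[
\Delta_i^4\ell_{i-2}<0,\qquad 2\le i\le m-2,
\]
that is, to strict negativity of the fourth forward difference of $i\mapsto\log d_i(m)$ at every admissible window $\{i-2,\dots,i+2\}\subset\{0,\dots,m\}$. I would prove this by writing $\log d_i(m)$ as a smooth function $f(x)$ of a real variable $x$ and showing that $f^{(4)}<0$ on the relevant interval. The inequality then follows from
\[
\Delta^4 f(x)=\int f^{(4)}(x+s)\,B_4(s)\,ds,
\]
where $B_4\ge0$ is the cardinal B-spline. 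Natural candidates for this interpolation are the integral and Laplace representations of \cite{XieZhangRows,XieZhangUnified}. Failing those, I would rewrite \eqref{eq:coefficients} as a sum over $k$ with binomials $\binom{k}{i}$ extended by Gamma functions. That turns $d_i(m)$ into a mixture (over $k$) of Beta-type kernels in $i$.

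A cleaner route is to express $1/\binom{k}{i}$-type quantities as moments: $d_i(m)/\binom{m}{i}$, suitably normalized, becomes an integral $\int_0^1 t^i(1-t)^{m-i}\,d\mu_m(t)$ or similar. Then $\log$ of a moment sequence in $i$ has a cumulant structure. The second differences are variances, so they are positive, and the fourth differences relate to fourth cumulants of the tilted measures. Concretely, I would split
\[
\log d_i(m)=\log\binom{m}{i}+\log M_i ,
\]
where $M_i$ is a generalized moment. The binomial part contributes an explicit fourth difference
\[
\Delta^4\log\binom{m}{i}=-\Delta^4\log\Gamma(i+1)-\Delta^4\log\Gamma(m-i+1),
\]
which by the polygamma representation $\psi'''(x)=\int_0^\infty t^3e^{-xt}/(1-e^{-t})\,dt>0$ is strictly negative, of size about $-2/i^3-2/(m-i)^3$. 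The moment part, $\Delta^4\log M_i$, is the obstruction. Log-convexity of $M_i$ (Cauchy--Schwarz) controls only the second difference, not the fourth.

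The main step is therefore a quantitative bound showing $|\Delta^4\log M_i|$ is smaller than the polygamma term. I would try the Laplace representation $d_i(m)=\int_0^\infty e^{-i s}\,\phi_m(s)\,ds$-type formula (or a saddle-point representation in $i$). I would also try to show that the tilted distributions are log-concave in a strong sense, for instance via a Prékopa--Brascamp--Lieb bound giving fourth cumulants of order $O(1/m^3)$ with an explicit constant. Comparing constants then yields $\Delta^4\ell<0$ for all $m\ge m_0$ and all $2\le i\le m-2$, where the boundary windows $i=2$ and $i=m-2$ need separate care since there $1/i^3$ dominates anyway. For the finitely many $m$ with $4\le m<m_0$, I would verify \eqref{eq:strict-lc} directly by exact rational computation from \eqref{eq:coefficients}.

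I expect the hard part to be making the error estimate for the fourth cumulant explicit and uniform in $i/m$, especially near the mode $i\approx m/\sqrt2$-region where the polygamma term is smallest, of order $m^{-3}$. There the two contributions are of the same order, and I need a sign, not just a size. If the crude bound fails there, I would instead compute the leading $m^{-3}$ asymptotic of both pieces via the representation and show that the combined coefficient is a strictly negative function of $x=i/m$. I would then handle the remainder $O(m^{-4})$ with an explicit constant.
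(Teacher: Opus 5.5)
Your setup is the paper's: the reduction to $\Delta_i^4\log d_{i-2}(m)<0$, a smooth interpolation with a nonnegative kernel for the fourth difference, and the split $\log d_i(m)=\log\binom mi+\log M(i,m)$. In that split the fourth $s$-derivative of $\log M(s,m)$ is the fourth cumulant $\kappa_4$ of $t=-\log u$ under the tilted law. The gap is the decisive step. You need an explicit upper bound on $\kappa_4$ that beats the polygamma term uniformly in $0\le s\le m$, all the way down to $m=4$. You only announce this bound, and the route you sketch for it would not work.

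\begin{itemize}
\item \emph{Log-concavity fails.} Near $t=0$ the tilted density is proportional to $t^{-1/2}\exp\bigl(-(s+\tfrac14)t-\tfrac m4t^2+\cdots\bigr)$. The factor $t^{-1/2}$ makes its logarithm convex near $0$, so Pr\'ekopa--Brascamp--Lieb is not available in this variable.
\item \emph{The claimed size is false.} The bound $\kappa_4=O(m^{-3})$ does not hold uniformly. For bounded $s$ the law lives on scale $m^{-1/2}$, and $\kappa_4$ is of exact order $m^{-2}$.
\item \emph{The bulk is not the hard region.} For $s\asymp m$ the law is close to a Gamma$(1/2)$ law with rate $s$, so $\kappa_4\approx 3/s^4=O(m^{-4})$. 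This is an order below $\psi_3(s+1)\approx 2/s^3$. The two contributions are not of the same order there, and no sign analysis at order $m^{-3}$ is needed. (The polygamma sum is smallest at $i=m/2$, not $m/\sqrt2$.)
\end{itemize}

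The ratio $\kappa_4/\psi_3(s+1)$ actually peaks at $s\asymp\sqrt m$, where an expansion in $x=i/m$ breaks down and its $O(m^{-4})$ remainder is not uniform. So your ``$m\ge m_0$ plus finite check'' scheme would still need bounds uniform across all scales of $s$, with an explicit $m_0$. Those bounds are exactly what is missing.

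The missing idea is the integration-by-parts identity
\[
\E_{s,m}[t^{j+1}\rho(t,s,m)]=\bigl(j+\tfrac12\bigr)\mu_j,
\]
where $\rho$ is increasing in $t$ and satisfies $s<\rho<s+m+1$. Here is how it closes the argument.

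\begin{itemize}
\item \emph{Cumulant bound.} Chebyshev's covariance inequality applied to this identity gives $\mu_2\le3\mu_1^2$, $\mu_3\le 5\mu_2^2/(3\mu_1)$ and $\mu_4\le7\mu_3^2/(5\mu_2)$. Together these force $\kappa_4\le\frac{16}{35}\mu_4$, which is the Gamma$(1/2)$ extremal ratio.
\item \emph{Moment bounds.} The same identity gives $\mu_4<105/(16s^4)$. A comparison with a Beta law gives $\mu_4<5/(m+s)^2$.
\item \emph{Conclusion.} Hence $\partial_s^4\log M(s,m)<\min\{16/(7(m+s)^2),\,3/s^4\}$. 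Combined with $\psi_3(u)>2/u^3+3/u^4$, this lies below $\psi_3(s+1)$ for all $s\ge0$ once $m\ge4$, switching between the two bounds at $s=11/4$.
\end{itemize}

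A single polygamma term suffices, and no asymptotic expansion or finite verification is needed.
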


Theorem~\ref{thm:horizontal} includes the inequalities at $i=2$ and $i=m-2$, in addition to those at $3\le i\le m-3$ required by the original conjecture. The proof shows that a smooth interpolation of $\log d_i(m)$ has a negative fourth derivative throughout $[0,m]$. This approach also yields strict log-concavity for the interpolated ratios at any step size $h$ with $0<h<m/2$.

Turning to the consecutive ratios, we define
\begin{equation}\label{eq:consecutive}
 q_i(m)=\frac{d_i(m)}{d_{i-1}(m)},\qquad 1\le i\le m.
\end{equation}
They are related to $u_i(m)$ by
\[
 \log\frac{q_{i-1}(m)q_{i+1}(m)}{q_i(m)^2}=\log\frac{u_i(m)}{u_{i-1}(m)},\qquad 2\le i\le m-1.
\]
By Theorem~\ref{thm:horizontal}, this expression is strictly decreasing in $i$ for $m\ge 4$, so its sign can change at most once. The following theorem determines where the sign changes.

\begin{theorem}[{\cite[Conjecture~4.4]{Zhao}}]\label{thm:half}
For every integer $m\ge 3$,
\begin{align}
 q_i(m)^2&<q_{i-1}(m)q_{i+1}(m), && 2\le i\le \lfloor m/2\rfloor,\label{eq:left-half}\\
 q_i(m)^2&>q_{i-1}(m)q_{i+1}(m), && \lfloor m/2\rfloor+1\le i\le m-1.\label{eq:right-half}
\end{align}
\end{theorem}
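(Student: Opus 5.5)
Since $\log\bigl(q_{i-1}(m)q_{i+1}(m)/q_i(m)^2\bigr)=\log u_i(m)-\log u_{i-1}(m)$, Theorem~\ref{thm:horizontal} shows that for $m\ge4$ the sequence
\[
T_i(m):=\log u_i(m)-\log u_{i-1}(m),\qquad 2\le i\le m-1,
\]
is strictly decreasing in $i$. Hence both \eqref{eq:left-half} and \eqref{eq:right-half} follow from two boundary inequalities at the middle of the range. The first is $T_{\lfloor m/2\rfloor}(m)>0$, equivalently $u_{\lfloor m/2\rfloor}(m)>u_{\lfloor m/2\rfloor-1}(m)$; this is needed only when $\lfloor m/2\rfloor\ge2$. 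The second is $T_{\lfloor m/2\rfloor+1}(m)<0$, equivalently $u_{\lfloor m/2\rfloor+1}(m)<u_{\lfloor m/2\rfloor}(m)$. The case $m=3$ has only $i=2$ and is checked directly from \eqref{eq:coefficients}: $P_3(x)=\tfrac{1}{8}(77+165x+120x^2+30x^3)$. So the theorem reduces to locating the maximum of the sequence $u_i(m)$, which is strictly log-concave and so unimodal. Concretely, I must show that the unique maximal index lies at $\lfloor m/2\rfloor$, i.e.\ between $i=\lfloor m/2\rfloor-1$ and $\lfloor m/2\rfloor+1$ with the stated strict comparisons.

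To do this I would reuse the smooth interpolation $f_m(x)$ of $\log d_x(m)$ from the proof of Theorem~\ref{thm:horizontal}, built from the integral/Laplace representations of \cite{XieZhangRows,XieZhangUnified}, which satisfies $f_m''''<0$ on $[0,m]$. Write
\[
\log u_i=f(i+1)-2f(i)+f(i-1)=\int_{-1}^{1}(1-|t|)\,f''(i+t)\,dt .
\]
Then $T_{i}$ is a weighted average of $f'''$ over $[i-2,i+1]$, with the triangular-convolved (B-spline) kernel. Since $f'''$ is strictly decreasing, it has at most one zero $x_0(m)$. The two boundary inequalities then follow if I show that $x_0(m)$ lies in a window around $m/2$ that is narrow enough. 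Roughly, I want the $B$-spline average centred at $\lfloor m/2\rfloor-\tfrac12$ to be positive and the one centred at $\lfloor m/2\rfloor+\tfrac12$ to be negative. This amounts to proving $f'''(x)>0$ for $x\le \lfloor m/2\rfloor-\tfrac12-\delta$ and $f'''(x)<0$ for $x\ge\lfloor m/2\rfloor+\tfrac12+\delta$, together with a quantitative estimate that handles the kernel's spread.

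The main obstacle is the kernel's spread: the $B$-spline kernel has support of width $3$, while the admissible window for $x_0$ has width only about $1$. Sign information on $f'''$ alone is therefore insufficient. I would first try to obtain an asymptotic expansion of $f'''$ near $x=m/2$. Heuristically, $d_i(m)$ behaves like $\binom{m}{i}$ times slowly varying factors, so the dominant part of $\log u_i$ is $\log\frac{i(m-i)}{(i+1)(m-i+1)}$. This part is antisymmetric about $i=(m+1)/2$ in the sense relevant here, which places the sign change near $m/2$. The correction terms coming from the $2^k$ weighting push the sign change slightly left, and I expect this is what produces $\lfloor m/2\rfloor$ rather than $\lceil m/2\rceil$. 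I would make this rigorous as follows. Compute the exact difference $u_{j}(m)-u_{j-1}(m)$ at the middle indices using the known Kauers--Paule recurrences in $i$, which express $d_{i\pm1}$ via $d_i,d_{i-1}$ with rational coefficients. Bound the resulting expression using ratio bounds of Chen--Xia type for $q_i(m)$ with error $O(1/m^2)$. Prove the inequality for $m\ge m_0$ by these estimates, and verify the finitely many $m<m_0$ numerically from \eqref{eq:coefficients}. The delicate part is making the $O(1/m^2)$ error terms explicit, and treating the parities of $m$ separately because the margin of the leading term differs between them.
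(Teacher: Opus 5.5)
\textbf{There is a genuine gap.} Your reduction is correct, and it is how the paper handles the left half. By Theorem~\ref{thm:horizontal}, $K_i=\log\bigl(q_{i-1}(m)q_{i+1}(m)/q_i(m)^2\bigr)$ is strictly decreasing in $i$ for $m\ge4$. So everything rests on $K_k>0$ and $K_{k+1}<0$ with $k=\lfloor m/2\rfloor$. You never prove these two inequalities, and the route you sketch cannot prove them. In the middle of the range, $K_i$ has size $\Theta(m^{-3})$. Its binomial part is $\log\beta_i(m)$ with $\beta_i(m)-1=\frac{(m+1)(m+1-2i)}{(i^2-1)(m-i+1)^2}=O(m^{-3})$, and the rest is roughly $-8/m^3$. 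Bounds that fix $q_{i-1},q_i,q_{i+1}$ only to relative error $O(m^{-2})$ fix $K_i$ only to $O(m^{-2})$. Reducing to a single ratio via the three-term recurrence $i(i+1)d_{i+1}(m)=i(2m+1)d_i(m)-(m+i)(m+1-i)d_{i-1}(m)$ does not help, because the sensitivity of $K_i$ to $\log q_i$ is of order one there. The error-to-signal ratio grows like $m$, so the argument breaks down for large $m$, where no finite check can rescue it. For odd $m$ and $i=(m+1)/2$ one even has $\beta_i(m)=1$ exactly. The sign of $K_i$ is then the sign of the non-binomial term alone, and only an exact argument can decide it.

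The missing idea is to separate the binomial factor exactly rather than heuristically. From $d_i(m)=\binom mi C(m)M(i,m)$ one gets $q_i(m)=\frac{m-i+1}{i}\cdot\frac{M(i,m)}{M(i-1,m)}$. Hence $K_i=\log\beta_i(m)+\Delta_i^3F(i-2,m)$ with $F(s,m)=\log M(s,m)$, and $\partial_s^3F=-\kappa_3<0$ by Lemma~\ref{lem:cumulant-bound}. Since $\beta_i(m)\le1$ whenever $2i\ge m+1$, this gives \eqref{eq:right-half} for every $m\ge3$ at once, with no monotonicity and no numerics. Your remark that sign information on the third derivative is insufficient is true for $\log D(\cdot,m)$ but false for $F$.

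For \eqref{eq:left-half} a sharp size bound is required. The term $\log\beta_k(m)\ge\log\beta_k(2k)=\Delta_k^3\log(k-1)\approx2k^{-3}$ must beat $-\Delta_k^3F(k-2,m)\approx k^{-3}$, which leaves only a factor of two. The paper gets this from $\kappa_3\le\frac8{15}\mu_3$ and $\mu_3<\frac{15}{8(s+1/2)^3}$ on $[0,m-1]$ (Corollary~\ref{cor:third-derivative}), combined with Lemma~\ref{lem:midpoint}. It treats $k=2$ separately via $\kappa_3\le\frac8{15}\sqrt{\mu_2\mu_4}$.

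Finally, your $P_3$ is wrong: \eqref{eq:coefficients} gives $P_3(x)=\frac1{16}(77+172x+140x^2+40x^3)$. With your coefficients one would get $q_2^2=64/121<15/28=q_1q_3$, contradicting the theorem. The correct coefficients give $q_2^2=1225/1849>344/539=q_1q_3$.
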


Now fix $\ell$ and consider $\{d_\ell(m)\}_{m\ge \ell}$. Jiang and Wang \cite{JiangWang} proved that $\{d_0(m)\}_{m\ge 0}$ is log-convex, whereas $\{d_\ell(m)\}_{m\ge \ell}$ is log-concave for each $\ell\ge 1$. In Conjecture~4.5, they proposed an extension of these inequalities to logarithmic differences of every order. Let
\[
 \Delta_m f(m)=f(m+1)-f(m)
\]
denote the forward difference with respect to $m$. Zhang and Zhao \cite[Theorem~3.1]{ZhangZhaoBriggs} proved that the consecutive ratios of $\{d_\ell(m)\}_{m\ge \ell}$ form a strictly log-convex sequence for every $\ell\ge 1$. In terms of logarithmic differences, this means
\[
 \Delta_m^3\log d_\ell(m)>0,\qquad \ell\ge 1,\quad m\ge \ell.
\]
The following theorem proves Jiang and Wang's Conjecture~4.5 for all orders.

\begin{theorem}[{\cite[Conjecture~4.5]{JiangWang}}]\label{thm:column-log-differences}
For integers $\ell\ge 0$ and $m\ge \ell$, one has
\[
 \Delta_m\log d_\ell(m)>0.
\]
For every integer $r\ge 2$, the higher differences satisfy
\begin{align}
 (-1)^r\Delta_m^r\log d_0(m)&>0 &&(r\ge 2,\ m\ge 0),\label{eq:zero-differences}\\
 (-1)^{r-1}\Delta_m^r\log d_\ell(m)&>0
 &&(r\ge 2,\ \ell\ge 1,\ m\ge \ell).\label{eq:positive-differences}
\end{align}
\end{theorem}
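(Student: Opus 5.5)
The plan is to prove the statement through a continuous interpolation $f_\ell(x)$ of $\log d_\ell(m)$ in the variable $m$, and then transfer sign information from derivatives to forward differences. For any smooth $f$ one has the B-spline identity
\[
\Delta_m^r f(m)=\int_0^r f^{(r)}(m+y)\,B_r(y)\,dy ,
\]
where $B_r\ge 0$ is the cardinal B-spline of order $r$ and has positive total mass. Consequently, a strict sign of $f^{(r)}$ on $[m,m+r]$ gives the same strict sign for $\Delta_m^r f(m)$. Equivalently, if $f(x)=ax+c\pm\int_0^\infty(1-e^{-xs})\,\nu(ds)$ with $\nu$ a nonzero positive measure, then
\[
\Delta_m^r f(m)=\mp(-1)^r\int_0^\infty e^{-ms}(1-e^{-s})^r\,\nu(ds)\qquad(r\ge 2),
\]
and this has strict sign $\mp(-1)^r$. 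So the whole task is to exhibit the right Bernstein-type representation for each column. The first differences are handled separately in each case.

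\emph{The case $\ell=0$.} Setting $x=0$ in the quartic integral gives
\[
d_0(m)=\frac{2^{m+3/2}}{\pi}\int_0^\infty\frac{dt}{(1+t^4)^{m+1}} .
\]
After the substitution $u=t^4$ the integral is a Beta integral, and
\[
d_0(m)=\frac{2^{m+3/2}\,\Gamma(1/4)}{4\pi}\cdot\frac{\Gamma(m+3/4)}{\Gamma(m+1)} .
\]
Thus $f_0(x)=x\log 2+\log\Gamma(x+3/4)-\log\Gamma(x+1)+\text{const}$. For $k\ge 1$, the standard representation
\[
\psi^{(k)}(x)=(-1)^{k+1}\int_0^\infty\frac{s^k e^{-xs}}{1-e^{-s}}\,ds
\]
gives, for $r\ge 2$,
\[
f_0^{(r)}(x)=\psi^{(r-1)}(x+3/4)-\psi^{(r-1)}(x+1)
=(-1)^{r}\int_0^\infty \frac{s^{r-1}e^{-xs}\bigl(e^{-3s/4}-e^{-s}\bigr)}{1-e^{-s}}\,ds .
\]
The integrand is positive, so $(-1)^r f_0^{(r)}>0$, and \eqref{eq:zero-differences} follows from the spline identity. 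The first difference is explicit:
\[
\Delta_m\log d_0(m)=\log\frac{2(m+3/4)}{m+1}>0 .
\]

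\emph{The case $\ell\ge 1$.} Here I would aim for a representation
\[
d_\ell(\ell+n)=2^n d_\ell(\ell)\,\Big/\int_{[0,1]}t^n\,\mu_\ell(dt),
\]
in which the moment sequence $a_n=\int t^n\mu_\ell(dt)$ is infinitely divisible. This is exactly the form announced in the abstract. Infinite divisibility of a Hausdorff moment sequence means $a_n=e^{-\phi(n)}$ with $\phi$ a Bernstein function,
\[
\phi(x)=bx+\int_0^\infty(1-e^{-xs})\,\nu(ds) .
\]
Then $\log d_\ell(\ell+n)=n\log 2+\text{const}+\phi(n)$. The differences $\Delta^r$ for $r\ge 2$ have sign $(-1)^{r-1}$ strictly, provided $\nu\ne 0$. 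That nondegeneracy holds because $\{d_\ell(m)\}$ is not geometric, and it already follows from the known strict log-concavity of the column. For the first difference, $\Delta\log d_\ell=\log 2+\Delta\phi>0$, since $\phi$ is increasing. To get the representation, I would start from the Laplace/integral representations of \cite{XieZhangRows,XieZhangUnified}, equivalently from \eqref{eq:coefficients} rewritten via $\binom{2m-2k}{m-k}4^{-(m-k)}=\frac{1}{\pi}\int_0^1 t^{m-k-1/2}(1-t)^{-1/2}\,dt$. The goal is to express $2^{-m}d_\ell(m)$ as a product of Gamma-ratio factors, such as $\Gamma(m+a)/\Gamma(m+b)$ with $a>b$, times a hypergeometric factor whose reciprocal is a Laplace transform of a positive measure with log-completely-monotone structure.

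The main obstacle is this last step: proving that the reciprocal sequence $2^n/d_\ell(\ell+n)$ is not merely a moment sequence but an infinitely divisible one. Gamma ratios contribute Bernstein pieces, by the same $\psi^{(k)}$ computation as in the case $\ell=0$. The residual hypergeometric factor is the hard part. I would try first to write it as $\int_0^1(1-\theta(1-u))^{-m}\,\rho(du)$-type mixtures and show that its logarithm has a completely monotone derivative, via a Stieltjes-function argument, since reciprocals and logarithmic derivatives of Stieltjes functions are again well controlled. If that fails for small $\ell$, I would fall back on the direct derivative bound $(-1)^{r-1}f_\ell^{(r)}(x)>0$ on $[\ell,\infty)$, using asymptotic expansions for large $x$ together with the explicit Gamma structure near $x=\ell$.
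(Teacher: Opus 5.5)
Your case $\ell=0$ is complete and agrees with the paper: the paper's kernel $h(0,t)=-k(0,t)=-(e^{t/4}-1)/(e^t-1)$ is exactly your $\psi$-computation for $\Gamma(m+3/4)/\Gamma(m+1)$. Your step from a Bernstein representation to the sign pattern of all differences is also the paper's mechanism, via \eqref{eq:laplace-difference}. The gap is the case $\ell\ge1$, where you announce the representation but never prove it. You take it from the abstract, i.e.\ the conclusion of Theorem~\ref{thm:threshold-moments}, whose proof rests on exactly the fact you are missing. You also invoke an uncited characterization of infinitely divisible Hausdorff sequences as $e^{-\phi(n)}$ with $\phi$ Bernstein. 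What must actually be shown is that $\partial_z\log\bigl(2^{-z}D(\ell,\ell+z)\bigr)$ is completely monotone, i.e.\ $h(\ell,t)>0$ in \eqref{eq:column-first-derivative}.

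Your sketch does not locate where $\ell\ge1$ enters. Moreover, the heuristic that \emph{Gamma ratios contribute Bernstein pieces} is false as stated. Every column carries the factor $D(0,y)=2^y\Gamma(y+3/4)/(\Gamma(3/4)\Gamma(y+1))$, which has the opposite sign; that is why $d_0$ is log-convex. The paper closes the gap in three steps.
\begin{itemize}
\item It uses the factorization \eqref{eq:factorization}, $D(\ell,y)=D(0,y)\binom y\ell\frac{\binom{y+\ell}{\ell}}{\binom{y+\ell+1/4}{\ell}}Q(\ell,y+\ell+1)$.
\item It imports the nontrivial fact \eqref{eq:logQ-measure} that $-\log Q(\ell,z)$ is the Laplace transform of a positive measure. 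This gives $k(\ell,t)<k(0,t)=(e^{t/4}-1)/(e^t-1)$.
\item The binomial factor $\binom y\ell$ contributes the kernel $(e^{\ell t}-1)/(e^t-1)$, which dominates $k(0,t)$ precisely when $\ell\ge1/4$.
\end{itemize}
Equivalently, the wrong-signed ratio $\Gamma(y+3/4)/\Gamma(y+1)$ must be absorbed into $\binom y\ell$ to give $\Gamma(y+3/4)/\Gamma(y-\ell+1)$. Its kernel $(e^{-(1-\ell)t}-e^{-3t/4})/(1-e^{-t})$ is positive only for $\ell>1/4$.

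Some quantitative input of this kind is unavoidable. By Proposition~\ref{prop:column-curvature}, $D(\lambda,y)$ is eventually strictly log-convex in $y$ for real $\lambda\in(0,1/4)$, so the interpolated analogue of the inequality fails there. Your Stieltjes-function idea is not developed far enough to produce the positive Laplace measure for $-\log Q$. Your asymptotic fallback cannot replace it either: it controls each fixed $r$ only for large $m$, not all $r\ge2$ and all $m\ge\ell$ at once.
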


The proof relies on a Laplace representation whose kernel is positive for $\ell\ge 1$ and negative for $\ell=0$. Each successive difference contributes a factor of $1-e^{-t}$ and a change of sign, yielding the inequalities for every $r\ge 2$.

These inequalities extend to successive ratio sequences. For a positive sequence $a$, define the ratio operator by
\[
 (\mathcal R a)_m=\frac{a_{m+1}}{a_m}.
\]
Following Chen, Guo, and Wang \cite{ChenGuoWang}, we say that $a$ is infinitely log-monotonic if $\mathcal R^j a$ is log-convex for every even $j\ge 0$ and log-concave for every odd $j\ge 1$. We say it is strictly infinitely log-monotonic if all the corresponding inequalities are strict. Chen, Guo, and Wang \cite[Theorem~2.1]{ChenGuoWang} proved that a positive smooth function yields an infinitely log-monotonic sequence at the integers if the second derivative of its logarithm is completely monotone.

Using the identity
\[
 \Delta_m^2\log(\mathcal R^j a)_m=\Delta_m^{j+2}\log a_m
\]
and Theorem~\ref{thm:column-log-differences}, we obtain strict infinite log-monotonicity of $\{d_0(m)\}_{m\ge 0}$ and of $\{d_\ell(m+1)/d_\ell(m)\}_{m\ge \ell}$ for every $\ell\ge 1$. This proves Zhao's Conjecture~4.5 \cite{Zhao}.

For each integer $\ell\ge 0$, define
\begin{equation}\label{eq:transposed-ratios}
 c_m^{(\ell)}=\frac{d_\ell(m-1)d_\ell(m+1)}{d_\ell(m)^2}, \qquad m\ge \ell+1.
\end{equation}
Taking $r=4$ in Theorem~\ref{thm:column-log-differences} yields strict log-concavity of $\{c_m^{(\ell)}\}_{m\ge \ell+1}$ for every $\ell\ge 1$, confirming Zhao's Conjecture~4.2 \cite{Zhao}. Zhao also considered the binomial normalization
\begin{equation}\label{eq:binomial-normalized-sequence}
 b_m=\frac{c_m^{(\ell)}}{\binom m{\ell+1}}, \qquad m\ge \ell+1.
\end{equation}
Zhao \cite[Conjecture~4.3]{Zhao} conjectured that $\{b_m\}_{m\ge \ell+1}$ is log-convex. We prove the following strict inequality.

\begin{theorem}[{\cite[Conjecture~4.3]{Zhao}}]\label{thm:binomial-normalization}
For all integers $\ell\ge 0$ and $m\ge \ell+2$,
\begin{equation}\label{eq:43}
 \frac{(c_m^{(\ell)})^2}{c_{m-1}^{(\ell)}c_{m+1}^{(\ell)}}<\frac{m(m-\ell)}{(m+1)(m-\ell-1)}.
\end{equation}
\end{theorem}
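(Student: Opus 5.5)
For every $\ell\ge 0$ I would first rewrite \eqref{eq:43} as a log-convexity statement and then compare two difference quantities using the Laplace representation behind Theorem~\ref{thm:column-log-differences}. Taking logarithms, $\log c_m^{(\ell)}$ is the second central difference of $\log d_\ell$ at $m$. Hence the logarithm of the left side of \eqref{eq:43} equals $-\Delta_m^4\log d_\ell(m-2)$.

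For the right side, put $k=\ell+1$ and use $\log\binom mk=\log m!-\log(m-k)!-\log k!$. The second central difference of $\log x!$ at $x$ is $\log\frac{x+1}{x}$. Applying this at $x=m$ and $x=m-k$ gives
\[
\log\frac{m(m-\ell)}{(m+1)(m-\ell-1)}=-\bigl(\log\tbinom{m+1}{k}-2\log\tbinom mk+\log\tbinom{m-1}{k}\bigr).
\]
So \eqref{eq:43} is exactly strict log-convexity of $b_m=c_m^{(\ell)}/\binom m{\ell+1}$:
\[
\Delta_m^4\log d_\ell(m-2)\;>\;\bigl(\log\tbinom{m+1}{k}-2\log\tbinom mk+\log\tbinom{m-1}{k}\bigr).
\]
The right side is negative for $m\ge k+1$. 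Indeed, $\log\frac{m}{m+1}+\log\frac{m-k+1}{m-k}>0$ because $\frac{x+1}{x}$ is decreasing in $x$.

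\emph{Case $\ell=0$.} This is immediate. By \eqref{eq:zero-differences} with $r=4$, $\Delta_m^4\log d_0(m-2)>0$, and we have just seen that the binomial side is negative. Hence the inequality holds.

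\emph{Case $\ell\ge1$.} Here \eqref{eq:positive-differences} gives $\Delta_m^4\log d_\ell<0$, so the binomial term must compensate. I would use the Laplace representation from the proof of Theorem~\ref{thm:column-log-differences}. I expect it to take the form
\[
\Delta_m\log d_\ell(m)=A_\ell+\int_0^\infty e^{-mt}K_\ell(t)\,dt,\qquad K_\ell>0,
\]
possibly after subtracting explicit elementary terms. Then
\[
\Delta_m^4\log d_\ell(m-2)=-\int_0^\infty e^{-(m-2)t}(1-e^{-t})^3K_\ell(t)\,dt.
\]
In the same way, using $\log\frac{x+1}{x}=\int_0^\infty e^{-xt}\frac{1-e^{-t}}{t}\,dt$, the binomial side becomes
\[
-\int_0^\infty e^{-(m-k)t}\bigl(1-e^{-kt}\bigr)\frac{1-e^{-t}}{t}\,dt .
\]
After shifting both integrals to the common weight $e^{-(m-k)t}$, it suffices to prove the pointwise kernel inequality
\[
e^{-(k-2)t}(1-e^{-t})^2K_\ell(t)<\frac{1-e^{-kt}}{t},\qquad t>0.
\]
Positivity of the difference kernel then yields \eqref{eq:43} for all $m\ge \ell+2$ simultaneously.

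\emph{Main obstacle.} I expect the main difficulty to be this kernel comparison, i.e.\ obtaining explicit enough control of $K_\ell$. Near $t=0$, both sides must be expanded: the extra factor $(1-e^{-t})^2=O(t^2)$ on the left should give the inequality, but only if $K_\ell(t)=O(1/t)$, or at least $o(1/t^{2})$, as $t\to0$. For large $t$, I would use the exponential decay of $K_\ell$. If a uniform pointwise bound fails in a middle range of $t$, I would instead bound the integrals directly. That reduces the problem to finitely many small $m$ for each $\ell$, which I would then check from the explicit formula \eqref{eq:coefficients}. The bounds would need to be uniform in $\ell$, and I would try to obtain that uniformity from the structure $\binom{k}{i}$ in \eqref{eq:coefficients}.
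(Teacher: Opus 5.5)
\textbf{The $\ell\ge1$ case has a genuine gap.} Your reduction is correct: \eqref{eq:43} is equivalent to strict log-convexity of $b_m$, and the case $\ell=0$ follows from \eqref{eq:zero-differences}. The pointwise kernel inequality you write down is also sufficient for $\ell\ge1$. But there you stop exactly at the step that carries all the content, and the plan you sketch for it would fail.

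With the representation actually available from the proof of Theorem~\ref{thm:column-log-differences}, one has $A_\ell=\log2$ and
\[
K_\ell(t)=\frac{1-e^{-t}}{t}\,h(\ell,t),\qquad h(\ell,t)=\frac{e^{\ell t}-1}{e^t-1}-k(\ell,t).
\]
Hence $K_\ell(t)\sim e^{(\ell-1)t}/t$ as $t\to\infty$ (even for $\ell=1$ one has $K_1(t)\sim 1/t$). So $K_\ell$ does not decay exponentially. After the factor $e^{-(\ell-1)t}(1-e^{-t})^2$, both sides of your kernel inequality are asymptotic to $1/t$. No decay estimate or crude size bound on $K_\ell$ can settle the large-$t$ range. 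Your fallback of finite verification "uniform in $\ell$" is also not an argument covering infinitely many $\ell$.

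\textbf{The missing idea is to split off the binomial factor.} The first term of $h(\ell,t)$ is exactly the kernel of $\Delta_m\log\binom m\ell$, since
\[
\int_0^\infty e^{-mt}\,\frac{e^{(\ell-1)t}-e^{-t}}{t}\,dt=\log\frac{m+1}{m+1-\ell}.
\]
The remainder is $-\frac{1-e^{-t}}{t}k(\ell,t)$, and $k(\ell,t)>0$ by \eqref{eq:coefficient-kernel-bounds}. This positivity is the content of the Laplace representation \eqref{eq:normalized-log-derivative} for $D(\ell,y)/(2^y\binom y\ell)$. Therefore
\[
K_\ell(t)<\frac{e^{(\ell-1)t}-e^{-t}}{t},
\]
and your kernel inequality reduces to
\[
(1-e^{-t})^2\bigl(1-e^{-\ell t}\bigr)<1-e^{-(\ell+1)t},\qquad t>0,
\]
which is immediate. (The behaviour near $t=0$ is also harmless, since $K_\ell$ is bounded there.)

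With this step inserted, your proof is complete. It is then essentially the paper's argument, written at the level of a single second difference. The paper uses the same positivity of $k(\ell,t)$, but packages it as strict complete monotonicity of $\log H(\ell,\ell+1+z)$, plus the elementary terms $-\log(y-a)$ in $J(\ell,y)$. That packaging gives all alternating-sign derivatives of the interpolation of $\log b_m$. Hence the paper also gets strict infinite log-monotonicity and the Bernstein moment representation, not only \eqref{eq:43}.
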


This inequality is equivalent to strict log-convexity of $\{b_m\}_{m\ge \ell+1}$. We also prove that this sequence is strictly infinitely log-monotonic. The next theorem strengthens strict log-convexity to positivity of all Hankel minors. A matrix is strictly totally positive if every finite minor with arbitrary increasing row and column indices is positive.

\begin{theorem}\label{thm:binomial-total-positivity}
For every integer $\ell\ge 0$ and every real $\tau>0$, the Hankel matrix
\[
 \left[b_{\ell+1+i+j}^{\,\tau}\right]_{i,j\ge 0}
\]
is strictly totally positive.
\end{theorem}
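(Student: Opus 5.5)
The plan is to show that $b_{\ell+1+n}^{\tau}$ is a Hausdorff moment sequence, i.e. $b_{\ell+1+n}^\tau=\int_{[0,1]}x^n\,d\mu_\tau(x)$, with a representing measure $\mu_\tau$ whose support is infinite. The abstract already advertises this as the intended route.

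Strict total positivity then follows from the classical fact for moment sequences. For indices $i_1<\dots<i_k$ and $j_1<\dots<j_k$, the Cauchy--Binet (Andréief) identity gives
\[
\det\bigl[\textstyle\int x^{i_p+j_q}d\mu\bigr]=\frac1{k!}\int\det[x_s^{i_p}]\det[x_s^{j_q}]\,d\mu(x_1)\cdots d\mu(x_k).
\]
Each generalized Vandermonde determinant is positive on $0<x_1<\dots<x_k$. So the minor is positive as soon as $\mu$ has infinitely many points of support in $(0,1)$.

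To obtain the moment representation, write $b_{\ell+1+n}=\exp(-\phi(n))$ and show that $\phi'$ (for a suitable smooth interpolation) is a Bernstein function, or equivalently that $\phi''$ is completely monotone with the right boundary behaviour. It suffices to prove
\[
\log b_{\ell+1+n}=\log b_{\ell+1}-\int_0^\infty (1-e^{-nt})\,\frac{\nu(dt)}{t}-an
\]
with $a\ge 0$ and a positive Lévy-type measure $\nu$. Then $b^\tau$ is, for every $\tau>0$, the Laplace transform in $n$ of an infinitely divisible measure on $[0,\infty)$. After the substitution $x=e^{-t}$ this becomes a moment sequence on $(0,1]$, and the measure is non-degenerate whenever $\nu\neq0$.

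I would assemble $\log b_m$ from three pieces. The first is $\log c^{(\ell)}_m=\Delta_m^2\log d_\ell(m-1)$. The Laplace representation behind Theorem~\ref{thm:column-log-differences} gives $\log d_\ell(m)$ as a linear term plus $\int_0^\infty e^{-mt}K_\ell(t)\,dt$ up to affine terms. Hence $\log c_m^{(\ell)}=\int_0^\infty e^{-(m-1)t}(1-e^{-t})^2K_\ell(t)\,dt$, with $K_\ell$ of a fixed sign. The second piece is $-\log\binom m{\ell+1}$. Using $\log\Gamma$ (Binet/Malmstén), it equals $-\sum_{j=0}^{\ell}\log(m-j)+\log(\ell+1)!$, and $-\log(m-j)=\int_0^\infty (e^{-(m-j)t}-e^{-t})\,dt/t$. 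Combining, $\log b_m$ becomes a constant plus $\int_0^\infty e^{-mt}\,W(t)\,dt/t$ up to normalization, with
\[
W(t)=\sum_{j=0}^{\ell}e^{jt}+t\,e^{t}(1-e^{-t})^2K_\ell(t)\cdot(\text{sign}).
\]
Log-convexity of all orders (a moment sequence) requires exactly that the second $m$-derivative $\int t\,e^{-mt}W(t)\,dt$ be completely monotone for $m\ge\ell+1$. Up to the shift, the needed positivity is $W(t)\,e^{-(\ell+1)t}\ge0$ as a measure, strictly on a set of positive measure. This also delivers the strict infinite log-monotonicity mentioned after Theorem~\ref{thm:binomial-normalization}.

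\textbf{Main obstacle.} For $\ell\ge1$ the kernel term from $\log c$ has the wrong sign (it makes $c$ log-concave), so it must be dominated by the binomial term $\sum_j e^{jt}$. The task is therefore an explicit pointwise inequality $t\,e^{t}(1-e^{-t})^2|K_\ell(t)|<\sum_{j=0}^{\ell}e^{jt}$ for all $t>0$. I would attempt it from the explicit form of $K_\ell$ in \cite{XieZhangUnified,XieZhangRows}, bounding $|K_\ell(t)|\le (\ell+1)/t$-type estimates near $0$ and using exponential decay for large $t$, with a finite numerical check in between. If only an eventual inequality holds, the first few $\ell$ would need a separate check. For $\ell=0$ both terms have the good sign and the inequality is immediate.
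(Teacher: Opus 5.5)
Your architecture matches the paper's: a Laplace representation of $\log b_m$, a nonlinear Bernstein function, representing measures with unbounded support, and the Andr\'eief/generalized-Vandermonde argument for the minors. For $\ell=0$ the argument goes through, since both contributions have the good sign. For $\ell\ge1$, however, the step you call the main obstacle is not proved, and the method you sketch cannot prove it.

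With the kernel from the proof of Theorem~\ref{thm:column-log-differences} one has $\log c_m^{(\ell)}=-\int_0^\infty e^{-(m-1)t}(1-e^{-t})^2h(\ell,t)\,dt/t$. So $|K_\ell(t)|=h(\ell,t)/t$, and your inequality reads $e^t(1-e^{-t})^2h(\ell,t)<\sum_{j=0}^{\ell}e^{jt}$. But $h(\ell,t)\sim e^{(\ell-1)t}$ as $t\to\infty$, so $K_\ell$ does not decay exponentially. Both sides of the inequality are $e^{\ell t}(1+o(1))$, so it is tight to leading order at infinity. Consequently:
\begin{itemize}
\item no crude bound on $|K_\ell|$ can establish it;
\item the sign of $h$, which is all that Theorem~\ref{thm:column-log-differences} provides, is not enough;
\item a finite numerical check in $t$ cannot certify an inequality that is asymptotically sharp, let alone uniformly in $\ell$.
\end{itemize}

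The missing idea is to split off the binomial factor before differencing, writing $d_\ell(m)=\binom m\ell e_\ell(m)$. At the kernel level this is $h(\ell,t)=\frac{e^{\ell t}-1}{e^t-1}-k(\ell,t)$ with $k(\ell,t)>0$ by Lemma~\ref{lem:coefficient-kernel}. The entire wrong-signed part then comes from $\binom m\ell$ and is explicit. Using $e^t(1-e^{-t})^2=2\cosh t-2$, one gets exactly
\[
 \sum_{j=0}^{\ell}e^{jt}-e^{t}(1-e^{-t})^2h(\ell,t)
 =\sum_{j=0}^{\ell-1}e^{jt}+e^{(\ell-1)t}+\bigl(1-e^{-t}\bigr)+(2\cosh t-2)\,k(\ell,t)>0.
\]
The leading $e^{\ell t}$ cancels identically, and positivity holds for every $\ell$ without any estimates.

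This is precisely the paper's route. It works with $H(\ell,y)$ and the identity $\frac{y-\ell+1}{y-\ell}\cdot\frac{\binom{y-1}{\ell}\binom{y+1}{\ell}}{{\binom{y}{\ell}}^{2}}=\frac{y+1}{y}$. Then $\log b_m$ splits as $\log H(\ell,m)$, whose kernel $\widehat k(\ell,t)$ is positive, plus $-\log(m-\ell+1)-\sum_{j<\ell}\log(m-j)$.

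Two smaller slips. First, the condition you need is that $\phi$, not $\phi'$, be a Bernstein function, i.e.\ $\phi'$ completely monotone; your L\'evy--Khintchine display states this correctly. Second, ``non-degenerate'' should be upgraded to ``unbounded support'', since that is what gives infinitely many support points. The paper proves this via the variance argument in Lemma~\ref{lem:bernstein-moments}.
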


For every fixed pair of nonnegative integers $\ell$ and $r$, we obtain
\[
 \lim_{m\to\infty}\frac{\Delta_m^r d_\ell(m)}{d_\ell(m)}=1.
\]
Since $d_\ell(m+1)/d_\ell(m)$ tends to $2$, we also have
\[
 \lim_{m\to\infty}\frac{\Delta_m^r d_\ell(m+1)}{\Delta_m^r d_\ell(m)}=2,
\]
which proves Jiang and Wang's Conjecture~4.4 \cite{JiangWang}.

Recall that a Hausdorff moment sequence has a representation
\[
 s_n=\int_0^1 t^n\,d\sigma(t),\qquad n\ge 0,
\]
where $\sigma$ is a finite positive measure on $[0,1]$. When $s_0=1$, the measure $\sigma$ is a probability measure. A probability Hausdorff moment sequence is infinitely divisible if $\{s_n^\tau\}_{n\ge 0}$ is again a probability Hausdorff moment sequence for every $\tau>0$ \cite[Sections~1 and~4]{BergDuran}.

We now turn to moment representations of the reciprocal coefficients. For every integer $\ell\ge 1$ and every $\tau>0$, we prove that the sequence $\{1/d_\ell(\ell+n)^\tau\}_{n\ge 0}$ is a Hausdorff moment sequence. Every Hausdorff moment sequence is a Stieltjes moment sequence and has a Hankel matrix with nonnegative minors \cite[Section~1]{WangZhu}.

Berg \cite[Theorem~6.2]{BergPowersII} established infinitely divisible Hausdorff moment representations for certain ratios of Gamma functions. To analyze the reciprocal coefficients, we extend the coefficient index to a real parameter using the generalized binomial coefficient
\[
 \binom{a}{b}=\frac{\Gamma(a+1)}{\Gamma(b+1)\Gamma(a-b+1)},\qquad a>-1,\quad b>-1,\quad a-b>-1.
\]
For $x\ge 0$ and $y>x-1/2$, define the positive integral interpolation
\begin{equation}\label{eq:bivariate-D}
 D(x,y)=\frac{2^{y+1}\Gamma(2y+3/2)}{\sqrt{\pi}\Gamma(y+1)^2}\binom{y}{x}\int_0^1\frac{t^{y+x}}{\sqrt{1-t}(1+t)^{2y+3/2}}\,dt.
\end{equation}
By the integral formula in \cite{XieZhangRows,XieZhangUnified}, we have $D(i,m)=d_i(m)$ for integers $0\le i\le m$. For real $\lambda>0$, let
\begin{equation}\label{eq:reciprocal-column}
 a_n(\lambda)=\frac{2^n D(\lambda,\lambda)}{D(\lambda,\lambda+n)},\qquad n\ge 0.
\end{equation}

A smooth function $f$ on an interval $I$ is completely monotone if
\[
 (-1)^j f^{(j)}(y)\ge 0,\qquad j\ge 0,\quad y\in I.
\]
If all inequalities are strict, $f$ is strictly completely monotone; see \cite[Chapter~1]{SchillingSongVondracek}. The next theorem determines the exact parameter range for the moment property and for complete monotonicity.

\begin{theorem}[Reciprocal moments and complete monotonicity]\label{thm:threshold-moments}
For every real $\lambda>0$, the sequence $\{a_n(\lambda)\}_{n\ge 0}$ is a probability Hausdorff moment sequence if and only if $\lambda\ge 1/4$. The function
\[
 -\frac{\partial^2}{\partial y^2}\log D(\lambda,y)
\]
is strictly completely monotone on $[\lambda,\infty)$ if and only if $\lambda\ge 1/4$. For every $\lambda\ge 1/4$ and $\tau>0$, there is a probability measure $\sigma_{\lambda,\tau}$ on $[0,1]$ with infinitely many support points in $(0,1]$ such that
\[
 a_n(\lambda)^\tau=\int_0^1 t^n\,d\sigma_{\lambda,\tau}(t),\qquad n\ge 0.
\]
In particular, $\{a_n(\lambda)\}_{n\ge 0}$ is infinitely divisible for every $\lambda\ge 1/4$.
\end{theorem}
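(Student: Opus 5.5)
The plan is to reduce everything to the sign of one Laplace density. Set
\[
 g_\lambda(y)=\log D(\lambda,\lambda+y)-y\log 2-\log D(\lambda,\lambda),\qquad y\ge 0,
\]
so that $a_n(\lambda)=e^{-g_\lambda(n)}$ and $g_\lambda(0)=0$. Write $\log D(\lambda,y)$ as the sum of three pieces. The first is the Gamma part $\log\Gamma(2y+3/2)-\log\Gamma(y+1)-\log\Gamma(y-\lambda+1)$, which remains after the binomial coefficient $\binom{y}{\lambda}$ is expanded. The second is a linear term $y\log 2$. The third is $\log I_\lambda(y)$, where
\[
 I_\lambda(y)=\int_0^1 t^{y+\lambda}(1-t)^{-1/2}(1+t)^{-2y-3/2}\,dt .
\]
For the Gamma part, Binet's formula gives $\psi'(z)=\int_0^\infty e^{-zs}\,s(1-e^{-s})^{-1}\,ds$. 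Hence the second derivative of the Gamma part is an explicit Laplace transform in $y$, with a density that is a combination of $e^{(\lambda-1)s}$, $e^{-s}$ and the rescaled term coming from $2y$.

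For $\log I_\lambda$, I would substitute $t=e^{-u}$ (or use the Laplace representation already available from \cite{XieZhangRows,XieZhangUnified}). This writes $I_\lambda(y)=\int_0^\infty e^{-y\,\phi(u)}w_\lambda(u)\,du$ with $\phi(u)=u+2\log(1+e^{-u})$. The expected output is a representation
\[
 -\partial_y^2\log D(\lambda,y)=\int_0^\infty e^{-ys}\rho_\lambda(s)\,ds ,
\]
with $\rho_\lambda$ explicit up to the contribution of $I_\lambda$. The main claim to establish is then: $\rho_\lambda\ge0$ and $\rho_\lambda\not\equiv0$ if and only if $\lambda\ge 1/4$. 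I expect the sign change to occur at the endpoint $s\to0^+$, where an expansion should give $\rho_\lambda(s)=c(4\lambda-1)s+O(s^2)$ with $c>0$. This ties to the large-$y$ expansion $-\partial_y^2\log D=\frac{\alpha(\lambda)}{y^2}+\frac{\beta(\lambda)}{y^3}+\cdots$, whose coefficients can be computed from Stirling's formula and Laplace's method at $t=1$. Showing $\rho_\lambda\ge0$ for all $s>0$ when $\lambda\ge1/4$, and not just near $0$, is the step I expect to be the main obstacle. I would try to write $\rho_\lambda=\rho_{1/4}+(\lambda-1/4)\kappa$ with $\kappa\ge0$ a monotone-in-$\lambda$ correction, and then prove $\rho_{1/4}\ge0$ by an explicit comparison of exponentials, falling back on numerics plus tail bounds on a compact $s$-range if needed.

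Granting this, sufficiency follows quickly. Strict complete monotonicity of $-g_\lambda''$ on $[0,\infty)$ follows from positivity of the Laplace density. Since $d_\lambda(m+1)/d_\lambda(m)\to2$ and the interpolation behaves the same way, $g_\lambda'(y)\to0$ as $y\to\infty$. Therefore $g_\lambda'(y)=\int_y^\infty(-g_\lambda'')$ is completely monotone, so $g_\lambda$ is a Bernstein function with $g_\lambda(0)=0$. By the L\'evy--Khintchine characterization of infinitely divisible Hausdorff moment sequences (as in \cite{BergDuran,BergPowersII}), $e^{-\tau g_\lambda(n)}$ is a probability Hausdorff moment sequence for every $\tau>0$. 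The representing measure $\sigma_{\lambda,\tau}$ cannot be finitely supported in $(0,1]$. If it were, $a_n^\tau$ would be a finite exponential sum, so $\log a_n$ would be asymptotically linear. This contradicts $-g_\lambda''(y)\sim\alpha/y^2$ with $\alpha\neq0$, or the strictness of complete monotonicity.

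For necessity with $\lambda<1/4$, I would argue directly on the sequence rather than on $g_\lambda$, since a moment property at $\tau=1$ does not a priori force the Bernstein property. The plan is to use the asymptotic expansion of $a_n(\lambda)$, namely $a_n\sim Cn^{-\gamma}(1+\delta/n+\cdots)$. The aim is to show that some high-order difference $(-1)^k\Delta^k a_n$ becomes negative for large $n$, because the leading correction coefficient is proportional to $4\lambda-1$. If this is awkward, I would try the weaker route via log-convexity: Hausdorff moment sequences are log-convex, so one would check whether $a_n^2>a_{n-1}a_{n+1}$ holds for large $n$, a condition governed by the sign of $\alpha(\lambda)$. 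The same computation gives the "only if" part of the complete monotonicity claim, because $\rho_\lambda<0$ near $0$ forces $(-1)^j\partial^j(-\partial^2\log D)<0$ for large $j$ at suitable $y$.
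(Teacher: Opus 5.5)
Your reduction has the right shape. In the paper's notation $\rho_\lambda(s)=s\,h(\lambda,s)$, where $h$ is the kernel of $\partial_z\Psi$ in \eqref{eq:column-first-derivative}. However, positivity of this density for every $\lambda\ge 1/4$ is the whole content of the sufficiency direction. You leave it open, and the fallback you describe cannot be carried out. The substitution $t=e^{-u}$ writes $I_\lambda$ itself, not $\log I_\lambda$, as a Laplace transform. That gives only $\partial_y^2\log I_\lambda\ge0$, which has the wrong sign and must be overcome by the Gamma part, and it produces no Laplace kernel for $\partial_y^2\log I_\lambda$. So the $I_\lambda$-contribution to $\rho_\lambda$ is not explicit. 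It also does not depend affinely on $\lambda$, so there is no reason to expect a splitting $\rho_{1/4}+(\lambda-1/4)\kappa$. Numerics on a compact $s$-range cannot cover the continuum $\lambda\in[1/4,\infty)$ either.

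The missing idea is the paper's. Write $M(\lambda,y)=2^{-2y-2}B(1/4,y+\lambda+1)Q(\lambda,y+\lambda+1)$, as behind \eqref{eq:factorization}, so that every factor except $Q$ is a Gamma ratio with an explicit digamma kernel. Then use the result from \cite{XieZhangUnified} that $-\log Q(\lambda,\cdot)$ is the Laplace transform of a density with $0<t\eta(\lambda,t)<(1-e^{-t/4})/(1-e^{-t})$; see \eqref{eq:logQ-density-bound}. This gives $k(\lambda,t)<k(0,t)=(e^{t/4}-1)/(e^t-1)$ uniformly in $\lambda>0$, where $k$ is the kernel in Lemma~\ref{lem:coefficient-kernel}. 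The binomial factor contributes $(e^{\lambda t}-1)/(e^t-1)$, which increases in $\lambda$ and equals $k(0,t)$ exactly at $\lambda=1/4$. Hence $h(\lambda,t)>0$ for all $\lambda\ge1/4$ at once. Working with the first derivative turns your two-parameter sign problem into a one-line comparison.

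Three smaller points.

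\emph{Support points.} Your argument for infinitely many support points uses $\alpha\neq0$. By your own expansion, $\alpha$ is a positive multiple of $4\lambda-1$, so the argument fails precisely at $\lambda=1/4$. Positivity of $-g_\lambda''$ alone does not exclude a finite exponential sum, because for such a sum $-\tau g_\lambda''$ is a variance. You can use the argument of Lemma~\ref{lem:bernstein-moments}: if the measure $\mu_\tau$ on $[0,\infty)$ with Laplace transform $e^{-\tau g_\lambda}$ had support in $[0,B]$, then $\mu_\tau=\mu_{\tau/N}^{*N}$ forces variance at most $B^2/N\to0$. Then $g_\lambda$ would be linear, contradicting $g_\lambda''<0$. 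Alternatively, say explicitly that a finite sum makes $-g_\lambda''$ decay exponentially, which is incompatible with $\rho_\lambda>0$ on every interval $(0,\epsilon)$.

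\emph{Necessity of the moment property.} You need no difference or log-convexity asymptotics here. By \eqref{eq:column-growth}, $a_n(\lambda)\to\infty$ when $\lambda<1/4$, whereas a probability Hausdorff moment sequence is bounded by $a_0=1$. Your log-convexity route is valid, but it needs second-derivative asymptotics and hence control of the non-explicit $Q$-part.

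\emph{Failure of complete monotonicity for $\lambda<1/4$.} This is detected by $j=0$ with $y\to\infty$, not by large $j$. The weight $s^je^{-ys}$ concentrates near $s=j/y$, so large $j$ at fixed $y$ moves it away from $s=0$.
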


It follows from Theorem~\ref{thm:threshold-moments} that, for every integer $\ell\ge 1$ and every $\tau>0$, the Hankel matrix
\[
 \left[\frac{1}{d_\ell(\ell+i+j)^\tau}\right]_{i,j\ge 0}
\]
is strictly totally positive.

The integral interpolation comes from \cite{XieZhangRows,XieZhangUnified}. Its factorization, the positive Laplace representation for the negative logarithm of the weighted average, and the accompanying density bound were established in \cite{XieZhangUnified}. We also use their kernel for normalized ratios. For fixed degree, we prove sharp bounds for the third and fourth logarithmic derivatives of the coefficient integral and deduce the ratio inequalities. For a real coefficient index, we compare the terms in the Laplace kernel to determine the exact threshold $1/4$. From the resulting representation, we derive infinite divisibility, strict total positivity and the logarithmic difference inequalities.

The paper is organized as follows. Section~2 contains the integral representations and finite difference identities used in the proofs. In Section~3, we prove the sharp cumulant bounds and use them to establish the ratio inequalities for fixed degree. In Section~4, we determine the exact parameter range for complete monotonicity and reciprocal moment representations, and prove strict positivity of all finite Hankel minors. In Section~5, we derive the logarithmic difference inequalities and the asymptotic formulas for forward differences of the coefficients. Finally, in Section~6, we prove strict infinite log-monotonicity and infinitely divisible moment representations for the coefficient ratios after binomial normalization.

\section{Integral representations and finite differences}\label{sec:moments}

In this section, we collect the integral representations and finite-difference identities used in the proofs. For fixed $m$, we work directly with the coefficient integral. For fixed $i$, we use a factorization involving binomial coefficients and a weighted average of rising factorial ratios. The finite-difference identities introduced below allow us to relate the interpolation to the coefficient sequences.

For real $x,y$ with $x+y>-1$, define
\begin{equation}\label{eq:moment}
 M(x,y)=\int_0^1\frac{t^{y+x}}{\sqrt{1-t}\,(1+t)^{2y+3/2}}\,dt.
\end{equation}
The integral converges and is positive. With
\[
 C(y)=\frac{2^{y+1}\Gamma(2y+3/2)}{\sqrt{\pi}\,\Gamma(y+1)^2},\qquad y>-\frac{1}{2},
\]
we can write
\[
 D(x,y)=C(y)\binom yx M(x,y),\qquad x\ge 0,\quad y>x-\frac{1}{2}.
\]
The factor $C(y)$ and the binomial coefficient are positive on this domain.

The formula extends smoothly to an open neighborhood of every point of the domain, including the boundary case $x=0$, since derivatives of $M(x,y)$ introduce powers of logarithms with integrable bounds on compact subsets of $x+y>-1$. Thus, the derivatives at the endpoints of the intervals below are ordinary derivatives.

For integer $m\ge 0$, the moment formula in \cite[Lemma 3.1]{XieZhangRows} and \cite[Lemma 3.2]{XieZhangUnified} reads
\begin{equation}\label{eq:representation}
 D(i,m)=d_i(m),\qquad e_i(m):=\frac{d_i(m)}{\binom mi}=C(m)M(i,m),\qquad 0\le i\le m.
\end{equation}
Substituting $u=(1-t)/(1+t)$ in \eqref{eq:moment} yields
\begin{equation}\label{eq:moment-t}
 M(x,y)=2^{-2y-1}\int_0^1 u^{-1/2}(1-u)^{y+x}(1+u)^{y-x}\,du.
\end{equation}
At $(x,y)=(i,m)$, formula \eqref{eq:moment-t} expresses $M(i,m)$ as $2^{-2m-1}$ times the integral in \cite[eq.~(3.5)]{XieZhangUnified}.

For $x=0$, the substitution $v=u^2$ reduces \eqref{eq:moment-t} to a beta integral. Using the duplication and reflection formulas for the Gamma function \cite[Eqs.~(5.5.3) and~(5.5.5)]{NISTDLMF}, we simplify \eqref{eq:bivariate-D} to
\begin{equation}\label{eq:zero-column}
 D(0,y)=\frac{2^y\Gamma(y+3/4)}{\Gamma(3/4)\Gamma(y+1)},\qquad y>-\frac{1}{2}.
\end{equation}

To study $D(x,y)$ as a function of $y$, we use the factorization from \cite[Section~4.1, eqs.~(4.1)--(4.4)]{XieZhangUnified}. For $x>0$, define
\begin{equation}\label{eq:Q}
 Q(x,z)=\sum_{j\ge 0}p_j(x)\frac{(z)_j}{(z+1/4)_j},\qquad z\ge 0,
\end{equation}
where
\[
 p_j(x)=4^{-x-j}\frac{x}{x+j}\binom{2x+2j}{j},\qquad j\ge 0.
\]
Here $(z)_0=1$ and $(z)_j=z(z+1)\cdots(z+j-1)$ for $j\ge 1$. The weights $p_j(x)$ are positive and sum to one. For $z>0$, $Q(x,z)$ also has the integral representation
\[
 Q(x,z)=\frac{1}{B(1/4,z)}\int_0^1 v^{-3/4}(1-v)^{z-1}(1+\sqrt v)^{-2x}\,dv,
\]
where $B$ is the beta function, $B(a,b)=\Gamma(a)\Gamma(b)/\Gamma(a+b)$.

Substituting $v=u^2$ in \eqref{eq:moment-t} gives
\[
 M(x,y)=2^{-2y-2}B(1/4,y+x+1)Q(x,y+x+1).
\]
Applying the Gamma duplication and reflection formulas once more, we obtain, for $x>0$ and $y>x-1/2$,
\begin{equation}\label{eq:factorization}
 D(x,y)=D(0,y)\binom yx \frac{\binom{y+x}{x}}{\binom{y+x+1/4}{x}}Q(x,y+x+1).
\end{equation}

By \cite[Lemma~4.2]{XieZhangUnified}, for each fixed $x>0$, $-\log Q(x,z)$ is a Laplace transform in $z$. There is a positive measure $d\nu_x(t)=\eta(x,t)\,dt$ such that
\begin{gather}
 -\log Q(x,z)=\int_0^\infty e^{-zt}\,d\nu_x(t),\qquad z\ge 0, \label{eq:logQ-measure}\\
 0<t\eta(x,t)<\frac{1-e^{-t/4}}{1-e^{-t}}<1,\qquad t>0.\label{eq:logQ-density-bound}
\end{gather}
The formula at $z=0$ follows by monotone convergence, since $Q(x,z)\to p_0(x)=4^{-x}$ as $z\to0^+$. Evaluating at zero yields
\[
 \nu_x((0,\infty))=-\log Q(x,0)=x\log 4<\infty.
\]
We use this representation to estimate derivatives and finite differences of $\log D(x,y)$ with respect to $y$.

We conclude this section with the finite-difference identities that relate the interpolation to the coefficient sequences. For $h>0$, define the forward difference with respect to $x$ by
\[
 \Delta_{x,h} f(x)=f(x+h)-f(x),
\]
with all other arguments held fixed. When $h=1$, write $\Delta_x=\Delta_{x,1}$. If $f$ is $C^r$ on a neighborhood of $[x,x+rh]$, repeated application of the fundamental theorem of calculus gives
\begin{equation}\label{eq:difference-cube}
 \Delta_{x,h}^rf(x)=\int_{[0,h]^r}f^{(r)}(x+t_1+\cdots+t_r)\,dt_1\cdots dt_r,\qquad r\ge 1.
\end{equation}
In particular, if $f^{(r)}$ is strictly positive or strictly negative on $[x,x+rh]$, then $\Delta_{x,h}^rf(x)$ has the same strict sign.

For a finite positive measure $\nu$ on $[0,\infty)$, let
\[
 L(x)=\int_0^\infty e^{-xt}\,d\nu(t),\qquad x\ge 0.
\]
Taking finite differences under the integral sign yields
\begin{equation}\label{eq:laplace-difference}
 (-1)^r\Delta_{x,h}^rL(x)=\int_0^\infty e^{-xt}(1-e^{-ht})^r\,d\nu(t),\qquad x\ge 0,\quad r\ge 0.
\end{equation}
If $\nu((0,\infty))>0$, the integral is strictly positive for every $r\ge 1$.

\section{Ratios at fixed degree}\label{sec:horizontal}

In this section, we prove Theorems~\ref{thm:horizontal} and~\ref{thm:half}. We first prove strict log-concavity of $\{u_i(m)\}_{i=1}^{m-1}$ and then consider the consecutive ratios $q_i(m)$. Both proofs rely on estimates for the third and fourth derivatives of $\log M(s,m)$ with respect to $s$.

For $m\ge 4$ and $2\le i\le m-2$, the definitions of $u_i(m)$ and $D(s,m)$ yield
\begin{equation}\label{eq:fourth-difference-ratio}
 \log\frac{u_i(m)^2}{u_{i-1}(m)u_{i+1}(m)}=-\Delta_i^4\log D(i-2,m).
\end{equation}
Since $[i-2,i+2]\subseteq[0,m]$, identity \eqref{eq:difference-cube} shows that strict log-concavity of $\{u_i(m)\}_{i=1}^{m-1}$ follows from
\[
 \frac{\partial^4}{\partial s^4}\log D(s,m)<0,\qquad 0\le s\le m.
\]

Fix an integer $m\ge 1$ and write $F(s,m)=\log M(s,m)$. Taking logarithms in \eqref{eq:bivariate-D}, we obtain, for $0\le s\le m$,
\begin{equation}\label{eq:interpolation}
 \log D(s,m)=\log C(m)+\log\binom ms+F(s,m).
\end{equation}
Differentiating the Gamma formula for $\binom ms$ and writing $\psi_3=(\log\Gamma)^{(4)}$, we have
\begin{equation}\label{eq:fourth-derivative}
 \frac{\partial^4}{\partial s^4}\log D(s,m)=\partial_s^4 F(s,m)-\psi_3(s+1)-\psi_3(m-s+1).
\end{equation}
It suffices to bound $\partial_s^4 F(s,m)$ from above by the sum of the two polygamma terms. We estimate the third derivative at the same time, as it is needed for Theorem~\ref{thm:half}.

\subsection{Cumulant bounds}\label{sec:cumulants}

We express the third and fourth derivatives of $F(s,m)$ in terms of the cumulants of $-\log u$, and estimate these cumulants via integration by parts.

For $s\ge 0$, consider the probability density $w(u,s,m)/M(s,m)$ on $(0,1)$, where
\[
 w(u,s,m)=u^{m+s}(1-u)^{-1/2}(1+u)^{-2m-3/2}.
\]
Let $\E_{s,m}$ denote expectation with respect to this density. With $t=-\log u$, we define, for integrable $\phi$ and integers $j\ge 0$,
\[
 \E_{s,m}[\phi(t)]=\frac1{M(s,m)}\int_0^1 \phi(-\log u)w(u,s,m)\,du,\qquad \mu_j=\E_{s,m}[t^j].
\]
The moments $\mu_j=\mu_j(s,m)$ are finite and positive, and $\mu_0=1$. The cumulants $\kappa_j$ are defined by
\[
 \log\E_{s,m}[e^{zt}]=\sum_{j=1}^{\infty}\kappa_j\frac{z^j}{j!}.
\]
The first two cumulants are the mean and variance, and expanding through degree four yields
\[
 \kappa_3=\mu_3-3\mu_1\mu_2+2\mu_1^3,\qquad \kappa_4=\mu_4-4\mu_1\mu_3+12\mu_1^2\mu_2-6\mu_1^4-3\mu_2^2.
\]

Since $F(s,m)=\log M(s,m)$, the cumulants are directly related to its derivatives. For real $z<m+s+1$, we have
\[
 \log\E_{s,m}[e^{zt}]=\log\frac{M(s-z,m)}{M(s,m)}=F(s-z,m)-F(s,m).
\]
Taking the third and fourth derivatives with respect to $z$ at zero gives
\begin{equation}\label{eq:cumulant}
 \partial_s^3 F(s,m)=-\kappa_3,\qquad \partial_s^4 F(s,m)=\kappa_4.
\end{equation}

Under $u=e^{-t}$, the unnormalized measure becomes $\omega(t,s,m)\,dt$, where $\omega(t,s,m)=e^{-t}w(e^{-t},s,m)$. Its logarithmic derivative is
\[
 \partial_t\log\omega(t,s,m)=-\frac1{2t}-\rho(t,s,m),
\]
where
\[
 \rho(t,s,m)=s+\frac{e^t+1}{4(e^t-1)}-\frac1{2t}+\left(m+\frac34\right)\frac{e^t-1}{e^t+1}.
\]
Since $(e^t-1)^2>t^2e^t$ for $t>0$, we have $\partial_t\rho(t,s,m)>0$. The function $\rho(t,s,m)$ tends to $s$ as $t\to0^+$ and to $s+m+1$ as $t\to\infty$. Hence
\[
 s<\rho(t,s,m)<s+m+1,\qquad t>0.
\]

For any probability measure on a real interval, suppose that $g$ and $h$ have opposite monotonicity, $h\ge0$, and the expectations below are finite with $\E[h]>0$. Then
\begin{equation}\label{eq:monotone-weight}
 \frac{\E[gh]}{\E[h]}\le\E[g].
\end{equation}
If both functions are nondecreasing, the inequality is reversed. In either case, integrate $(g(u)-g(v))(h(u)-h(v))$ against the product measure and use its sign.

\begin{lemma}\label{lem:cumulant-bound}
For every integer $m\ge 1$ and every $s\ge 0$,
\begin{equation}\label{eq:cumulant-comparisons}
 0<\kappa_3\le\frac8{15}\mu_3,\qquad \kappa_4\le\frac{16}{35}\mu_4.
\end{equation}
\end{lemma}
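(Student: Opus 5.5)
The plan is to compare the law of $t=-\log u$ under $\E_{s,m}$ with the Gamma law of shape $1/2$, which is the extremal case. For a Gamma density $t^{-1/2}e^{-ct}$ one computes $\kappa_3/\mu_3=1/\tfrac{15}{8}=\tfrac8{15}$ and $\kappa_4/\mu_4=3/\tfrac{105}{16}=\tfrac{16}{35}$, independently of $c$. The constants in \eqref{eq:cumulant-comparisons} are therefore exactly the Gamma$(1/2)$ values. The monotonicity of $\rho$ should push the actual distribution to the favourable side.

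\textbf{Step 1: a moment recursion.} Integrate by parts against $\omega$, using $\partial_t\log\omega=-\frac1{2t}-\rho$. For $\phi(t)=t^{k}$ the boundary terms vanish: at $0$ because $t^{k+1}\omega(t)\sim t^{k+1/2}$, and at $\infty$ because $\omega$ decays exponentially. This gives
\[
 \E_{s,m}\bigl[\rho\, t^{k+1}\bigr]=\Bigl(k+\tfrac12\Bigr)\mu_k,\qquad k\ge0 .
\]
Write $R_k=\mu_k/\mu_{k-1}$. Then
\[
 \frac{(k+\tfrac12)}{R_{k+1}}=\frac{\E[\rho\,t^{k+1}]}{\E[t^{k+1}]}.
\]
Apply \eqref{eq:monotone-weight} with $g=\rho$ (strictly increasing in $t$) and $h=t$, with respect to the probability measure proportional to $t^{k}\omega$. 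Both functions are increasing, so the inequality is reversed, and the right-hand side is strictly increasing in $k$. Hence
\[
 R_{k+1}<\frac{2k+1}{2k-1}\,R_k,\qquad k\ge1 .
\]
In particular $R_2<3R_1$, $R_3<\tfrac53R_2$ and $R_4<\tfrac75R_3$. Lyapunov's inequality (log-convexity of $k\mapsto\mu_k$, again by Cauchy--Schwarz) gives the opposite bounds $R_1<R_2<R_3<R_4$, which are strict because the law is nondegenerate.

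\textbf{Step 2: the third cumulant.} Set $\alpha=R_1/R_2\in(\tfrac13,1)$ and $\beta=R_2/R_3\in(\tfrac35,1)$. Then
\[
 \frac{\kappa_3}{\mu_3}=1-3\frac{\mu_1\mu_2}{\mu_3}+2\frac{\mu_1^3}{\mu_3}=1-\alpha\beta(3-2\alpha).
\]
Since $3-2\alpha>0$, this expression is decreasing in $\beta$.

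For the upper bound, fix $\beta=\tfrac35$. The function $\alpha(3-2\alpha)$ on $[\tfrac13,1]$ is concave, so its minimum over the interval is at an endpoint. The values are $\tfrac79$ at $\alpha=\tfrac13$ and $1$ at $\alpha=1$, so the minimum is $\tfrac79$. Thus $\kappa_3/\mu_3\le 1-\tfrac35\cdot\tfrac79=\tfrac8{15}$.

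For positivity, $\alpha\beta(3-2\alpha)<\alpha(3-2\alpha)\le1$ because $\beta<1$. Hence $\kappa_3>0$.

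\textbf{Step 3: the fourth cumulant (main obstacle).} Add $\gamma=R_3/R_4\in(\tfrac57,1)$. Express $\kappa_4/\mu_4$ as a polynomial in the three ratios:
\[
 \frac{\kappa_4}{\mu_4}=1-4\alpha\beta\gamma+12\alpha^2\beta\gamma-6\alpha^3\beta\gamma-3\beta\gamma\cdot\frac{\mu_2^2}{\mu_1\mu_3}\cdot\frac{\mu_1\mu_3}{\mu_2 R_4\,\mu_2}\cdots
\]
More cleanly, using $\mu_k=R_1\cdots R_k$, one gets $\mu_1\mu_3/\mu_4=\alpha\beta\gamma$, $\mu_1^2\mu_2/\mu_4=\alpha^3\beta^2\gamma\cdot(\text{appropriate powers})$, $\mu_2^2/\mu_4=\beta\gamma$, and so on. The task is to maximize this polynomial over the box $[\tfrac13,1]\times[\tfrac35,1]\times[\tfrac57,1]$ and show the maximum is $\tfrac{16}{35}$, attained at the Gamma corner $(\tfrac13,\tfrac35,\tfrac57)$.

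The function is affine in $\gamma$, since $\gamma$ enters only as an overall factor of the non-constant terms. So the first reduction is to check the sign of that coefficient and fix $\gamma$ at an endpoint. The analogous argument then handles $\beta$, leaving a one-variable cubic in $\alpha$ on $[\tfrac13,1]$ to be checked at endpoints and critical points.

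This is where I expect the real work, because the coefficients may change sign inside the box. If the box is too large for the bound to hold, the fallback is to sharpen the constraints. One option is to use the recursion of Step 1 with $g=\rho$ against the weight $(t-\mu_1)^2$, or similar centred weights, instead of pure powers. Another is to bound $\kappa_4$ directly by applying the integration-by-parts identity to the centred polynomial $\phi=(t-\mu_1)^3$, which yields $\kappa_4$ minus the Gamma value as a covariance of $\rho$ with a monotone function, nonpositive by \eqref{eq:monotone-weight}.
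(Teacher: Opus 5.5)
\textbf{Where this stands.} Your route to the upper bounds is the paper's. You use the same integration-by-parts identity, the same three ratio bounds from the reversed form of \eqref{eq:monotone-weight}, and the same Cauchy--Schwarz lower bounds. Your box is exactly the paper's region \eqref{eq:moment-region} under $U=1/\alpha$, $V=U^2/\beta$, and your proof of $\kappa_3\le\frac8{15}\mu_3$ is correct. There are two problems: the argument for $\kappa_3>0$ is wrong, and the fourth-cumulant bound is not proved.

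\textbf{Positivity of $\kappa_3$.} The step $\alpha(3-2\alpha)\le1$ is false, since $\alpha(3-2\alpha)-1=(2\alpha-1)(1-\alpha)$ is positive on $(\tfrac12,1)$, with maximum $\tfrac98$ at $\alpha=\tfrac34$. Every point of your box with $\alpha=\tfrac34$ and $\tfrac89<\beta<1$ gives $1-\alpha\beta(3-2\alpha)<0$. So no argument using only these ratio constraints can yield $\kappa_3>0$; equivalently, $V\ge U^2$ only gives $\kappa_3/\mu_1^3\ge(U-1)(U-2)$. The box records only the monotonicity of $\rho$, whereas positivity needs the sign information $\rho>s\ge0$. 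The paper gets this from your own recursion. Apply it with $k=1,2,3$ to the expansion of $t(t-c)^2\bigl(t\rho+\tfrac12\bigr)$; this gives $4\mu_3-6c\mu_2+2c^2\mu_1$. Taking $c=2\mu_1$ yields $4\kappa_3=\E_{s,m}\bigl[t(t-2\mu_1)^2\bigl(t\rho+\tfrac12\bigr)\bigr]$, whose integrand is positive except at $t=2\mu_1$.

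\textbf{The fourth cumulant.} Step 3 is not carried out, and two of your monomials are wrong: $\mu_1^2\mu_2/\mu_4=\alpha^2\beta^2\gamma$ and $\mu_2^2/\mu_4=\alpha\beta^2\gamma$ (not $\beta\gamma$). With the correct values,
\[
\frac{\kappa_4}{\mu_4}=1-\gamma P,\qquad P=4\alpha\beta+3\alpha\beta^2(1-4\alpha+2\alpha^2).
\]
The box does suffice, so no fallback is needed.
\begin{itemize}
\item Since $P=\alpha\beta\bigl[4+3\beta(1-4\alpha+2\alpha^2)\bigr]>0$, the worst case is $\gamma=\tfrac57$. This is the paper's substitution $\mu_4\le7\mu_3^2/(5\mu_2)$, and it reduces the claim to $P\ge\tfrac{19}{25}$.
\item Since $1-4\alpha+2\alpha^2<0$ on $[\tfrac13,1]$, $P$ is concave in $\beta$, so only $\beta=\tfrac35$ and $\beta=1$ need checking.
\item At $\beta=\tfrac35$ one gets $25P-19=(3\alpha-1)(18\alpha^2-30\alpha+19)\ge0$.
\item At $\beta=1$, write $\alpha=1-w$ with $0\le w\le\tfrac23$. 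Then $P=1-w+6w^2(1-w)\ge1-w+2w^2\ge\tfrac78>\tfrac{19}{25}$.
\end{itemize}
These are the paper's two endpoint checks $V=\tfrac53U^2$ and $V=U^2$ for $T(U,V)$, written in your coordinates.
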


\begin{proof}
Write $\mu=\mu_1$. For each integer $j\ge0$, the logarithmic
derivative of $\omega(t,s,m)$ with respect to $t$ implies
\[
 \partial_t\bigl[t^{j+1}\omega(t,s,m)\bigr]
 =\bigl[(j+\tfrac12)t^j-t^{j+1}\rho(t,s,m)\bigr]\omega(t,s,m).
\]
The boundary term $t^{j+1}\omega(t,s,m)$ is
$O(t^{j+1/2})$ at zero and decays exponentially at infinity.
The bound on $\rho(t,s,m)$ also ensures absolute integrability of
the terms on the right. Integration by parts thus gives
\begin{equation}\label{eq:row-ibp}
 \E_{s,m}\bigl[t^{j+1}\rho(t,s,m)\bigr]
       =\left(j+\frac12\right)\mu_j,\qquad j\ge0.
\end{equation}
To prove that $\kappa_3>0$, we express $4\kappa_3$ as the
expectation of a nonnegative function. For any real $c$, we
expand $t(t-c)^2$ and apply \eqref{eq:row-ibp} with $j=1,2,3$
to obtain
\[
 \E_{s,m}\bigl[t(t-c)^2\bigl(t\rho(t,s,m)+1/2\bigr)\bigr]
 =4\mu_3-6c\mu_2+2c^2\mu.
\]
For $c=2\mu$, the expression on the right is $4\kappa_3$, and hence
\[
 4\kappa_3=\E_{s,m}\bigl[
 t(t-2\mu)^2\bigl(t\rho(t,s,m)+1/2\bigr)\bigr]>0.
\]
Here $\rho(t,s,m)>s\ge0$, and the integrand vanishes only at
$t=2\mu$ in $(0,\infty)$. The expectation is strictly positive
because the density is positive throughout this interval.

For the upper bounds, fix an integer $r\ge1$ and use the probability
measure proportional to $t^r\omega(t,s,m)\,dt$. Since $t$ and
$\rho(t,s,m)$ are both increasing, the reversed inequality in
\eqref{eq:monotone-weight} gives
\[
 \mu_{r+1}\E_{s,m}[t^r\rho(t,s,m)]
 \le \mu_r\E_{s,m}[t^{r+1}\rho(t,s,m)].
\]
By \eqref{eq:row-ibp}, the two expectations are
$(r-\tfrac12)\mu_{r-1}$ and $(r+\tfrac12)\mu_r$. In particular,
\begin{equation}\label{eq:normalized-moment-bounds}
 \mu_2\le3\mu^2,\qquad
 \mu_3\le\frac{5\mu_2^2}{3\mu},\qquad
 \mu_4\le\frac{7\mu_3^2}{5\mu_2}.
\end{equation}

Define the normalized moments
\[
 U=\frac{\mu_2}{\mu^2},\qquad V=\frac{\mu_3}{\mu^3}.
\]
Together with $\mu_2\ge\mu^2$ and the Cauchy--Schwarz inequality
$\mu_2^2\le\mu\mu_3$, the preceding bounds imply that
\begin{equation}\label{eq:moment-region}
 1\le U\le3,\qquad U^2\le V\le\frac53U^2.
\end{equation}
Consequently,
\[
 \frac{\kappa_3-\frac8{15}\mu_3}{\mu^3}
 =\frac7{15}V-3U+2
 \le\frac{(U-3)(7U-6)}9\le0.
\]

For the fourth cumulant, substitution of the last bound in
\eqref{eq:normalized-moment-bounds} into its defining formula yields
\[
 \frac{\kappa_4-\frac{16}{35}\mu_4}{\mu^4}
 \le T(U,V),\qquad
 T(U,V)=\frac{19V^2}{25U}-4V+12U-6-3U^2.
\]
For fixed $U\in[1,3]$, this quadratic is convex in $V$, so we need
only check the two endpoints of the interval in
\eqref{eq:moment-region}. At the upper endpoint,
\[
 T\left(U,\frac53U^2\right)
 =\frac{(U-3)(19U^2-30U+18)}9\le0.
\]
The quadratic factor is positive, since
\[
 19U^2-30U+18=19(U-1)^2+8(U-1)+7>0.
\]
At the lower endpoint, write $U=1+z$ with $0\le z\le2$. Then
\begin{align*}
 25T(U,U^2)
 &=19z^3-118z^2+7z-6\\
 &\le-80z^2+7z-6
 =-(4z-1)^2-64z^2-z-5<0.
\end{align*}
\end{proof}

The constants $8/15$ and $16/35$ are optimal over $s\ge0$ for each fixed integer $m\ge1$. For $s>0$, \eqref{eq:row-ibp} and the bounds on $\rho$ imply
\[
 s\mu_{j+1}<\left(j+\frac12\right)\mu_j
 <(s+m+1)\mu_{j+1},\qquad j\ge0.
\]
Starting from $\mu_0=1$, we obtain
\begin{equation}\label{eq:moment-sandwich}
 \frac{(1/2)_j}{(s+m+1)^j}<\mu_j(s,m)<\frac{(1/2)_j}{s^j},\qquad j\ge1,
\end{equation}
where $(1/2)_j=\Gamma(j+1/2)/\Gamma(1/2)$. For fixed $m$, $s^j\mu_j(s,m)\to(1/2)_j$ as $s\to\infty$. Substituting these limits into the cumulant formulas gives $s^3\kappa_3\to1$ and $s^4\kappa_4\to3$, so the ratios converge to $8/15$ and $16/35$.

\subsection{The fourth derivative}\label{sec:fourth-derivative}

We now prove that $\partial_s^4\log D(s,m)<0$ throughout $0\le s\le m$. The cumulant bound and \eqref{eq:moment-sandwich} control $\partial_s^4 F(s,m)$ away from zero. Near zero, we return to the beta integral.

\begin{proposition}\label{prop:moment-bounds}
For every integer $m\ge2$ and every $s\ge0$, let $a=m+s$. Then
\begin{equation}\label{eq:beta-moment-bounds}
 \mu_2(s,m)<\frac1a,\qquad
 \mu_4(s,m)<\frac{5(a-1/4)}{a(a-1)(a+5/4)}.
\end{equation}
In particular,
\[
 \mu_4(s,m)<\frac5{m(m-1)}.
\]
If $a\ge5/2$, we also have
\begin{equation}\label{eq:joint-fourth-moment}
 \mu_4(s,m)<\frac5{a^2}.
\end{equation}
For $m\ge1$ and $s>0$,
\[
 \mu_3(s,m)<\frac{15}{8s^3},\qquad \mu_4(s,m)<\frac{105}{16s^4}.
\]
\end{proposition}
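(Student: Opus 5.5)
The final pair of bounds, for $m\ge1$ and $s>0$, follows directly from the moment sandwich \eqref{eq:moment-sandwich}. Its upper half gives $\mu_j(s,m)<(1/2)_j/s^j$, and $(1/2)_3=15/8$, $(1/2)_4=105/16$. So I would dispose of these first. The inequality $\mu_4<5/(m(m-1))$ follows from the $a$-bound \eqref{eq:beta-moment-bounds} once I check that $5(a-1/4)/(a(a-1)(a+5/4))$ is decreasing in $a>1$ and equals at most $5/(m(m-1))$ at $a=m$. Comparing numerators, this amounts to $(m-1/4)(m-1)\le(m-1)(m+5/4)$, which is clear. The bound \eqref{eq:joint-fourth-moment} should likewise follow from \eqref{eq:beta-moment-bounds}: for $a\ge5/2$, clearing denominators reduces $5(a-1/4)/(a(a-1)(a+5/4))<5/a^2$ to $a(a-1/4)<(a-1)(a+5/4)=a^2+a/4-5/4$, i.e.\ $a/2>5/4$. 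So the whole content is \eqref{eq:beta-moment-bounds}.

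For \eqref{eq:beta-moment-bounds} I would compare $\E_{s,m}$ with an explicit beta law whose log-moments are polygamma expressions, using the covariance inequality \eqref{eq:monotone-weight}. The natural rewriting is
\[
 w(u,s,m)=u^{a}(1-u)^{-1/2}(1+u)^{-3/2}\Bigl(\frac{(1+u)^2}{4u}\Bigr)^{-m}(4u)^{-m}\cdot u^{m}\cdot u^{-m},
\]
that is, I would isolate a factor such as $\bigl(1-((1-u)/(1+u))^2\bigr)^{m}$. This factor is monotone in $t=-\log u$, and the remaining weight should become a beta-type density in a suitable variable, for instance $v=((1-u)/(1+u))^2$ via \eqref{eq:moment-t}. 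Because $t^2$ and $t^4$ are increasing in $t$, a weight decreasing in $t$ lowers these moments. That gives upper bounds by the base law's moments $\psi'(\cdot)+(\text{mean})^2$-type expressions, and these are then bounded by rational functions of $a$.

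If that comparison does not produce the exact rational bounds, I would instead work with the integration-by-parts identities \eqref{eq:row-ibp} for $j=0,1,2,3$. They give $\E[t\rho]=1/2$, $\E[t^2\rho]=\tfrac32\mu_1$, and so on. I would combine them with a sharper lower bound
\[
 \rho(t,s,m)\ \ge\ s+\Bigl(m+\tfrac34\Bigr)\tanh(t/2)+\Bigl(\tfrac14\coth(t/2)-\tfrac1{2t}\Bigr),
\]
and then a linear-in-$t$ minorant of the right-hand side over the relevant range. Plugging such a minorant $\rho\ge\alpha+\beta t$ into the identities yields linear recursive inequalities among $\mu_1,\dots,\mu_4$. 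These close up to rational bounds in $a$, which is where the factors $a-1$ and $a+5/4$ would be expected to appear. For $\mu_2<1/a$ the target is the $j=1$ identity together with $\E[t\rho]=1/2$ and Chebyshev's inequality.

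The main obstacle is exactly this step: producing bounds of the precise rational forms $1/a$ and $5(a-1/4)/(a(a-1)(a+5/4))$. The lower bound $\rho>s$ only sees $s$, not $a=m+s$, so the $m$-dependence must be extracted from the $(m+3/4)\tanh(t/2)$ term, which is small near $t=0$. I would try the beta-comparison route first, since it encodes $a$ exactly through the power $u^a$. I would fall back on the $\rho$-minorant route if the comparison loses the constant.
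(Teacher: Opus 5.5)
Your reductions of the secondary claims are fine. The $s$-bounds are the upper half of \eqref{eq:moment-sandwich}, and the bounds $5/(m(m-1))$ and $5/a^2$ follow from the main $a$-bound. One small slip: $a(a-1/4)<(a-1)(a+5/4)$ is an \emph{equality} at $a=5/2$. So you need $\le$ there, with strictness coming from \eqref{eq:beta-moment-bounds}. The real problem is that the proposal does not prove \eqref{eq:beta-moment-bounds}, which you yourself identify as the whole content. Neither of your two routes, as described, reaches it.

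In the beta-comparison route, your split goes the wrong way. In the variable $z=((1-u)/(1+u))^2$, the law under $\E_{s,m}$ has density proportional to $z^{-3/4}(1-z)^{a}(1+\sqrt z)^{-2s}$, and $t=2\operatorname{arctanh}\sqrt z$.
\begin{itemize}
\item If you discard $(1-z)^m=(4u/(1+u)^2)^m$ as the decreasing weight, what remains is $z^{-3/4}(1-z)^s(1+\sqrt z)^{-2s}$. This is not a beta density and contains no $m$. At $s=0$ it is Beta$(1/4,1)$, so you get an $m$-independent constant instead of $1/m$.
\item The weight to discard is $(1+\sqrt z)^{-2s}$, which is nonincreasing in $z$. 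This leaves the law Beta$(1/4,a+1)$, which carries all of $a$.
\item A beta law in $u$ does not rescue the polygamma idea. With base $u^a(1-u)^{-1/2}$ the log-moments are indeed polygamma expressions. But the leftover weight $(1+u)^{-2m-3/2}$ is increasing in $t$, so \eqref{eq:monotone-weight} gives lower bounds, not upper bounds.
\end{itemize}
Once the base law is Beta$(1/4,a+1)$ in $z$, its $t$-moments are not polygamma expressions. The missing idea is to majorize
\begin{itemize}
\item $t^2<4z/(1-z)$, and
\item $t^4<16z^2(1-2z/3)/(1-z)^2$,
\end{itemize}
using $\operatorname{arctanh}v<v(1-v^2)^{-1/3}$ and $(1-z)^{2/3}\le 1-2z/3$. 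The beta integrals of these majorants are exactly $4B(5/4,a)/B(1/4,a+1)=1/a$ and $16\bigl[B(9/4,a-1)-\tfrac23B(13/4,a-1)\bigr]/B(1/4,a+1)=5(a-1/4)/(a(a-1)(a+5/4))$. This is where the factors $a-1$ and $a+5/4$ come from; no polygamma-to-rational step is involved.

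Your fallback also does not close. Since $\rho<s+m+1$ is bounded, no minorant $\alpha+\beta t$ with $\beta>0$ holds on all of $(0,\infty)$. The identities \eqref{eq:row-ibp} integrate over all $t>0$, so a minorant valid only on a ``relevant range'' cannot be substituted into them. You would need a bounded minorant such as $s+(m+3/4)t/(t+2)$ together with a Jensen argument. As in Corollary~\ref{cor:third-derivative}, that mainly produces bounds in terms of $s$, not the $a$-forms required here.
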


\begin{proof}
We repeat the two substitutions used to obtain \eqref{eq:zero-column},
writing $v=(1-u)/(1+u)$ and $z=v^2$. Under the probability measure
defining $\E_{s,m}$, the density of $z$ is proportional to
\[
 z^{-3/4}(1-z)^a(1+\sqrt z)^{-2s},\qquad 0<z<1,
\]
and $t=-\log u=2\operatorname{arctanh}\sqrt z$.
The factor $(1+\sqrt z)^{-2s}$ is nonincreasing. By
\eqref{eq:monotone-weight}, the expectation of an increasing
function of $z$ is at most its expectation under the beta
distribution with parameters $1/4$ and $a+1$, whose density is
\[
 \frac{z^{-3/4}(1-z)^a}{B(1/4,a+1)},\qquad 0<z<1.
\]

We next bound the second and fourth powers of
$2\operatorname{arctanh}\sqrt z$. For $0<v<1$, concavity of
the cube root implies
\[
 \frac{d}{dv}\left(\frac{v}{(1-v^2)^{1/3}}
                   -\operatorname{arctanh}v\right)
 =\frac{1-v^2/3-(1-v^2)^{1/3}}{(1-v^2)^{4/3}}>0.
\]
The expression in parentheses vanishes at zero, so
$\operatorname{arctanh}v<v(1-v^2)^{-1/3}$.
Using also $(1-z)^{2/3}\le1-2z/3$, we obtain
\[
 (2\operatorname{arctanh}\sqrt z)^2<\frac{4z}{1-z},\qquad
 (2\operatorname{arctanh}\sqrt z)^4
 <\frac{16z^2(1-2z/3)}{(1-z)^2}.
\]
Both powers on the left are increasing. We first compare their
expectations with those under the beta distribution, and then
integrate the displayed bounds. Since $a\ge2$, all the beta
integrals converge, and
\begin{align*}
 \mu_2(s,m)&<\frac{4B(5/4,a)}{B(1/4,a+1)}=\frac1a,\\
 \mu_4(s,m)&<\frac{16\bigl[B(9/4,a-1)-\tfrac23B(13/4,a-1)\bigr]}
                     {B(1/4,a+1)}
 =\frac{5(a-1/4)}{a(a-1)(a+5/4)}.
\end{align*}
The bound $5/[m(m-1)]$ follows from $a\ge m$ and
$(a-1/4)/(a+5/4)<1$. For $a\ge5/2$, the identity
\[
 (a-1)(a+5/4)-a(a-1/4)=\frac a2-\frac54\ge0
\]
gives \eqref{eq:joint-fourth-moment}. The bounds in terms of $s$
follow directly from
\eqref{eq:moment-sandwich} with $j=3,4$.
\end{proof}

We now compare $\partial_s^4 F(s,m)$ with the polygamma terms in
\eqref{eq:fourth-derivative}.
For $u>0$, the function $(u+t)^{-4}$ is strictly convex in $t$
on $[0,\infty)$.
Summing the trapezoidal bounds over the unit intervals gives
\[
 \sum_{j\ge0}(u+j)^{-4}
 >\int_0^\infty(u+t)^{-4}\,dt+\frac1{2u^4}.
\]
Evaluating the integral, we obtain
\begin{equation}\label{eq:polygamma-bound}
 \psi_3(u)=6\sum_{j=0}^\infty\frac1{(u+j)^4}
 >\frac2{u^3}+\frac3{u^4},\qquad u>0.
\end{equation}

\begin{proposition}\label{prop:fourth-derivative}
For every integer $m\ge 4$,
\begin{equation}\label{eq:one-sided-fourth}
 \partial_s^4 F(s,m)<\psi_3(s+1),\qquad s\ge 0.
\end{equation}
Consequently, $\partial_s^4\log D(s,m)<0$ for $0\le s\le m$.
\end{proposition}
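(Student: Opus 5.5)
The plan is to compare the two sides of \eqref{eq:one-sided-fourth} using only bounds already established. By \eqref{eq:cumulant} we have $\partial_s^4F(s,m)=\kappa_4$, and Lemma~\ref{lem:cumulant-bound} gives $\kappa_4\le\frac{16}{35}\mu_4$. So it suffices to show $\frac{16}{35}\mu_4(s,m)<\psi_3(s+1)$. For the right side I will use the lower bound \eqref{eq:polygamma-bound}, $\psi_3(s+1)>2/(s+1)^3+3/(s+1)^4$, or exact numerical values of $\psi_3$ at finitely many points. For the left side I will use Proposition~\ref{prop:moment-bounds}. The range of $s$ splits into a large-$s$ regime and a bounded regime.

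\emph{Large $s$.} For $s>0$, Proposition~\ref{prop:moment-bounds} gives $\mu_4<105/(16s^4)$, hence $\kappa_4<3/s^4$. It then suffices that
\[
3(s+1)^4< s^4\bigl(2(s+1)+3\bigr)=s^4(2s+5).
\]
This is a polynomial inequality. It holds from roughly $s\ge 2.8$ on: at $s=2.8$ the two sides are $\approx625$ and $\approx652$, and the right side wins more and more for larger $s$, since its degree is higher. I would fix a convenient threshold, say $s_0=2.8$ or $3$, and verify the polynomial inequality on $[s_0,\infty)$. This regime is independent of $m$.

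\emph{Bounded $s\in[0,s_0]$.} Here I use the $m$-dependent bound from \eqref{eq:beta-moment-bounds} with $a=m+s\ge4$:
\[
\kappa_4\le \frac{16}{35}\cdot\frac{5(a-1/4)}{a(a-1)(a+5/4)}=:G(a).
\]
$G$ is decreasing in $a$, so the worst case is $m=4$, i.e.\ $a=4+s$. The cruder bound $16/(7a^2)$ from \eqref{eq:joint-fourth-moment} is not enough near $s=3$: at $m=4$, $s=3$ it gives $\approx0.0466$, while $\psi_3(4)\approx0.0448$. So I keep the sharper $G$.

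Both $G(4+s)$ and $\psi_3(s+1)$ are decreasing in $s$. I will therefore cover $[0,s_0]$ by a finite partition $s_0'=0<\dots<s_N=s_0$ and check, on each cell,
\[
G(4+s_{k})<\psi_3(s_{k+1}+1).
\]
The right side is evaluated by the series for $\psi_3$ or bounded below via \eqref{eq:polygamma-bound}. For example, at the top end $s=2.8$ one has $G(6.8)\approx0.047<\psi_3(3.8)$, whose lower bound from \eqref{eq:polygamma-bound} already exceeds $0.050$. Near $s=0$ the margin is large: $G(4)\approx0.29$ while $\psi_3(1)=6\zeta(4)\approx6.5$. So only a few cells should be needed, concentrated near the junction with the large-$s$ regime.

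The main obstacle is this junction region $s\approx 2.5$--$3$ with $m=4$, where both available bounds are nearly tight. If the margins there are too thin, I would first refine the partition and use exact values of $\psi_3$ rather than \eqref{eq:polygamma-bound}. Failing that, I would interpolate between the two estimates for $\mu_4$ (the $s$-bound and the $a$-bound).

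Finally, the consequence follows from \eqref{eq:fourth-derivative}: for $0\le s\le m$,
\[
\partial_s^4\log D(s,m)=\kappa_4-\psi_3(s+1)-\psi_3(m-s+1)<-\psi_3(m-s+1)<0,
\]
since $\psi_3>0$ on $(0,\infty)$.
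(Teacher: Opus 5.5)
Your proposal is correct and follows the paper's own route: $\kappa_4\le\frac{16}{35}\mu_4$, the $m$-dependent moment bound for small $s$, the bound $3/s^4$ for large $s$, and comparison with $R(s)=2/(s+1)^3+3/(s+1)^4$. The only difference is on the bounded range, where the paper keeps the cruder bound $16/(7(s+4)^2)$ and switches at $c=11/4$; the monotonicity of $(s+4)^2R(s)$ and $s^4R(s)$ then reduces everything to a single check at $c$, so neither your partition nor the sharper $G$ is needed (your objection to the cruder bound applies only if you switch at $s=3$, and note $G(4)\approx0.136$, since $0.29$ is the $\mu_4$ bound before the factor $16/35$).
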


\begin{proof}
By Lemma~\ref{lem:cumulant-bound} and Proposition~\ref{prop:moment-bounds},
\begin{align*}
 \partial_s^4 F(s,m)&<\frac{16}{7(m+s)^2}\le\frac{16}{7(s+4)^2},
 &&s\ge0,\\
 \partial_s^4 F(s,m)&<\frac3{s^4},&&s>0.
\end{align*}
We compare these bounds with
\[
 R(s)=\frac{2s+5}{(s+1)^4}
     =\frac2{(s+1)^3}+\frac3{(s+1)^4}.
\]
Direct differentiation gives
\begin{align*}
 \frac{d}{ds}\bigl[(s+4)^2R(s)\bigr]
 &=-\frac{2(s+4)(s^2+14s+31)}{(s+1)^5}<0,\\
 \frac{d}{ds}\bigl[s^4R(s)\bigr]
 &=\frac{2s^3(s^2+5s+10)}{(s+1)^5}>0\qquad(s>0).
\end{align*}
At $c=11/4$, the two products satisfy
\[
 (c+4)^2R(c)=\frac{1512}{625}>\frac{16}7,
 \qquad c^4R(c)=\frac{102487}{33750}>3.
\]
It follows that
\begin{align*}
 \partial_s^4 F(s,m)&<\frac{16}{7(s+4)^2}<R(s),&&0\le s\le c,\\
 \partial_s^4 F(s,m)&<\frac3{s^4}<R(s),&&s\ge c.
\end{align*}
Since $R(s)<\psi_3(s+1)$ by \eqref{eq:polygamma-bound},
the estimate \eqref{eq:one-sided-fourth} holds for all $s\ge0$.
On $[0,m]$, the remaining term $\psi_3(m-s+1)$ in
\eqref{eq:fourth-derivative} is positive, so the fourth derivative
of $\log D(s,m)$ is strictly negative.
\end{proof}

\begin{proof}[\normalfont\bfseries Proof of Theorem~\ref{thm:horizontal}]
For $2\le i\le m-2$, Proposition~\ref{prop:fourth-derivative} and \eqref{eq:difference-cube} show that the right-hand side of \eqref{eq:fourth-difference-ratio} is positive.
\end{proof}

\begin{corollary}\label{cor:arbitrary-step}
Let $m\ge4$ be an integer and $0<h<m/2$. Then
\[
 U(s,m,h)=\frac{D(s-h,m)D(s+h,m)}{D(s,m)^2},
 \qquad h\le s\le m-h,
\]
is strictly log-concave as a function of $s$. Its values along any arithmetic progression
of positive step contained in $[h,m-h]$ form a strictly log-concave
sequence at the interior indices.
\end{corollary}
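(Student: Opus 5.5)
The plan is to reduce everything to the sign of the fourth derivative supplied by Proposition~\ref{prop:fourth-derivative}. Set $g(s)=\log D(s,m)$, which is smooth on a neighborhood of $[0,m]$ (as noted in Section~\ref{sec:moments}), and let
\[
 \log U(s,m,h)=g(s-h)+g(s+h)-2g(s)=:G(s),\qquad h\le s\le m-h.
\]
The condition $0<h<m/2$ is exactly what makes this interval nondegenerate, and for every $s$ in it the three points $s-h,s,s+h$ lie in $[0,m]$.

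First I would show that $G''(s)<0$ on $[h,m-h]$. Differentiating twice under the definition gives $G''(s)=g''(s-h)+g''(s+h)-2g''(s)$. This is a central second difference of $g''$ with step $h$. Applying \eqref{eq:difference-cube} with $r=2$ to $f=g''$ at the point $s-h$ gives
\[
 G''(s)=\int_{[0,h]^2} g^{(4)}(s-h+t_1+t_2)\,dt_1\,dt_2 .
\]
The arguments $s-h+t_1+t_2$ range over $[s-h,s+h]\subseteq[0,m]$. By Proposition~\ref{prop:fourth-derivative} the integrand is strictly negative there, so $G''(s)<0$, and $\log U$ is strictly concave in $s$.

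Next I would pass to sequences. Take $s_k=s_0+k\delta$ with $\delta>0$, all points in $[h,m-h]$. At an interior index $k$, we have $2G(s_k)-G(s_{k-1})-G(s_{k+1})=-\Delta^2_{s,\delta}G(s_{k-1})$. Using \eqref{eq:difference-cube} again with $r=2$ and $f=G$, this equals
\[
 -\int_{[0,\delta]^2}G''(s_{k-1}+t_1+t_2)\,dt_1\,dt_2>0,
\]
because the arguments stay in $[s_{k-1},s_{k+1}]\subseteq[h,m-h]$. Exponentiating gives $U(s_k)^2>U(s_{k-1})U(s_{k+1})$, which is the claimed strict log-concavity along the progression.

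There is no real obstacle here. The only point needing care is the endpoint behavior: when $s=h$ or $s=m-h$ the arguments reach $0$ or $m$. This is covered because Proposition~\ref{prop:fourth-derivative} holds on the closed interval $[0,m]$ and $g$ is smooth on a neighborhood of it, so \eqref{eq:difference-cube} applies with ordinary derivatives.
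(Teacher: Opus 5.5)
Your proof is correct and follows the paper's argument. You write $\partial_s^2\log U(s,m,h)$ as a step-$h$ second difference of $\partial_s^2\log D(\cdot,m)$, express it via \eqref{eq:difference-cube} as an integral of the fourth derivative over $[s-h,s+h]\subseteq[0,m]$, invoke Proposition~\ref{prop:fourth-derivative}, and then pass to arithmetic progressions by a second application of \eqref{eq:difference-cube}, which is equivalent to the paper's appeal to the strict midpoint inequality.
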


\begin{proof}
For $h\le s\le m-h$, the same fourth derivative estimate gives
\[
 \partial_s^2\log U(s,m,h)=\Delta_{s,h}^2\partial_s^2\log D(s-h,m)<0
\]
by \eqref{eq:difference-cube}. Thus $\log U(s,m,h)$ is strictly
concave in $s$ on $[h,m-h]$. For any three consecutive points of an
arithmetic progression in this interval, the strict midpoint
inequality gives the required strict log-concavity.
\end{proof}

\subsection{Ratios of consecutive coefficients}\label{sec:half}

To locate the change from strict log-convexity to strict log-concavity of $\{q_i(m)\}_{i=1}^m$, we use the factorization
\[
 q_i(m)=\frac{m-i+1}{i}\frac{M(i,m)}{M(i-1,m)}.
\]
The second difference of $\log(M(i,m)/M(i-1,m))$ centered at $i$ is $\Delta_i^3F(i-2,m)$.

\begin{corollary}\label{cor:third-derivative}
For $m\ge1$, $\partial_s^3F(s,m)<0$ for $s\ge0$ and
\begin{equation}\label{eq:third-bound}
 -s^{-3}<\partial_s^3F(s,m)<0,\qquad s>0.
\end{equation}
For $m\ge4$, the stronger bound
\begin{equation}\label{eq:shifted-third-bound}
 -(s+1/2)^{-3}<\partial_s^3F(s,m)<0,\qquad 0\le s\le m-1,
\end{equation}
also holds.
\end{corollary}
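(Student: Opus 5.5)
By \eqref{eq:cumulant}, $\partial_s^3F(s,m)=-\kappa_3$. Lemma~\ref{lem:cumulant-bound} gives $0<\kappa_3\le\frac8{15}\mu_3$ for every $m\ge1$ and $s\ge0$, so the plan is to bound $\mu_3$ from above in each regime.

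The sign statement $\partial_s^3F(s,m)<0$ for all $s\ge0$ is exactly $\kappa_3>0$. For \eqref{eq:third-bound}, take $s>0$. Proposition~\ref{prop:moment-bounds} gives $\mu_3<15/(8s^3)$, which comes from \eqref{eq:moment-sandwich} with $j=3$. Hence
\[
 \kappa_3\le\tfrac8{15}\mu_3<s^{-3},
\]
and this is the claimed lower bound.

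For \eqref{eq:shifted-third-bound}, fix $m\ge4$ and $0\le s\le m-1$. It suffices to show $\frac8{15}\mu_3<(s+1/2)^{-3}$. I would split into two ranges of $s$, using a different moment bound in each.

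\emph{Large $s$.} For $s\ge s_0$, I would use the sandwich bound $\frac8{15}\mu_3<s^{-3}$ together with the cumulant comparison, sharpened where necessary. The difficulty is that $s^{-3}>(s+1/2)^{-3}$, so the sandwich bound alone is not enough. A better estimate of $\mu_3$ is needed that uses $m$. One option is to bound $\mu_3$ through Cauchy--Schwarz, $\mu_3\le\sqrt{\mu_2\mu_4}$, and then invoke Proposition~\ref{prop:moment-bounds}:
\[
 \mu_2<\frac1a,\qquad \mu_4<\frac5{a^2},\qquad a=m+s.
\]
This gives $\mu_3<\sqrt5\,a^{-3/2}$. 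Since $s\le m-1$, we have $a\ge 2s+1=2(s+1/2)$, and hence
\[
 \tfrac8{15}\mu_3<\tfrac{8\sqrt5}{15}\,2^{-3/2}(s+1/2)^{-3/2}.
\]
This is below $(s+1/2)^{-3}$ only when $s+1/2$ is small, so it handles the small-$s$ range instead.

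\emph{Small $s$.} In the complementary range, the comparison $(s+1/2)^{3/2}<\frac{15\cdot 2^{3/2}}{8\sqrt5}\approx2.37$ holds for $s+1/2\lesssim1.78$. Combined with the alternative bound from $a\ge m+s\ge s+4$, namely $\mu_3<\sqrt5\,(s+4)^{-3/2}$, this should cover an initial interval $[0,c]$. Throughout, $a\ge5/2$ holds because $m\ge4$, so \eqref{eq:joint-fourth-moment} applies.

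\emph{Main obstacle.} The real difficulty is large $s$, where the target $(s+1/2)^{-3}$ is asymptotically sharp against $\kappa_3\sim s^{-3}$. Here I would refine \eqref{eq:moment-sandwich} using the restriction $s\le m-1$, by sharpening the lower bound on $\rho$. Since $\rho$ is increasing in $t$ with $\rho\ge s+\rho_0(t)$ and a contribution of order $(m+3/4)t/2$ for small $t$, \eqref{eq:row-ibp} should yield an improvement of the form
\[
 \mu_{j+1}<\frac{(j+1/2)\mu_j}{s+\delta},
\]
with $\delta$ roughly $1/2$, coming from the $m$-term once $t$ is typically of order $1/s$ and $m\ge s+1$. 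Iterating three times then gives $\frac8{15}\mu_3<(s+\delta)^{-3}$. I would first attempt to prove $\E[t^{j+1}\rho]\ge(s+1/2)\mu_{j+1}$ directly, via \eqref{eq:monotone-weight} applied to $\rho-s$. Failing that, I would settle for $\delta$ slightly smaller and absorb the loss using the slack between $8/15$ and the true ratio $\kappa_3/\mu_3$ at moderate $s$.
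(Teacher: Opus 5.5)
\textbf{There is a genuine gap.} Your proof of the sign of $\partial_s^3F$ and of \eqref{eq:third-bound} is correct and coincides with the paper's argument. Your Cauchy--Schwarz estimate $\frac8{15}\mu_3<\frac{8\sqrt5}{15}a^{-3/2}$, with $a\ge\max(2s+1,s+4)$, is also valid. However, it only covers roughly $0\le s\lesssim1.7$. The rest of \eqref{eq:shifted-third-bound}, which is the real content of the corollary, stays at the level of a heuristic.

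Two things are missing. First, you use only the small-$t$ behaviour $\rho-s\approx(m/2+5/12)t$. The tilted expectations integrate over all $t>0$, so you need a pointwise lower bound valid globally. The paper uses $\coth(t/2)>2/t$ and $\tanh(t/2)>t/(t+2)$ to get $\rho(t,s,m)>s+(m+\frac34)\frac{t}{t+2}$. It then applies Jensen's inequality to the \emph{convex} function $t^2/(t+2)$ under the probability measure proportional to $t^j\omega\,dt$. Combined with \eqref{eq:row-ibp}, this gives $\alpha_j>sr_j+(m+\frac34)r_j^2/(r_j+2)$, where $r_j=\mu_{j+1}/\mu_j$ and $\alpha_j=j+\frac12$.

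Second, a uniform shift $\delta=\frac12$ is out of reach with these tools at $j=0$. Your monotone-weight route reduces $\E_{s,m}[t(\rho-s)]\ge\frac12\mu_1$ to $\E_{s,m}[\rho-s]\ge\frac12$. But $\E_{s,m}[\rho-s]$ is asymptotically $\frac m2\mu_1\sim\frac m{4s}$, which tends to $\frac14$ when $m=s+1\to\infty$. In addition, bounding $\E[\rho-s]$ from below would require Jensen for the concave function $t/(t+2)$, which goes the wrong way. Your fallback cannot close the gap either. As $s\to\infty$ one has $\kappa_3/\mu_3\to8/15$, and Lemma~\ref{lem:cumulant-bound} gives no quantitative slack below $8/15$. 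Meanwhile the target $(s+\frac12)^{-3}$ differs from $s^{-3}$ already at relative order $1/s$.

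The missing idea is to use $j$-dependent shifts whose product dominates $(s+\frac12)^3$. The right side $sr+(m+\frac34)r^2/(r+2)$ is increasing in $r$. So it suffices to check $(m+\frac34)\alpha_j>c_j(\alpha_j+2s+2c_j)$ for $(c_0,c_1,c_2)=(\frac14,\frac12,1)$. This holds whenever $m\ge4$ and $s\le m-1$, and it yields $r_j<\alpha_j/(s+c_j)$. Multiplying the three bounds gives
\[
\mu_3<\frac{15}{8(s+\frac14)(s+\frac12)(s+1)}\le\frac{15}{8(s+\frac12)^3},
\]
using $(s+\frac14)(s+1)\ge(s+\frac12)^2$. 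The bound $\kappa_3\le\frac8{15}\mu_3$ then finishes the proof. This argument works on all of $[0,m-1]$ at once, so your small-$s$ splitting is unnecessary.
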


\begin{proof}
Recall that $\partial_s^3 F(s,m)=-\kappa_3$.
Lemma~\ref{lem:cumulant-bound} gives $\kappa_3>0$ for all $s\ge0$.
For $s>0$, that lemma and
Proposition~\ref{prop:moment-bounds} also give
\[
 0<\kappa_3\le\frac8{15}\mu_3<\frac1{s^3}.
\]

To prove \eqref{eq:shifted-third-bound}, we retain part of the
dependence on $m$ in $\rho(t,s,m)$. For $t>0$, we have
\[
 \frac{e^t+1}{e^t-1}>\frac2t,
 \qquad
 \frac{e^t-1}{e^t+1}>\frac{t}{t+2}.
\]
The first inequality follows from $\tanh(t/2)<t/2$, and the second
from $e^t>1+t$. Consequently,
\[
 \rho(t,s,m)>s+\left(m+\frac34\right)\frac{t}{t+2}.
\]
For $j=0,1,2$, let $\alpha_j=j+1/2$ and
$r_j=\mu_{j+1}/\mu_j$. Under the probability measure proportional
to $t^j\omega(t,s,m)\,dt$, the mean of $t$ is $r_j$.
The function $t^2/(t+2)$ is strictly convex, since its second
derivative is $8/(t+2)^3$. Applying Jensen's inequality and
\eqref{eq:row-ibp}, we obtain
\[
 \alpha_j>s r_j+\left(m+\frac34\right)\frac{r_j^2}{r_j+2}.
\]

We compare $r_j$ with $\alpha_j/(s+c_j)$, where
$(c_0,c_1,c_2)=(1/4,1/2,1)$.
For $m\ge4$ and $s\le m-1$,
\[
 \alpha_j\left(m+\frac34\right)
       -c_j(\alpha_j+2s+2c_j)
 \ge
 \begin{cases}
 5/8,&j=0,\\
 m/2+7/8,&j=1,\\
 m/2-5/8,&j=2,
 \end{cases}
\]
so all three differences are positive. Substituting
$r=\alpha_j/(s+c_j)$ into
$sr+(m+3/4)r^2/(r+2)$ therefore gives a value greater than
$\alpha_j$. This expression is strictly increasing for $r>0$,
and hence $r_j<\alpha_j/(s+c_j)$.
Multiplication of the three inequalities yields
\[
 \mu_3=r_0r_1r_2
 <\frac{15}{8(s+1/4)(s+1/2)(s+1)}
 \le\frac{15}{8(s+1/2)^3}.
\]
Here the last step uses
$(s+1/4)(s+1)-(s+1/2)^2=s/4\ge0$.
Combining this estimate with $\kappa_3\le8\mu_3/15$ proves
\eqref{eq:shifted-third-bound}.
\end{proof}

The consecutive ratios of $e_i(m)=d_i(m)/\binom mi$ are
\[
 v_i(m)=\frac{M(i,m)}{M(i-1,m)}.
\]
By Corollary~\ref{cor:third-derivative},
\begin{equation}\label{eq:normalized-curvature}
 \log\frac{v_{i-1}(m)v_{i+1}(m)}{v_i(m)^2}=\Delta_i^3F(i-2,m)<0,\qquad 2\le i\le m-1.
\end{equation}
Since $q_i(m)=(m-i+1)v_i(m)/i$, we have
\begin{equation}\label{eq:curvature-factor}
 \frac{q_{i-1}(m)q_{i+1}(m)}{q_i(m)^2}=\beta_i(m)\frac{v_{i-1}(m)v_{i+1}(m)}{v_i(m)^2},
\end{equation}
where
\begin{equation}\label{eq:B}
 \beta_i(m)=\frac{i^2}{i^2-1}\left(1-\frac1{(m-i+1)^2}\right),\qquad \beta_i(m)-1=\frac{(m+1)(m+1-2i)}{(i^2-1)(m-i+1)^2}.
\end{equation}

\begin{lemma}\label{lem:midpoint}
For every integer $k\ge3$,
\[
 \Delta_k^3\log(k-1)>\frac12\Delta_k^3\log(k-3/2).
\]
\end{lemma}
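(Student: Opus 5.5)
The plan is to turn the inequality into a polynomial inequality in $k$ and check it by shifting the variable. Writing out the third forward differences,
\[
 \Delta_k^3\log(k-1)=\log\frac{(k+2)k^3}{(k+1)^3(k-1)},\qquad
 \Delta_k^3\log\Bigl(k-\tfrac32\Bigr)=\log\frac{(k+\frac32)(k-\frac12)^3}{(k+\frac12)^3(k-\frac32)}.
\]
For $k\ge3$ every factor is positive. Doubling and exponentiating, the claim is equivalent to
\[
 P(k):=(k+2)^2k^6\bigl(k+\tfrac12\bigr)^3\bigl(k-\tfrac32\bigr)-\bigl(k+\tfrac32\bigr)\bigl(k-\tfrac12\bigr)^3(k+1)^6(k-1)^2>0 .
\]
Multiplying by $2^4$ clears the halves, so $16P(k)$ is an integer polynomial of degree at most $12$.

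Before expanding, I would use the heuristic $\Delta^3\log x\approx 2/(x+3/2)^3$. Under it the left side is about $2/(k+\frac12)^3$ and the right side about $1/k^3$, which is smaller by a factor near $2$. So the gap is of the same order as the quantities themselves, and $P$ should have a positive leading coefficient after the top-degree cancellations. Concretely, both products are monic of degree $12$. Their first few symmetric-function coefficients (the sums of roots, the sums of pairwise products, and so on) coincide to some order, and the first non-cancelling coefficient should be positive. I would compute these coefficients by expanding, or equivalently by comparing $\log$ of each product as a power series in $1/k$ and reading off the first differing term.

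Next I substitute $k=3+x$ with $x\ge0$ and expand $16P(3+x)$ in powers of $x$. The hope is that every coefficient is nonnegative and the constant term, which is $16P(3)$, is strictly positive; that would prove the lemma for all integers $k\ge3$, and indeed for all real $k\ge3$. The value $P(3)$ can be checked directly: it is $25\cdot729\cdot\frac{343}{8}\cdot\frac32$ minus $\frac92\cdot\frac{125}{8}\cdot4096\cdot4$, and comparing these is routine.

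The main obstacle is the coefficient check. Because of the heavy cancellation, some middle coefficients of $P(3+x)$ may turn out negative. If that happens, I would first try a different shift, say $k=4+x$, and verify $k=3$ separately. Failing that, I would absorb the negative terms into neighbouring positive ones via AM--GM-type bounds such as $x^j\le x^{j-1}+x^{j+1}$.

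As a fallback that avoids expansion, there is the Laplace form
\[
\Delta_k^3\log x=\int_0^\infty e^{-xt}(1-e^{-t})^3\,\frac{dt}{t}.
\]
With it the difference of the two sides becomes
\[
\int_0^\infty e^{-(k-1)t}(1-e^{-t})^3\bigl(1-\tfrac12e^{t/2}\bigr)\frac{dt}{t}.
\]
The factor $1-\tfrac12e^{t/2}$ changes sign only at $t=2\log2$. One could then compare the positive and negative parts, using that $e^{-(k-1)t}$ gives more weight to small $t$ as $k$ grows, so the case $k=3$ would be the worst one. I would use this route only if the polynomial check becomes unwieldy.
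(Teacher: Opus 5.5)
Your route is sound but differs from the paper's. The paper never clears denominators. Integrating out two of the three variables in the cube formula gives $\Delta_x^3\log x=2\int_0^1\frac{dt}{(x+t)(x+t+1)(x+t+2)}$. With $w=k-\frac32+t\ge\frac32$, the ratio of the integrand at $x=k-\frac32$ to that at $x=k-1$ is at most $\frac43\cdot\frac65\cdot\frac87=\frac{64}{35}<2$ pointwise, which settles all real $k\ge3$ at once with no base case.

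In your version the decisive coefficient check is left as a hope, but it does go through, and no fallback is needed. One finds
\[
16P(k)=32k^9+32k^8-160k^7-240k^6-12k^5+114k^4+20k^3-28k^2-4k+3 .
\]
The $k^{12},k^{11},k^{10}$ terms cancel, and the leading term matches your heuristic $P\approx 2k^9$. One synthetic division then gives
\[
16P(k)=(k-3)\bigl(32k^8+128k^7+224k^6+432k^5+1284k^4+3966k^3+11918k^2+35726k+107174\bigr)+321525 .
\]
So $16P(k)\ge 321525$ for real $k\ge3$, and every coefficient of $16P(3+x)$ is positive.

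What your route buys is an exact, machine-checkable certificate and a view of sharpness. Since $16P(2)=-9781$, the inequality fails at $k=2$, so the hypothesis $k\ge3$ is sharp among integers.

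Your Laplace fallback is also valid. The factor $1-\frac12e^{t/2}$ changes sign once, from positive to negative, at $t_0=2\log2$. Multiplying by $e^{-(k-3)t}$ therefore bounds the integral below by $e^{-(k-3)t_0}$ times its value at $k=3$, which reduces everything to $P(3)>0$. The paper's pointwise comparison on $[0,1]$ avoids that sign change and the base case altogether.
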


\begin{proof}
Applying \eqref{eq:difference-cube} to $\log x$ and evaluating
two of the three integrals, we obtain
\[
 \Delta_x^3\log x=2\int_0^1
 \frac{dt}{(x+t)(x+t+1)(x+t+2)},\qquad x>0.
\]
To compare the integrands at $x=k-1$ and $x=k-3/2$, let
$w=k-3/2+t$. Since $w\ge3/2$,
\[
 \frac{(w+1/2)(w+3/2)(w+5/2)}{w(w+1)(w+2)}
 \le\frac43\cdot\frac65\cdot\frac87
 =\frac{64}{35}<2.
\]
Thus the first integrand is greater than half the second
throughout $0\le t\le1$. Integrating this strict inequality,
we obtain the stated comparison.
\end{proof}

\begin{proof}[\normalfont\bfseries Proof of Theorem~\ref{thm:half}]
Set $k=\lfloor m/2\rfloor$ and, for $2\le i\le m-1$, write
\[
 K_i=\log\frac{q_{i-1}(m)q_{i+1}(m)}{q_i(m)^2}
     =\log\beta_i(m)+\Delta_i^3 F(i-2,m).
\]
If $k+1\le i\le m-1$, then $2i\ge m+1$ and $\beta_i(m)\le1$.
By \eqref{eq:normalized-curvature}, $K_i<0$, proving
\eqref{eq:right-half}. The inequality remains strict at
$i=(m+1)/2$ for odd $m$, where $\beta_i(m)=1$.
When $m=3$, the left-hand range is empty, so the proof is complete.

Suppose now that $m\ge4$. The identity $u_i(m)=q_{i+1}(m)/q_i(m)$
and Theorem~\ref{thm:horizontal} imply
\begin{equation}\label{eq:curvature-decreasing}
 K_{i+1}-K_i
 =\log\frac{u_{i-1}(m)u_{i+1}(m)}{u_i(m)^2}<0,
 \qquad 2\le i\le m-2.
\end{equation}
For fixed $k\ge2$, the factor $\beta_k(m)$ increases with $m\ge2k$,
and its value at $m=2k$ satisfies
\[
 \log\beta_k(2k)
 =\log\frac{k^3(k+2)}{(k-1)(k+1)^3}
 =\Delta_k^3\log(k-1).
\]

If $k\ge3$, then $[k-2,k+1]\subseteq[0,m-1]$.
The bound \eqref{eq:shifted-third-bound} shows that
$F(s,m)+\tfrac12\log(s+1/2)$ has positive third derivative
on this interval. By \eqref{eq:difference-cube},
\[
 K_k>\Delta_k^3\log(k-1)
       -\tfrac12\Delta_k^3\log(k-3/2)>0,
\]
where the last inequality is Lemma~\ref{lem:midpoint}.

It remains to consider $k=2$, which occurs when $m=4$ or $5$.
For these degrees and all $s\ge0$, Cauchy--Schwarz and the two
moment bounds in Proposition~\ref{prop:moment-bounds} give
\[
 \kappa_3\le\frac8{15}\mu_3
 \le\frac8{15}\sqrt{\mu_2\mu_4}
 <\frac{8\sqrt5}{15(m+s)^{3/2}}\le\frac{\sqrt5}{15}.
\]
Since $\partial_s^3 F(s,m)=-\kappa_3$, integration over $[0,1]^3$ and
$\beta_2(m)\ge\beta_2(4)=32/27$ imply
\[
 K_2>\log\frac{32}{27}-\frac{\sqrt5}{15}
 >\frac5{32}-\frac3{20}=\frac1{160}>0.
\]
Here we used $\log(1+x)>x/(1+x)$ for $x>0$ and $\sqrt5<9/4$.
In all cases, \eqref{eq:curvature-decreasing} now gives
$K_i\ge K_k>0$ for $2\le i\le k$, proving \eqref{eq:left-half}.
\end{proof}

\section{Reciprocal moments and strict total positivity}\label{sec:columns}

In this section, we prove Theorem~\ref{thm:threshold-moments} and establish strict total positivity for the reciprocal coefficients. For $\lambda\ge0$ and $z>-1/2$, define
\[
 \Psi(\lambda,z)=\log\frac{D(\lambda,\lambda+z)}{2^zD(\lambda,\lambda)}.
\]
This normalization satisfies $\Psi(\lambda,0)=0$, and
\[
 a_n(\lambda)=e^{-\Psi(\lambda,n)},\qquad \lambda>0,\quad n\ge0.
\]
The derivatives of $\Psi$ with respect to $z$ satisfy
\begin{align*}
 \partial_z\Psi(\lambda,z)
 &=\partial_z\log D(\lambda,\lambda+z)-\log2,\\
 \partial_z^r\Psi(\lambda,z)
 &=\partial_z^r\log D(\lambda,\lambda+z),\qquad r\ge2.
\end{align*}
Thus the complete monotonicity assertion concerns $-\partial_z^2\Psi(\lambda,z)$ on $[0,\infty)$. We first record the growth of $D(\lambda,y)$, which excludes the moment property when $0<\lambda<1/4$.

Fix $\lambda>0$. The summand indexed by $j$ in \eqref{eq:Q} is bounded above by $p_j(\lambda)$ and tends to $p_j(\lambda)$ as $z\to\infty$. Since $\sum_{j\ge0}p_j(\lambda)=1$, dominated convergence gives $Q(\lambda,z)\to1$. We express the binomial coefficients in \eqref{eq:factorization} in terms of Gamma functions. Applying the asymptotic formula for ratios of Gamma functions \cite[Eq.~(5.11.12)]{NISTDLMF}, we obtain
\begin{equation}\label{eq:column-growth}
 D(\lambda,y)\sim\frac{2^y y^{\lambda-1/4}}{\Gamma(\lambda+1)\Gamma(3/4)},\qquad y\to\infty.
\end{equation}
In \eqref{eq:reciprocal-column}, the factor $2^n$ removes this geometric growth, and the factor $D(\lambda,\lambda)$ normalizes the zeroth term to one. For $0<\lambda<1/4$, the asymptotic estimate implies $a_n(\lambda)\to\infty$, so these numbers cannot be the moments of a probability measure on $[0,1]$.

\subsection{The derivatives of \texorpdfstring{$\Psi$}{Psi}}\label{subsec:column-curvature}

We derive a Laplace representation for $\partial_z\Psi$ and use it to determine the signs of the higher derivatives. To exclude $0<\lambda<1/4$, we also need the asymptotic behavior of $\partial_z^2\Psi(\lambda,z)$. For background on complete monotonicity of ratios of Gamma functions, see Bustoz and Ismail \cite{BustozIsmail}.

We start with the kernel for normalized coefficients in \cite[Lemma~4.3]{XieZhangUnified} and compare it with the kernel at coefficient index zero. For $t>0$, write $u=e^{-t/4}$ and $S=1+u+u^2+u^3$, and define
\begin{equation}\label{eq:coefficient-kernel}
 k(\lambda,t)=
 \begin{cases}
 \displaystyle\frac{u^3-u^4+u^{4\lambda+4}}S -tu^{4\lambda+4}\eta(\lambda,t),&\lambda>0,\\[6pt]
 \displaystyle\frac{u^3}S,&\lambda=0.
 \end{cases}
\end{equation}

\begin{lemma}\label{lem:coefficient-kernel}
For every real $\lambda\ge0$ and $y>\lambda-1/2$,
\begin{equation}\label{eq:normalized-log-derivative}
 -\frac{\partial}{\partial y}\log\frac{D(\lambda,y)}{2^y\binom y\lambda} =\int_0^\infty e^{-yt}k(\lambda,t)\,dt.
\end{equation}
For $\lambda>0$ and $t>0$,
\begin{equation}\label{eq:coefficient-kernel-bounds}
 0<\frac{u^3-u^4}S<k(\lambda,t)<k(0,t)=\frac{e^{t/4}-1}{e^t-1}.
\end{equation}
For each fixed $\lambda\ge0$, the function $k(\lambda,t)$ is bounded as $t\to0^+$ and is $O(e^{-3t/4})$ as $t\to\infty$.
\end{lemma}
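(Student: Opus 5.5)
The plan is to take logarithms in the factorization \eqref{eq:factorization} and write each factor as a Laplace transform in $y$. Dividing \eqref{eq:factorization} by $2^y\binom y\lambda$ and using \eqref{eq:zero-column} gives, for $\lambda>0$,
\[
 \log\frac{D(\lambda,y)}{2^y\binom y\lambda}
 =\log\frac{\Gamma(y+3/4)}{\Gamma(3/4)\Gamma(y+1)}
 +\log\frac{\Gamma(y+\lambda+1)\Gamma(y+5/4)}{\Gamma(y+1)\Gamma(y+\lambda+5/4)}
 +\log Q(\lambda,y+\lambda+1).
\]
For $\lambda=0$ only the first term is present.

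\textbf{The Gamma terms.} I will use the standard representation
\[
 \psi(a)-\psi(b)=\int_0^\infty\frac{e^{-bt}-e^{-at}}{1-e^{-t}}\,dt,\qquad a,b>0 .
\]
With $u=e^{-t/4}$ and $1-u^4=(1-u)S$, the first term gives
\[
 \psi(y+1)-\psi(y+3/4)=\int_0^\infty e^{-yt}\,\frac{u^3-u^4}{1-u^4}\,dt=\int_0^\infty e^{-yt}\,\frac{u^3}{S}\,dt .
\]
This is already the case $\lambda=0$ of \eqref{eq:normalized-log-derivative}.

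For $\lambda>0$, the binomial ratio contributes
\[
 \psi(y+1)-\psi(y+5/4)-\psi(y+\lambda+1)+\psi(y+\lambda+5/4).
\]
Its Laplace kernel is
\[
 \frac{(u^5-u^4)(1-u^{4\lambda})}{1-u^4}=-\frac{u^4(1-u^{4\lambda})}{S}.
\]
Adding this to $u^3/S$ produces exactly $(u^3-u^4+u^{4\lambda+4})/S$. All arguments of $\psi$ are positive because $y>\lambda-1/2\ge -1/2$.

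\textbf{The $Q$ term.} By \eqref{eq:logQ-measure} with $z=y+\lambda+1$,
\[
 -\partial_y\log Q(\lambda,y+\lambda+1)=\partial_y\int_0^\infty e^{-(y+\lambda+1)t}\eta(\lambda,t)\,dt .
\]
Differentiating under the integral sign gives the kernel $-t\,e^{-(\lambda+1)t}\eta(\lambda,t)=-t\,u^{4\lambda+4}\eta(\lambda,t)$. This step needs care. I will justify it by dominated convergence using the bound $0<t\eta<1$ from \eqref{eq:logQ-density-bound}. Since $z>1/2$, the dominating function $e^{-z_0t}$ is integrable on a neighbourhood of each $z_0$. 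This is the main technical point; everything else is algebra.

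\textbf{The kernel bounds \eqref{eq:coefficient-kernel-bounds}.} These follow from \eqref{eq:logQ-density-bound}, which in the variable $u$ reads $0<t\eta<(1-u)/(1-u^4)=1/S$.
\begin{itemize}
\item Lower bound: $k(\lambda,t)-(u^3-u^4)/S=u^{4\lambda+4}\bigl(1/S-t\eta\bigr)>0$.
\item Upper bound: $k(0,t)-k(\lambda,t)=u^4(1-u^{4\lambda})/S+t\,u^{4\lambda+4}\eta>0$, since $u<1$, $\lambda>0$ and $\eta>0$.
\item Positivity of $(u^3-u^4)/S$ is immediate.
\item The identity $u^3/S=u^3(1-u)/(1-u^4)=(e^{t/4}-1)/(e^t-1)$ follows after multiplying numerator and denominator by $e^t$.
\end{itemize}

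\textbf{Behaviour at the ends.} The sandwich between $(u^3-u^4)/S$ and $u^3/S$ shows that $k(\lambda,t)$ is bounded as $t\to0^+$, with both bounds at most $1/4$. It also shows that $k(\lambda,t)=O(u^3)=O(e^{-3t/4})$ as $t\to\infty$. Consequently the Laplace integral in \eqref{eq:normalized-log-derivative} converges absolutely for $y>-3/4$, in particular on the stated range. This justifies splitting the integral into the three pieces above.
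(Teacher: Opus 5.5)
Your proposal is correct and follows the same route as the paper. You take the negative logarithmic derivative of \eqref{eq:factorization} divided by $2^y\binom y\lambda$, turn the digamma differences into the kernel $(u^3-u^4+u^{4\lambda+4})/S$, let the $Q$-factor contribute $-tu^{4\lambda+4}\eta(\lambda,t)$ via \eqref{eq:logQ-measure}, and read off the kernel bounds from $0<t\eta<1/S$. Your dominated-convergence justification for differentiating the Laplace representation of $-\log Q$ in $z=y+\lambda+1>1/2$ is a detail the paper leaves implicit.
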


\begin{proof}
For $\lambda>0$, divide \eqref{eq:factorization} by $2^y\binom y\lambda$ and take the negative logarithmic derivative. Using \eqref{eq:zero-column} and the Gamma formula for the binomial coefficients, we apply the digamma integral representation \cite[Eq.~(5.9.16)]{NISTDLMF} to the resulting differences. The contribution to the kernel from these factors is
\[
 \frac{e^{-3t/4}+e^{-5t/4}-2e^{-t}+e^{-(\lambda+1)t}-e^{-(\lambda+5/4)t}}{1-e^{-t}} =\frac{u^3-u^4+u^{4\lambda+4}}S.
\]
The factor $Q(\lambda,y+\lambda+1)$ contributes $-te^{-(\lambda+1)t}\eta(\lambda,t)$ by \eqref{eq:logQ-measure}. The resulting kernel is $k(\lambda,t)$, so \eqref{eq:normalized-log-derivative} holds.

The density bound \eqref{eq:logQ-density-bound} reads $0<t\eta(\lambda,t)<1/S$. It follows that
\[
 \frac{u^3-u^4}S<k(\lambda,t)<\frac{u^3-u^4+u^{4\lambda+4}}S<\frac{u^3}S,
\]
since $0<u<1$ and $\lambda>0$. For $\lambda=0$, the integral identity follows directly from \eqref{eq:zero-column}. The bound $k(\lambda,t)\le k(0,t)=u^3/S$ proves the stated estimates at zero and infinity. These estimates also justify differentiation under the integral sign to every order on $y>\lambda-1/2$.
\end{proof}

To obtain the first derivative of $\Psi$, we also need the logarithmic derivative of the binomial factor. For $\lambda\ge0$ and $y>\lambda-1/2$, it has the integral representation
\[
 \frac{\partial}{\partial y}\log\binom y\lambda =\int_0^\infty e^{-yt}\frac{e^{\lambda t}-1}{e^t-1}\,dt.
\]
Combining this identity with \eqref{eq:normalized-log-derivative} and substituting $y=\lambda+z$, we obtain, for $z>-1/2$,
\begin{equation}\label{eq:column-first-derivative}
 \partial_z\Psi(\lambda,z) =\int_0^\infty e^{-(\lambda+z)t}h(\lambda,t)\,dt, \qquad h(\lambda,t)=\frac{e^{\lambda t}-1}{e^t-1}-k(\lambda,t).
\end{equation}
At $\lambda=1/4$, the first term in $h(\lambda,t)$ is exactly $k(0,t)$. Since this term increases with $\lambda$, the strict upper bound for $k(\lambda,t)$ implies
\begin{equation}\label{eq:column-kernel-signs}
 h(\lambda,t)>0\quad(\lambda\ge1/4),\qquad h(0,t)=-k(0,t)<0,\qquad t>0.
\end{equation}
In particular, positivity holds at the critical parameter as well.

We next determine the asymptotic behavior of $\partial_z^2\Psi$ for all $\lambda>0$. Define
\begin{equation}\label{eq:column-kernel}
 b(\lambda,t)=e^{-t}-e^{-(\lambda+3/4)t}+(1-e^{-t/4})\bigl(e^{-(\lambda+1)t}-e^{-(2\lambda+1)t}\bigr).
\end{equation}
\begin{proposition}\label{prop:column-curvature}
For every $\lambda>0$ and $z>-1/2$,
\begin{equation}\label{eq:column-curvature-laplace}
 \begin{split}
 -\partial_z^2\Psi(\lambda,z)={}&\int_0^\infty e^{-zt}
 \frac{tb(\lambda,t)}{1-e^{-t}}\,dt\\
 &+\int_0^\infty t^2e^{-(z+2\lambda+1)t}\,d\nu_\lambda(t).
 \end{split}
\end{equation}
Moreover,
\begin{equation}\label{eq:critical-curvature-limit}
 \lim_{z\to\infty}z^2\partial_z^2\Psi(\lambda,z)=\frac14-\lambda.
\end{equation}
Accordingly, the function $D(\lambda,y)$ is eventually strictly log-convex in $y$ when $0<\lambda<1/4$.
\end{proposition}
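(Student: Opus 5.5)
The plan is to differentiate the factorization \eqref{eq:factorization} twice in $y$, write each Gamma-function contribution through the trigamma integral $\psi'(w)=\int_0^\infty te^{-wt}(1-e^{-t})^{-1}\,dt$ for $w>0$, and write the contribution of $Q$ through \eqref{eq:logQ-measure}.

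\textbf{Step 1: the Gamma-function terms.} Set $y=\lambda+z$ with $z>-1/2$. By \eqref{eq:zero-column}, the $y$-dependent part of $\log D(0,y)$ is $y\log 2+\log\Gamma(y+3/4)-\log\Gamma(y+1)$. The three binomial factors in \eqref{eq:factorization} are expanded into Gamma functions. Taking second derivatives, the Gamma-function part of $\partial_y^2\log D(\lambda,y)$ is
\[
 \psi'(y+\tfrac34)-\psi'(y-\lambda+1)+\psi'(y+\lambda+1)-\psi'(y+\lambda+\tfrac54)-\psi'(y+1)+\psi'(y+\tfrac54).
\]
At $y=\lambda+z$, every argument is at least $z+1>1/2$, so the trigamma integral applies to each term. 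Factoring out $e^{-zt}$, the six exponentials combine to $-b(\lambda,t)$:
\[
 e^{-(\lambda+3/4)t}-e^{-t}+e^{-(2\lambda+1)t}-e^{-(2\lambda+5/4)t}-e^{-(\lambda+1)t}+e^{-(\lambda+5/4)t}=-b(\lambda,t),
\]
with $b$ as in \eqref{eq:column-kernel}. Since $\partial_z^2\Psi=\partial_z^2\log D(\lambda,\lambda+z)$, negating gives the first integral in \eqref{eq:column-curvature-laplace}.

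\textbf{Step 2: the $Q$ term.} By \eqref{eq:logQ-measure}, $\log Q(\lambda,w)=-\int_0^\infty e^{-wt}\,d\nu_\lambda(t)$. Here $w=z+2\lambda+1>0$ when $z>-1/2$. The measure $\nu_\lambda$ is finite, so differentiation under the integral is justified. Differentiating twice in $w$ gives $-\int_0^\infty t^2e^{-wt}\,d\nu_\lambda(t)$. Negating gives the second integral, and this proves \eqref{eq:column-curvature-laplace}.

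\textbf{Step 3: the limit \eqref{eq:critical-curvature-limit}.} The two integrals are treated separately.

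For the second integral, $z^2t^2e^{-zt}\le 4e^{-2}$ for all $t>0$, and this expression tends to $0$ pointwise as $z\to\infty$. Since $\nu_\lambda((0,\infty))=\lambda\log4<\infty$, dominated convergence shows that $z^2$ times the second integral tends to $0$.

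I expect this step to be the delicate point. The crude density bound $t\eta<1/S$ only gives an estimate of order $z^{-2}$, which is not enough. The finiteness of $\nu_\lambda$ is what makes the term negligible.

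For the first integral, expand near $t=0$. We have $e^{-t}-e^{-(\lambda+3/4)t}=(\lambda-\tfrac14)t+O(t^2)$, and the second part of $b$ is $O(t^2)$. Hence
\[
 \frac{tb(\lambda,t)}{1-e^{-t}}=(\lambda-\tfrac14)t+O(t^2)\quad\text{as } t\to0^+ .
\]
This kernel is also bounded by a multiple of $te^{-3t/4}$ for large $t$. Watson's lemma then shows that the first integral equals $(\lambda-\tfrac14)z^{-2}+O(z^{-3})$. Alternatively, one can substitute $t=s/z$ and apply dominated convergence. Combining the two integrals gives
\[
 z^2\partial_z^2\Psi(\lambda,z)\to\tfrac14-\lambda .
\]

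\textbf{Step 4: eventual log-convexity.} For $0<\lambda<1/4$, the limit $\tfrac14-\lambda$ is positive. Hence $\partial_y^2\log D(\lambda,y)=\partial_z^2\Psi(\lambda,z)>0$ for all sufficiently large $y$, so $D(\lambda,y)$ is eventually strictly log-convex in $y$.
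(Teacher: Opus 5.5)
Your proof is correct and is essentially the paper's argument. The paper obtains \eqref{eq:column-curvature-laplace} by differentiating \eqref{eq:column-first-derivative} once more and writing $e^{-\lambda t}h(\lambda,t)=b(\lambda,t)/(1-e^{-t})+te^{-(2\lambda+1)t}\eta(\lambda,t)$, which is the same bookkeeping as your direct trigamma expansion of \eqref{eq:factorization}, and it proves the limit by the same two dominated-convergence arguments (the substitution $u=zt$ in the first integral, finiteness of $\nu_\lambda$ in the second). One harmless slip: for $\lambda<1/4$ the trigamma argument $\lambda+z+3/4$ is smaller than $z+1$, contrary to your claim, but it still exceeds $1/4>0$, so the integral representation remains valid.
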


\begin{proof}
The definitions of $h(\lambda,t)$ and $b(\lambda,t)$ give
\[
 e^{-\lambda t}h(\lambda,t)=\frac{b(\lambda,t)}{1-e^{-t}}+te^{-(2\lambda+1)t}\eta(\lambda,t).
\]
Differentiating \eqref{eq:column-first-derivative} with respect to $z$ and using this identity, we obtain \eqref{eq:column-curvature-laplace}. Since $b(\lambda,t)=O(t)$ as $t\to0^+$, both integrands decay exponentially at infinity, and $\nu_\lambda$ is finite, we may differentiate under both integrals to every order, uniformly on compact subintervals of $(-1/2,\infty)$.

To evaluate the limit, let $\Phi(\lambda,t)=b(\lambda,t)/(1-e^{-t})$. This function is bounded on $(0,\infty)$ and satisfies
\[
 \lim_{t\to0^+}\Phi(\lambda,t)=\lambda-\frac14, \qquad \lim_{t\to\infty}\Phi(\lambda,t)=0.
\]
After the substitution $u=zt$, dominated convergence gives
\[
 z^2\int_0^\infty e^{-zt}t\Phi(\lambda,t)\,dt =\int_0^\infty ue^{-u}\Phi(\lambda,u/z)\,du \longrightarrow\lambda-\frac14.
\]
For the other integral, the factor $(zt)^2e^{-zt}$ tends to zero for every $t>0$ and is bounded by $4e^{-2}$. Since $\nu_\lambda$ is finite, $z^2\int_0^\infty t^2e^{-(z+2\lambda+1)t}\,d\nu_\lambda(t)\to0$. Returning to \eqref{eq:column-curvature-laplace}, we obtain \eqref{eq:critical-curvature-limit}. When $0<\lambda<1/4$, the limit is positive, so $\partial_z^2\Psi(\lambda,z)>0$ for all sufficiently large $z$.
\end{proof}

\subsection{Hausdorff moments and Hankel minors}\label{sec:moment-extensions}

For $\lambda\ge1/4$, we use Bernstein functions to construct probability measures on $[0,1]$ representing every positive power of $\{a_n(\lambda)\}_{n\ge0}$. Strict positivity of all Hankel minors will follow once we show that these measures have infinitely many positive support points.

For a Hausdorff moment sequence $m_n=\int_0^1t^n\,d\sigma(t)$, suppose that the support of $\sigma$ contains infinitely many points in $(0,1]$. Then every finite minor of $[m_{i+j}]_{i,j\ge0}$ is positive \cite[Theorem~2.8]{FallatJohnsonSokal}. For increasing nonnegative integer indices $r_1<\cdots<r_k$ and $s_1<\cdots<s_k$, determinant expansion and Fubini's theorem give
\begin{equation}\label{eq:moment-minors}
 \det[m_{r_i+s_j}]_{i,j=1}^k =\frac1{k!}\int_{[0,1]^k} \det[t_j^{r_i}]_{i,j=1}^k\det[t_j^{s_i}]_{i,j=1}^k \,d\sigma(t_1)\cdots d\sigma(t_k).
\end{equation}
Each generalized Vandermonde determinant is the Vandermonde product times a Schur polynomial, and is therefore positive when $0<t_1<\cdots<t_k$. Their product is symmetric in the variables and nonnegative on the whole cube by continuity. Choose $k$ distinct support points in $(0,1]$ and pairwise disjoint relative neighborhoods of these points, ordered from left to right. The integrand is strictly positive on their product, which has positive $\sigma^k$-measure, so the determinant is strictly positive.

A nonnegative smooth function $f$ on $(0,\infty)$ is a Bernstein function if $f'$ is completely monotone. For a Bernstein function $f$ with $f(0^+)=0$, $e^{-\tau f}$ is the Laplace transform of a probability measure on $[0,\infty)$ for every $\tau>0$ \cite[Theorems~1.4 and~3.7]{SchillingSongVondracek}; the zero function corresponds to the point mass at zero.

\begin{lemma}\label{lem:bernstein-moments}
Let $f$ be a Bernstein function with $f(0^+)=0$, and suppose that $f$ is not of the form $f(x)=ax$ for a constant $a\ge0$. For every $\tau>0$, there is a probability measure $\sigma_\tau$ on $[0,1]$ with infinitely many positive support points such that
\[
 e^{-\tau f(n)}=\int_0^1u^n\,d\sigma_\tau(u),\qquad n\ge0,
\]
where $f(0)=0$. The Hankel matrix $[e^{-\tau f(i+j)}]_{i,j\ge0}$ is strictly totally positive.
\end{lemma}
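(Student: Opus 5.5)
The plan is to obtain $\sigma_\tau$ as the image of a probability measure on $[0,\infty)$ under the map $s\mapsto e^{-s}$, and then to read off strict total positivity from the minor formula \eqref{eq:moment-minors}.

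\emph{Construction of $\sigma_\tau$.} Since $f$ is Bernstein with $f(0^+)=0$, the cited result \cite[Theorems~1.4 and~3.7]{SchillingSongVondracek} gives, for each $\tau>0$, a probability measure $\mu_\tau$ on $[0,\infty)$ with
\[
 e^{-\tau f(x)}=\int_{[0,\infty)}e^{-xs}\,d\mu_\tau(s),\qquad x>0.
\]
Let $\sigma_\tau$ be the push-forward of $\mu_\tau$ under $s\mapsto e^{-s}$. It is a probability measure on $(0,1]\subseteq[0,1]$, and for $n\ge1$ we have $e^{-\tau f(n)}=\int_0^1 u^n\,d\sigma_\tau(u)$. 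At $n=0$, the convention $f(0)=0$ gives the value $1=\sigma_\tau([0,1])$, so the moment identity holds for all $n\ge0$.

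\emph{Infinitely many support points.} This is the main step. Every support point of $\sigma_\tau$ lies in $(0,1]$, so it suffices to show that $\mu_\tau$ is not supported on a finite set.

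Suppose instead that $\mu_\tau=\sum_{k=1}^N c_k\delta_{s_k}$ with $c_k>0$ and distinct $s_k\ge0$. Then $g(x)=e^{-\tau f(x)}=\sum_k c_ke^{-s_kx}$ is an exponential polynomial, and it extends to an entire function.

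Write $f(x)=bx+\int_{(0,\infty)}(1-e^{-xs})\,\pi(ds)$ with $b\ge0$ and a Lévy measure $\pi$. Then $\mu_\tau$ is the law at time $\tau$ of a subordinator with drift $b$ and Lévy measure $\pi$. Because $f$ is not linear, $\pi\neq0$.

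The intended contradiction is that the law of a subordinator with nonzero Lévy measure is never finitely supported. Concretely, if $\pi(0,\infty)=\infty$, then $\mu_\tau$ is continuous (Hartman--Wintner) and so cannot be atomic. If $\pi$ is finite and nonzero, then $\mu_\tau=\delta_{b\tau}*\bigl(e^{-\tau|\pi|}\sum_{j\ge0}\tau^j\pi^{*j}/j!\bigr)$. If $s_0>0$ lies in the support of $\pi$, this measure charges neighborhoods of $b\tau+js_0$ for every $j\ge0$, hence has unbounded support and so infinitely many support points.

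An alternative analytic route avoids the probabilistic facts. The function $f=-\tau^{-1}\log g$ would then have its derivative $f'=-g'/(\tau g)$ meromorphic in the plane. But $f'$ is completely monotone and nonconstant, so $f'(x)=b+\int s e^{-xs}\,\pi(ds)$. Comparing the growth of $f'$ and $g'/g$ as $x\to\infty$ forces $\pi$ to be finitely supported. A finitely supported $\pi$ already yields an infinite Poisson-type support for $\mu_\tau$, which is a contradiction. I would present the probabilistic version and keep the analytic version as a backup.

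\emph{Hankel positivity.} Once $\sigma_\tau$ is known to have infinitely many support points in $(0,1]$, every finite minor of $[e^{-\tau f(i+j)}]_{i,j\ge0}$ is strictly positive by \eqref{eq:moment-minors} and the discussion following it (equivalently \cite[Theorem~2.8]{FallatJohnsonSokal}). This gives strict total positivity of the Hankel matrix.
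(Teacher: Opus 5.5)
Your proof is correct, but the key step, infinitely many support points, goes by a different route from the paper.

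\textbf{Your route.} You use the L\'evy--Khintchine form $f(x)=bx+\int_{(0,\infty)}(1-e^{-xs})\,\pi(ds)$ and identify $\mu_\tau$ as the time-$\tau$ law of a subordinator. You then split into two cases:
\begin{itemize}
\item if $\pi$ is infinite, the law is continuous by Hartman--Wintner;
\item if $\pi$ is finite and nonzero, the compound Poisson series puts $b\tau+js_0$ in the support for every $j\ge0$.
\end{itemize}

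\textbf{The paper's route.} It uses neither the L\'evy--Khintchine representation nor a continuity theorem. Uniqueness of the Laplace transform gives $\mu_\tau=\mu_{\tau/N}^{*N}$. If $\mu_\tau$ were supported in $[0,B]$, this would force $\mu_{\tau/N}$ onto $[0,B/N]$. The bound $N\operatorname{Var}(X_1)\le B^2/N$ then makes $\mu_\tau$ a point mass, so $f$ would be linear.

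\textbf{What each buys.} The paper's argument is shorter, self-contained and needs no case split. It also gives one uniform conclusion: $\mu_\tau$ always has unbounded support, so $\sigma_\tau$ accumulates at $0$. Your argument relies on two external theorems. In return it gives extra information: $\mu_\tau$ is atomless whenever $\pi$ is infinite.

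\textbf{One small slip.} It is not true that every support point of $\sigma_\tau$ lies in $(0,1]$. In your own compound Poisson case, $0$ belongs to the closed support. What you actually need is that $s\mapsto e^{-s}$ is a homeomorphism of $[0,\infty)$ onto $(0,1]$. Then infinite support of $\mu_\tau$ gives infinitely many support points in $(0,1]$, which is what the Hankel-minor formula requires.

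\textbf{The analytic backup.} The claim that ``comparing growth'' forces $\pi$ to be finitely supported is not justified as written. Your main argument does not depend on it.
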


\begin{proof}
By the Bernstein representation, there is a probability measure $\mu_\tau$ on $[0,\infty)$ such that $e^{-\tau f(x)}=\int_0^\infty e^{-xt}\,d\mu_\tau(t)$. For every positive integer $N$, uniqueness of the Laplace transform implies $\mu_\tau=\mu_{\tau/N}^{*N}$.

We claim that $\mu_\tau$ has unbounded support. Suppose instead that its support is contained in $[0,B]$. The convolution identity forces $\mu_{\tau/N}$ to be supported in $[0,B/N]$. Let $X_1,\ldots,X_N$ be independent with distribution $\mu_{\tau/N}$. Their sum has distribution $\mu_\tau$, so
\[
 \operatorname{Var}(X_1+\cdots+X_N)=N\operatorname{Var}(X_1)\le\frac{B^2}{N}.
\]
Letting $N\to\infty$ shows that $\mu_\tau$ has variance zero, hence is a point mass $\delta_b$. Its Laplace transform implies $f(x)=bx/\tau$, a contradiction.

Push $\mu_\tau$ forward under $v=e^{-t}$ to obtain $\sigma_\tau$. Its moments are $e^{-\tau f(n)}$. Since $\mu_\tau$ has unbounded support, $\sigma_\tau$ has infinitely many positive support points. Strict total positivity now follows from \eqref{eq:moment-minors}.
\end{proof}

\begin{proof}[\normalfont\bfseries Proof of Theorem~\ref{thm:threshold-moments}]
Suppose first that $0<\lambda<1/4$. By \eqref{eq:critical-curvature-limit}, $\partial_z^2\Psi(\lambda,z)>0$ for all sufficiently large $z$, so $-\partial_z^2\Psi(\lambda,z)$ is not completely monotone. Also, \eqref{eq:column-growth} implies $a_n(\lambda)\to\infty$. Since a probability Hausdorff moment sequence is bounded by one, the moment property fails.

Now suppose $\lambda\ge1/4$. By \eqref{eq:column-kernel-signs}, $h(\lambda,t)>0$ for every $t>0$. From \eqref{eq:column-first-derivative}, for every $j\ge0$ and $z\ge0$,
\[
 (-1)^j\partial_z^{j+1}\Psi(\lambda,z)=\int_0^\infty t^j e^{-(\lambda+z)t}h(\lambda,t)\,dt>0.
\]
Hence $\partial_z\Psi(\lambda,z)$ and $-\partial_z^2\Psi(\lambda,z)$ are strictly completely monotone in $z$ on $[0,\infty)$.

We integrate \eqref{eq:column-first-derivative} from $0$ to $z$ and interchange the integrals by Tonelli's theorem. Since $\Psi(\lambda,0)=0$, we obtain
\begin{equation}\label{eq:column-levy-increment}
 \Psi(\lambda,z)=\int_0^\infty(1-e^{-zt})\frac{e^{-\lambda t}h(\lambda,t)}t\,dt,\qquad z\ge0.
\end{equation}
The function $h(\lambda,t)$ is bounded near zero, and $e^{-\lambda t}h(\lambda,t)$ decays exponentially at infinity. Thus the integral is finite and continuous at zero. Since $\Psi(\lambda,z)\ge0$ and $\partial_z\Psi(\lambda,z)$ is completely monotone, $\Psi(\lambda,z)$ is a Bernstein function of $z$. The strict inequality $\partial_z^2\Psi(\lambda,z)<0$ shows that it is not linear. By Lemma~\ref{lem:bernstein-moments}, for every $\tau>0$ there is a probability measure $\sigma_{\lambda,\tau}$ on $[0,1]$ with infinitely many positive support points such that
\[
 e^{-\tau\Psi(\lambda,n)}=\int_0^1 t^n\,d\sigma_{\lambda,\tau}(t),
 \qquad n\ge0.
\]
Since $e^{-\tau\Psi(\lambda,n)}=a_n(\lambda)^\tau$, $\{a_n(\lambda)\}_{n\ge0}$ is an infinitely divisible probability Hausdorff moment sequence.
\end{proof}

\begin{theorem}\label{thm:real-reciprocal-tp}
For every real $\lambda\ge1/4$ and every real $\tau>0$, the Hankel matrix
\[
 [a_{i+j}(\lambda)^\tau]_{i,j\ge0}
\]
is strictly totally positive.
\end{theorem}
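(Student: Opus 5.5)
The plan is to derive Theorem~\ref{thm:real-reciprocal-tp} directly from the Bernstein-function structure already established in the proof of Theorem~\ref{thm:threshold-moments}, together with Lemma~\ref{lem:bernstein-moments}.

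Fix $\lambda\ge1/4$ and $\tau>0$, and set $f(z)=\Psi(\lambda,z)$ for $z\ge0$. The first step is to check that $f$ meets the hypotheses of Lemma~\ref{lem:bernstein-moments}. Representation \eqref{eq:column-levy-increment} gives $\Psi(\lambda,0)=0$, so $f$ is continuous at zero with $f(0^+)=0$. By \eqref{eq:column-kernel-signs}, $h(\lambda,t)>0$. Hence \eqref{eq:column-first-derivative} shows that $f'$ is strictly completely monotone, and in particular $f\ge0$. So $f$ is a Bernstein function. It is not of the form $f(x)=ax$, because the proof of Theorem~\ref{thm:threshold-moments} shows $\partial_z^2\Psi(\lambda,z)<0$ strictly for $z\ge0$. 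This is the integral $-\int_0^\infty t e^{-(\lambda+z)t}h(\lambda,t)\,dt<0$.

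The second step applies Lemma~\ref{lem:bernstein-moments}. It gives a probability measure $\sigma_\tau$ on $[0,1]$ with infinitely many positive support points such that
\[
 e^{-\tau f(n)}=\int_0^1u^n\,d\sigma_\tau(u),\qquad n\ge0,
\]
and it asserts that $[e^{-\tau f(i+j)}]_{i,j\ge0}$ is strictly totally positive. Since $a_n(\lambda)=e^{-\Psi(\lambda,n)}$, we have $e^{-\tau f(i+j)}=a_{i+j}(\lambda)^\tau$, which is exactly the Hankel matrix in the statement.

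The underlying mechanism is the minor formula \eqref{eq:moment-minors}. It requires infinitely many positive support points, and these come from the unbounded support of the Lévy-type measure $\mu_\tau$. Since both of these facts are already proved, I expect no real obstacle. The only point needing care is the strict nonlinearity of $f$, which must come from the strict negativity of $\partial_z^2\Psi$ rather than from complete monotonicity alone.
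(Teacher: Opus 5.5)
Your proposal is correct and follows essentially the paper's route. The paper cites Theorem~\ref{thm:threshold-moments} for a representing measure of $\{a_n(\lambda)^\tau\}_{n\ge0}$ with infinitely many positive support points and applies \eqref{eq:moment-minors}. You instead re-verify the nonlinear Bernstein hypotheses established in that proof and invoke the strict total positivity clause of Lemma~\ref{lem:bernstein-moments}, which rests on the same minor formula.
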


\begin{proof}
By Theorem~\ref{thm:threshold-moments}, the sequence $\{a_n(\lambda)^\tau\}_{n\ge0}$ has a representing measure with infinitely many positive support points. Apply \eqref{eq:moment-minors} to this measure.
\end{proof}

For the reciprocal coefficients, fix $\ell\ge1$ and $\tau>0$. By Theorem~\ref{thm:threshold-moments},
\[
 \frac{1}{d_\ell(\ell+n)^\tau}=\frac{1}{d_\ell(\ell)^\tau}\int_0^1\left(\frac{t}{2^\tau}\right)^n\,d\sigma_{\ell,\tau}(t),\qquad n\ge0.
\]
Multiply $\sigma_{\ell,\tau}$ by $d_\ell(\ell)^{-\tau}$ and make the change of variable $v=t/2^\tau$. The resulting finite positive measure is supported on $[0,2^{-\tau}]$ and has $n$-th moment $d_\ell(\ell+n)^{-\tau}$. Thus the reciprocal coefficients raised to the power $\tau$ form a Hausdorff moment sequence.

For comparison, strict negativity of $\partial_z^2\Psi(\ell,z)$ for $z\ge0$, together with \eqref{eq:difference-cube}, implies $d_\ell(\ell)d_\ell(\ell+2)-d_\ell(\ell+1)^2<0$. Hence the Hankel matrix $[d_\ell(\ell+i+j)]_{i,j\ge0}$ is neither positive semidefinite nor strictly totally positive.

\begin{corollary}\label{cor:endpoint-atom}
Let $\lambda\ge1/4$ and $\tau>0$, and let $\sigma_{\lambda,\tau}$ be a probability measure on $[0,1]$ whose $n$-th moment is $a_n(\lambda)^\tau$ for every $n\ge0$. Then
\[
 \sigma_{\lambda,\tau}(\{1\})=\begin{cases}c_*^\tau,&\lambda=1/4,\\0,&\lambda>1/4,\end{cases}
\]
where
\[
 c_*=2^{-1/4}\Gamma(5/4)\Gamma(3/4)D(1/4,1/4)\in(0,1).
\]
\end{corollary}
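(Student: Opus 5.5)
The mass at $1$ equals the limit of the moments: $\sigma(\{1\})=\lim_{n\to\infty}\int_0^1 t^n\,d\sigma(t)$ by dominated convergence, since $t^n\to\mathbf 1_{\{t=1\}}$ on $[0,1]$. This holds for any probability measure representing the sequence, not just the one from Theorem~\ref{thm:threshold-moments}. So the plan is to compute $\lim_{n\to\infty}a_n(\lambda)^\tau=\bigl(\lim_n a_n(\lambda)\bigr)^\tau$. The limit exists because $a_n(\lambda)=e^{-\Psi(\lambda,n)}$ and $\Psi(\lambda,\cdot)$ is increasing for $\lambda\ge1/4$.

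First step: use the asymptotic \eqref{eq:column-growth}, which gives
\[
 a_n(\lambda)=\frac{2^nD(\lambda,\lambda)}{D(\lambda,\lambda+n)}\sim D(\lambda,\lambda)\,\Gamma(\lambda+1)\Gamma(3/4)\,\frac{2^n}{2^{\lambda+n}(\lambda+n)^{\lambda-1/4}},
\]
because $y=\lambda+n\to\infty$. For $\lambda>1/4$ the factor $(\lambda+n)^{-(\lambda-1/4)}$ tends to $0$, so the mass at $1$ is zero.

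For $\lambda=1/4$ the power factor is $1$, and the limit is
\[
 D(1/4,1/4)\,\Gamma(5/4)\Gamma(3/4)\,2^{-1/4}=c_*.
\]
Raising to the power $\tau$ gives $c_*^\tau$.

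It remains to check $c_*\in(0,1)$.
\begin{itemize}
\item Positivity is immediate from $D>0$.
\item For $c_*<1$, recall that $\Psi(1/4,z)$ is strictly increasing with $\Psi(1/4,0)=0$: its derivative is $\partial_z\Psi=\int_0^\infty e^{-(\lambda+z)t}h\,dt>0$ by \eqref{eq:column-kernel-signs}. Hence $a_n(1/4)=e^{-\Psi(1/4,n)}\le a_1(1/4)<1$ for all $n\ge1$, and the limit $c_*$ satisfies $c_*\le a_1(1/4)<1$.
\end{itemize}

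The main point to verify is the precise constant in \eqref{eq:column-growth}. It should be rechecked from the factorization \eqref{eq:factorization} together with \eqref{eq:zero-column}, using $Q\to1$ and $\Gamma(y+3/4)/\Gamma(y+1)\sim y^{-1/4}$. The ratio $\binom{y+x}{x}/\binom{y+x+1/4}{x}$ tends to $1$, and $\binom yx\sim y^x/\Gamma(x+1)$. This yields
\[
 D(\lambda,y)\sim\frac{2^y y^{\lambda-1/4}}{\Gamma(3/4)\Gamma(\lambda+1)},
\]
matching the stated formula, with $\Gamma(\lambda+1)=\Gamma(5/4)$ at $\lambda=1/4$.
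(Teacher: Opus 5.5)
Your proof is correct and follows the paper's argument: dominated convergence identifies $\sigma_{\lambda,\tau}(\{1\})$ with $\lim_{n\to\infty}a_n(\lambda)^\tau$ for any representing measure, and the growth formula \eqref{eq:column-growth} evaluates this limit. The only difference is in showing $c_*<1$. The paper uses the fact that the measure constructed in Theorem~\ref{thm:threshold-moments} has infinitely many support points, so it is not $\delta_1$. You instead use $h(1/4,t)>0$ to get $\Psi(1/4,\cdot)$ strictly increasing and hence $c_*\le a_1(1/4)<1$. Your route is slightly more direct, since it avoids the unbounded-support argument of Lemma~\ref{lem:bernstein-moments}.
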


\begin{proof}
By \eqref{eq:column-growth},
\[
 a_n(\lambda)\sim 2^{-\lambda}\Gamma(\lambda+1)\Gamma(3/4)
 D(\lambda,\lambda)(n+\lambda)^{1/4-\lambda}.
\]
On $[0,1]$, $t^n$ converges pointwise to the indicator of $\{1\}$. Dominated convergence gives
\[
 \sigma_{\lambda,\tau}(\{1\})
 =\lim_{n\to\infty}a_n(\lambda)^\tau,
\]
and the asymptotic formula yields the asserted values. For $\lambda=1/4$, the measure constructed in Theorem~\ref{thm:threshold-moments} has infinitely many support points, so $c_*<1$.
\end{proof}

\section{Logarithmic differences and asymptotics}\label{sec:degree-differences}

For fixed $\ell$, we prove the sign inequalities for $\Delta_m^r\log d_\ell(m)$ in Theorem~\ref{thm:column-log-differences} and deduce their consequences for iterated ratios. We apply finite differences to \eqref{eq:column-first-derivative}; the signs of $h(\ell,t)$ in \eqref{eq:column-kernel-signs} allow us to treat $\ell=0$ and $\ell\ge1$ using the same formula. We then study $\Delta_m^r d_\ell(m)$ using the asymptotic growth of the coefficients.

\begin{proof}[\normalfont\bfseries Proof of Theorem~\ref{thm:column-log-differences}]
For integers $\ell\ge0$ and $m\ge\ell$, set $\lambda=\ell$ in \eqref{eq:column-first-derivative} and integrate with respect to $z$ over $[m-\ell,m+1-\ell]$. Since
\[
 \Psi(\ell,m+1-\ell)-\Psi(\ell,m-\ell)
 =\Delta_m\log d_\ell(m)-\log2,
\]
we obtain
\[
 \Delta_m\log d_\ell(m)=\log2+\int_0^\infty e^{-mt}\frac{1-e^{-t}}t h(\ell,t)\,dt.
\]
The integral is absolutely convergent, including at $m=\ell=0$. For $r\ge2$, taking $r-1$ further differences yields
\[
 (-1)^{r-1}\Delta_m^r\log d_\ell(m)=\int_0^\infty e^{-mt}\frac{(1-e^{-t})^r}t h(\ell,t)\,dt.
\]
When $\ell\ge1$, $h(\ell,t)$ is strictly positive, and we obtain \eqref{eq:positive-differences}. When $\ell=0$, it is strictly negative, which gives \eqref{eq:zero-differences}.

For $\ell\ge1$, the first integral identity implies $\Delta_m\log d_\ell(m)>\log2>0$. For $\ell=0$, formula \eqref{eq:zero-column} gives $d_0(m+1)/d_0(m)=2(m+3/4)/(m+1)>1$. Thus the first logarithmic differences are positive in this case as well.
\end{proof}

\begin{corollary}[{\cite[Conjectures~4.2 and~4.5]{Zhao}}]\label{cor:transposed-consequences}
The sequence $\{d_0(m)\}_{m\ge0}$ is strictly infinitely log-monotonic. For every integer $\ell\ge1$, $\{d_\ell(m+1)/d_\ell(m)\}_{m\ge\ell}$ is strictly infinitely log-monotonic. For every $\ell\ge1$, $\{c_m^{(\ell)}\}_{m\ge\ell+1}$ is strictly log-concave.
\end{corollary}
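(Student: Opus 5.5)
The plan is to derive all three assertions from Theorem~\ref{thm:column-log-differences}, using the identity
\[
 \Delta_m^2\log(\mathcal R^j a)_m=\Delta_m^{j+2}\log a_m,\qquad j\ge0.
\]
This identity follows by induction on $j$, since $\log(\mathcal R a)_m=\Delta_m\log a_m$ and finite differences commute. Recall that a positive sequence $x_m$ is strictly log-convex at an interior index exactly when $\Delta_m^2\log x_{m-1}>0$, and strictly log-concave when this quantity is negative. Strict infinite log-monotonicity of $a$ therefore amounts to
\[
 (-1)^j\Delta_m^{j+2}\log a_m>0\qquad\text{for all } j\ge0
\]
at every admissible index.

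For $a_m=d_0(m)$, $m\ge0$, set $r=j+2\ge2$. Then $(-1)^j\Delta_m^{r}\log d_0(m)=(-1)^r\Delta_m^r\log d_0(m)$, which is positive by \eqref{eq:zero-differences} for every $m\ge0$. This covers every interior index of every iterate $\mathcal R^j a$. One only has to note that $\mathcal R^j a$ is indexed by $m\ge0$, and that its interior log-convexity inequality at $m+1$ involves $\Delta_m^{j+2}\log a_m$ with $m\ge0$.

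For $\ell\ge1$, put $a_m=d_\ell(m+1)/d_\ell(m)=(\mathcal R d_\ell)_m$ for $m\ge\ell$. Then $\mathcal R^j a=\mathcal R^{j+1}d_\ell$, and the relevant second difference is $\Delta_m^{j+3}\log d_\ell(m)$. Setting $r=j+3\ge3$, we have $(-1)^j=(-1)^{r-1}$, so \eqref{eq:positive-differences} gives
\[
 (-1)^j\Delta_m^2\log(\mathcal R^ja)_m>0\qquad\text{for } m\ge\ell.
\]
This is the required alternation: log-convex for even $j$ and log-concave for odd $j$.

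For the last claim, note that $\log c_m^{(\ell)}=\Delta_m^2\log d_\ell(m-1)$. Strict log-concavity of $\{c_m^{(\ell)}\}_{m\ge\ell+1}$ at an interior index $m\ge\ell+2$ is then
\[
 \Delta_m^2\log c_{m-1}^{(\ell)}=\Delta_m^4\log d_\ell(m-2)<0.
\]
Since $m-2\ge\ell$, the case $r=4$ of \eqref{eq:positive-differences} gives exactly this.

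There is no real obstacle here. The only care needed is in index bookkeeping: checking that each shifted starting index stays in the range $m\ge\ell$ (or $m\ge0$) where Theorem~\ref{thm:column-log-differences} applies, and that the sign parity matches the even/odd convention in the definition of infinite log-monotonicity.
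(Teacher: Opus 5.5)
Your proposal is correct and follows essentially the same route as the paper: the identity $\Delta_m^2\log(\mathcal R^j a)_m=\Delta_m^{j+2}\log a_m$ combined with Theorem~\ref{thm:column-log-differences} at $r=j+2$ for $d_0$ and $r=j+3$ for $d_\ell(m+1)/d_\ell(m)$, plus $r=4$ at $m-2\ge\ell$ for $c_m^{(\ell)}$. Your sign-parity and index checks agree with the paper's.
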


\begin{proof}
For a positive sequence $a$, iteration of the ratio operator gives
\[
 \Delta_m^2\log(\mathcal{R}^j a)_m=\Delta_m^{j+2}\log a_m,\qquad j\ge0.
\]
For $a_m=d_0(m)$, Theorem~\ref{thm:column-log-differences} with
$r=j+2$ implies $(-1)^j\Delta_m^2\log(\mathcal{R}^j a)_m>0$.
For $a_m=d_\ell(m+1)/d_\ell(m)$ with $\ell\ge1$, we have
$\log a_m=\Delta_m\log d_\ell(m)$, and hence
\[
 (-1)^j\Delta_m^2\log(\mathcal{R}^j a)_m
 =(-1)^j\Delta_m^{j+3}\log d_\ell(m)>0,
\]
where the last inequality follows from the same theorem with $r=j+3$.
Thus, in both cases, $\mathcal{R}^j a$ is strictly log-convex for even $j$
and strictly log-concave for odd $j$.

Finally,
\[
 \log\frac{(c_m^{(\ell)})^2}{c_{m-1}^{(\ell)}c_{m+1}^{(\ell)}}
 =-\Delta_m^4\log d_\ell(m-2)>0,\qquad m\ge\ell+2,
\]
which proves strict log-concavity of $\{c_m^{(\ell)}\}_{m\ge\ell+1}$.
\end{proof}

For ordinary differences, only the growth estimate is needed. Together with \eqref{eq:zero-column}, equation \eqref{eq:column-growth} implies $d_\ell(m)\sim 2^m m^{\ell-1/4}/(\ell!\Gamma(3/4))$ as $m\to\infty$.

\begin{corollary}[{\cite[Conjecture~4.4]{JiangWang}}]\label{cor:forward-differences}
For every fixed pair of integers $\ell,r\ge0$,
\[
\begin{aligned}
 &\Delta_m^r d_\ell(m)\sim
 \frac{2^m m^{\ell-1/4}}{\ell!\Gamma(3/4)}\qquad(m\to\infty),\\[6pt]
 &\lim_{m\to\infty}
 \frac{\Delta_m^r d_\ell(m+1)}{\Delta_m^r d_\ell(m)}=2.
\end{aligned}
\]
In particular, $\Delta_m^r d_\ell(m)>0$ for all sufficiently large $m$.
\end{corollary}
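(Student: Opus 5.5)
The asymptotic $d_\ell(m)\sim 2^m m^{\ell-1/4}/(\ell!\,\Gamma(3/4))$ stated just before the corollary, combined with the binomial expansion of the forward difference, gives the result. Write
\[
 \Delta_m^r d_\ell(m)=\sum_{k=0}^r(-1)^{r-k}\binom rk d_\ell(m+k).
\]
Fix $k$ with $0\le k\le r$. From the growth formula,
\[
 \frac{d_\ell(m+k)}{d_\ell(m)}\to 2^k\qquad(m\to\infty),
\]
because $((m+k)/m)^{\ell-1/4}\to1$. This ratio limit is the only input needed.

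Dividing the binomial sum by $d_\ell(m)$ and passing to the limit term by term, which is legitimate since there are finitely many terms and $r$ is fixed, gives
\[
 \frac{\Delta_m^r d_\ell(m)}{d_\ell(m)}\to\sum_{k=0}^r(-1)^{r-k}\binom rk2^k=(2-1)^r=1.
\]
Hence $\Delta_m^r d_\ell(m)\sim d_\ell(m)\sim 2^m m^{\ell-1/4}/(\ell!\,\Gamma(3/4))$. For $\ell=0$ I would cite \eqref{eq:zero-column} directly, using $\Gamma(m+3/4)/\Gamma(m+1)\sim m^{-1/4}$. For $\ell\ge1$ the asymptotic comes from \eqref{eq:column-growth} with $y=m$ and $\lambda=\ell$.

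For the second claim, apply the first asymptotic at $m$ and at $m+1$. Since $\Delta_m^r d_\ell(m+1)$ is the same operator evaluated at the shifted index,
\[
 \frac{\Delta_m^r d_\ell(m+1)}{\Delta_m^r d_\ell(m)}\sim\frac{2^{m+1}(m+1)^{\ell-1/4}}{2^m m^{\ell-1/4}}\to2.
\]
Eventual positivity then follows because $\Delta_m^r d_\ell(m)/d_\ell(m)\to1>0$ and $d_\ell(m)>0$.

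There is no real obstacle here. The only point that needs care is that the key sum equals $1^r=1$, which is nonzero, so the first-order asymptotic suffices and no cancellation requires higher-order expansion terms. This holds uniformly for every fixed $r$, so no correction terms of order $1/m$ enter.
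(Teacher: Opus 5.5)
Your proposal is correct and follows essentially the same route as the paper. The paper also derives $d_\ell(m+j)/d_\ell(m)\to 2^j$ from the growth formula and expands $\Delta_m^r$ binomially to get $\Delta_m^r d_\ell(m)/d_\ell(m)\to(2-1)^r=1$, from which the ratio limit $2$ and eventual positivity follow. You also correctly use \eqref{eq:zero-column} for $\ell=0$ and \eqref{eq:column-growth} for $\ell\ge1$, which is how the paper obtains the growth formula stated before the corollary.
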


\begin{proof}
Let $f_m=d_\ell(m)$. For fixed $j\ge0$, $f_{m+j}/f_m\to2^j$. Thus
\[
 \frac{\Delta_m^rf_m}{f_m}=\sum_{j=0}^r(-1)^{r-j}\binom rj\frac{f_{m+j}}{f_m} \longrightarrow\sum_{j=0}^r(-1)^{r-j}\binom rj2^j=1.
\]
Hence $\Delta_m^rf_m\sim f_m$, from which eventual positivity and the consecutive ratio limit follow.
\end{proof}

\section{Normalized coefficient ratios}\label{sec:reverse}

In this section, we prove Theorems~\ref{thm:binomial-normalization} and~\ref{thm:binomial-total-positivity}. We construct moment representations for the normalized sequence $\{b_{\ell+1+n}/b_{\ell+1}\}_{n\ge0}$ and all its positive real powers. The proof also establishes strict infinite log-monotonicity of $\{b_m\}_{m\ge\ell+1}$. We begin with an auxiliary function $H(\ell,y)$ whose logarithm has a positive Laplace representation.

For an integer $\ell\ge0$, define
\[
 H(\ell,y)=\frac{y-\ell+1}{y-\ell}\frac{D(\ell,y-1)D(\ell,y+1)}{D(\ell,y)^2}, \qquad y>\ell+1/2.
\]
At integer arguments, $H(\ell,m)=\frac{m-\ell+1}{m-\ell}c_m^{(\ell)}$. Consequently,
\begin{equation}\label{eq:two-normalizations}
 \frac{c_m^{(\ell)}}{\binom m{\ell+1}}=\frac{(\ell+1)H(\ell,m)}{(m-\ell+1)\binom m\ell},\qquad m\ge\ell+1.
\end{equation}
The factor $(y-\ell+1)/(y-\ell)$ removes the dependence on $\ell$ from the binomial contribution, since
\[
 \frac{y-\ell+1}{y-\ell} \frac{\binom{y-1}{\ell}\binom{y+1}{\ell}}{\binom y\ell^{\!2}} =\frac{y+1}{y},\qquad y>\ell+\tfrac12.
\]
For every $\ell\ge0$, the negative logarithmic derivative of $D(\ell,y)/(2^y\binom y\ell)$ with respect to $y$ has the positive Laplace representation \eqref{eq:normalized-log-derivative}, with kernel $k(\ell,t)$.

\begin{proposition}\label{cor:normalized-ratio-cm}
For every integer $\ell\ge0$, the logarithm of $H(\ell,\ell+1+z)$ is strictly completely monotone in $z$ on $[0,\infty)$. Thus $(-1)^r\Delta_m^r\log H(\ell,m)>0$ for $r\ge0$ and $m\ge\ell+1$, and $H(\ell,m)>1$. For $m\ge\ell+2$, with $n=m-\ell$,
\begin{equation}\label{eq:sharp-transposed-bound}
 \frac{(c_m^{(\ell)})^2}{c_{m-1}^{(\ell)}c_{m+1}^{(\ell)}} <\frac{n^3(n+2)}{(n-1)(n+1)^3}.
\end{equation}
The left side is also greater than one when $\ell\ge1$.
\end{proposition}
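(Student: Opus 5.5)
The plan is to write $\log H(\ell,y)$ as a second central difference of a function whose derivative has a Laplace representation, and then read off complete monotonicity from the kernel. Set
\[
G(y)=\log\frac{D(\ell,y)}{2^y\binom y\ell}.
\]
The identity preceding the proposition gives
\[
\log H(\ell,y)=\log\frac{y+1}{y}+G(y-1)+G(y+1)-2G(y).
\]
The binomial factors contribute $\log\frac{y+1}{y}$ and the powers of $2$ cancel.

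By Lemma~\ref{lem:coefficient-kernel}, $G'(y)=-\int_0^\infty e^{-yt}k(\ell,t)\,dt$. Integrating, the second difference of $G$ becomes
\[
G(y-1)+G(y+1)-2G(y)=-\int_0^\infty e^{-(y-1)t}(1-e^{-t})^2\frac{k(\ell,t)}{t}\,dt.
\]
This is legitimate since $k(\ell,t)/t$ is integrable against $e^{-(y-1)t}(1-e^{-t})^2$ for $y>\ell+1/2\ge 1/2$, using the boundedness of $k$ at $0$ and the bound $O(e^{-3t/4})$ at infinity. We also have
\[
\log\frac{y+1}{y}=\int_0^\infty e^{-yt}\frac{1-e^{-t}}{t}\,dt.
\]
Combining the two,
\[
\log H(\ell,y)=\int_0^\infty e^{-yt}\frac{1-e^{-t}}{t}\Bigl[1-(e^{t}-1)k(\ell,t)\Bigr]dt.
\]
Lemma~\ref{lem:coefficient-kernel} gives $k(\ell,t)\le k(0,t)=(e^{t/4}-1)/(e^t-1)$, so the bracket is at least $1-(e^{t/4}-1)=2-e^{t/4}$. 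This is not positive for large $t$, so the crude bound fails.

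\textbf{Main obstacle.} Positivity of the bracket must therefore come from a sharper bound, and this is where I expect the real work. Write $u=e^{-t/4}$. Then $e^t-1=S(1-u)/u^4$, and
\[
(e^t-1)k(0,t)=\frac{u^3(1-u)}{u^4}=\frac{1-u}{u},
\]
which exceeds $1$ for $u<1/2$. A direct comparison with $k(0,t)$ is therefore impossible, and the decay of $e^{-yt}$ has to be used. My plan is to absorb a factor $e^{-t}$ by shifting. Since $y\ge \ell+1$ at the relevant points, write $e^{-yt}=e^{-(y-1)t}e^{-t}$ and regroup the integrand as
\[
e^{-(y-1)t}\frac{1-e^{-t}}{t}\Bigl[e^{-t}-(1-e^{-t})k(\ell,t)\Bigr].
\]
With $u=e^{-t/4}$ and the upper bound $k\le u^3/S$,
\[
(1-e^{-t})\frac{u^3}{S}=(1-u)u^3,
\]
while $e^{-t}=u^4$. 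The new bracket is therefore at least $u^4-u^3+u^4=u^3(2u-1)$, which again fails for small $u$.

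So I would instead use the exact form of $k(\ell,t)$ from \eqref{eq:coefficient-kernel} for $\ell>0$, together with the density bound \eqref{eq:logQ-density-bound}. I would split the kernel into the explicit digamma part and the $\eta$ part. The explicit part yields Gamma ratios whose second difference can be checked directly. The $\eta$ part enters with a favorable sign, because the term $-tu^{4\ell+4}\eta$ lowers $k$. I expect the right choice of shift, a base point $y-a$ for a suitable $a\in[0,1]$, to make the explicit bracket a positive combination of exponentials. Finding that shift is the crux of the argument.

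\textbf{Consequences.} Once $\log H(\ell,\ell+1+z)=\int_0^\infty e^{-zt}\,d\gamma(t)$ holds with $\gamma$ a positive, nonzero measure, strict complete monotonicity follows by differentiating under the integral. Formula \eqref{eq:laplace-difference} then gives $(-1)^r\Delta_m^r\log H(\ell,m)>0$, and the case $r=0$ gives $H>1$.

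For \eqref{eq:sharp-transposed-bound}, write $c_m^{(\ell)}=\frac{n}{n+1}H(\ell,m)$ with $n=m-\ell$. Then
\[
\log\frac{(c_m^{(\ell)})^2}{c_{m-1}^{(\ell)}c_{m+1}^{(\ell)}}=-\Delta^2\log H(\ell,m-1)+\Bigl[2\log\frac{n}{n+1}-\log\frac{n-1}{n}-\log\frac{n+1}{n+2}\Bigr].
\]
The term $-\Delta^2\log H$ is negative by the case $r=2$, and the bracket equals $\log\frac{n^3(n+2)}{(n-1)(n+1)^3}$. This gives the upper bound. The lower bound for $\ell\ge1$ is Corollary~\ref{cor:transposed-consequences}.
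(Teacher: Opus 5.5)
There is a genuine gap, and it comes from a sign error in your second-difference formula, not from any real difficulty. Since $G'(v)=-\int_0^\infty e^{-vt}k(\ell,t)\,dt$ with $k>0$, $G'$ is negative and increasing. So $G$ is convex and $G(y-1)+G(y+1)-2G(y)>0$. Concretely,
\[
 G(y+1)-G(y)=-\int_0^\infty e^{-yt}\frac{1-e^{-t}}{t}k(\ell,t)\,dt,\qquad
 G(y)-G(y-1)=-\int_0^\infty e^{-yt}\frac{e^{t}-1}{t}k(\ell,t)\,dt.
\]
Subtracting gives, with a plus sign,
\[
 G(y-1)+G(y+1)-2G(y)=\int_0^\infty e^{-yt}\frac{2\cosh t-2}{t}k(\ell,t)\,dt.
\]
Therefore
\[
 \log H(\ell,y)=\int_0^\infty e^{-yt}\frac{1-e^{-t}}{t}\bigl[1+(e^t-1)k(\ell,t)\bigr]\,dt.
\]
The bracket is positive simply because $k(\ell,t)>0$ by Lemma~\ref{lem:coefficient-kernel}. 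After substituting $y=\ell+1+z$, this is exactly the paper's kernel $\widehat k(\ell,t)$, and that is the key step of the proof. Your ``Consequences'' paragraph then matches the paper's argument: finite differences via \eqref{eq:laplace-difference}, $H>1$ from $r=0$, the bound \eqref{eq:sharp-transposed-bound} from $r=2$ at $m-1$, and the lower bound from Corollary~\ref{cor:transposed-consequences}.

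As written, though, your proposal never establishes positivity of the representing measure. The ``main obstacle'' is left open. The remedy you sketch, using the $\eta$-term and a shifted base point, is aimed at an incorrect formula and so cannot succeed. It cannot even start at $\ell=0$, where $k(0,t)=u^3/S$ has no $\eta$-term. That case would have served as a check. From \eqref{eq:zero-column}, $H(0,y)=(y+3/4)/(y-1/4)$, whose logarithm is $\int_0^\infty e^{-yt}\frac{1-e^{-t}}{t}e^{t/4}\,dt$. This agrees with the bracket $1+(e^{t/4}-1)=e^{t/4}$ under the correct sign, whereas your sign gives $2-e^{t/4}$, which changes sign.
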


\begin{proof}
Fix $\ell\ge0$ and $y>\ell+1/2$. In \eqref{eq:normalized-log-derivative}, replace $y$ by $v$ and integrate over $[y-1,y]$ and $[y,y+1]$, as in \cite[eq.~(4.8)]{XieZhangUnified}. Subtract the integral over $[y,y+1]$ from the integral over $[y-1,y]$. Using the binomial identity above, we obtain
\[
 \log H(\ell,y)=\log\frac{y+1}{y}+\int_0^\infty e^{-yt}\frac{2\cosh t-2}{t}k(\ell,t)\,dt.
\]
Frullani's formula reads
\[
 \log\frac{y+1}{y}=\int_0^\infty e^{-yt}\frac{1-e^{-t}}t\,dt.
\]
Combining the two integrals and substituting $y=\ell+1+z$, we obtain
\[
 \log H(\ell,\ell+1+z)=\int_0^\infty e^{-zt}\widehat k(\ell,t)\,dt,
\]
where
\[
 \widehat k(\ell,t)=\frac{e^{-(\ell+1)t}}t[1-e^{-t}+(2\cosh t-2)k(\ell,t)]>0.
\]
The kernel $\widehat k(\ell,t)$ is bounded near zero and is $O(e^{-(\ell+3/4)t}/t)$ at infinity. Hence we may differentiate under the integral sign to every order for $z>-1/2$. Since $\widehat k(\ell,t)>0$, $\log H(\ell,\ell+1+z)$ is strictly completely monotone in $z$.

The strict signs of the finite differences follow from \eqref{eq:laplace-difference}. The case $r=0$ implies $H(\ell,m)>1$. For $m\ge\ell+2$, take $r=2$ at $m-1$. Using $H(\ell,m)=(n+1)c_m^{(\ell)}/n$, we obtain
\begin{align*}
 \frac{(c_m^{(\ell)})^2}{c_{m-1}^{(\ell)}c_{m+1}^{(\ell)}}
 &=\frac{n^3(n+2)}{(n-1)(n+1)^3}
 \frac{(H(\ell,m))^2}{H(\ell,m-1)H(\ell,m+1)}\\
 &<\frac{n^3(n+2)}{(n-1)(n+1)^3}.
\end{align*}
For $\ell\ge1$, Corollary~\ref{cor:transposed-consequences} gives the lower bound of one.
\end{proof}

\begin{proof}[\normalfont\bfseries Proof of Theorems~\ref{thm:binomial-normalization} and~\ref{thm:binomial-total-positivity}]
Fix $\ell\ge0$. By \eqref{eq:two-normalizations}, the function
\[
 J(\ell,y)=\log((\ell+1)!)+\log H(\ell,y)-\log(y-\ell+1)-\sum_{j=0}^{\ell-1}\log(y-j),\qquad y\ge\ell+1,
\]
interpolates $\log b_m$. The sum is empty when $\ell=0$. For $k\ge1$ and $y>a$,
\[
 (-1)^k\frac{d^k}{dy^k}[-\log(y-a)]
 =\frac{(k-1)!}{(y-a)^k}>0.
\]
Together with Proposition~\ref{cor:normalized-ratio-cm}, this implies
\[
 (-1)^k\partial_y^k J(\ell,y)>0,
 \qquad k\ge1,\quad y\ge\ell+1.
\]

For the moment representation, define
\[
 g(z)=J(\ell,\ell+1)-J(\ell,\ell+1+z),\qquad z\ge0.
\] Since $J(\ell,y)$ is strictly decreasing in $y$, $g(0)=0$ and $g(z)>0$ for $z>0$. For every integer $k\ge0$,
\[
 (-1)^k g^{(k+1)}(z)
 =(-1)^{k+1}\partial_y^{k+1}J(\ell,\ell+1+z)>0.
\] Thus $g'$ is completely monotone. Moreover, $g''<0$, so $g$ is a nonlinear Bernstein function. By Lemma~\ref{lem:bernstein-moments}, for every $\tau>0$ there is a probability measure $\mu_{\ell,\tau}$ on $[0,1]$ with infinitely many support points in $(0,1]$ such that
\[
 \left(\frac{b_{\ell+1+n}}{b_{\ell+1}}\right)^\tau=e^{-\tau g(n)}=\int_0^1 t^n\,d\mu_{\ell,\tau}(t),\qquad n\ge0.
\]
Since $\tau>0$ is arbitrary, $\{b_{\ell+1+n}/b_{\ell+1}\}_{n\ge0}$ is an infinitely divisible probability Hausdorff moment sequence. The Hankel matrix of these moments is strictly totally positive. Multiplying $\mu_{\ell,\tau}$ by $b_{\ell+1}^{\tau}$ produces a finite positive measure on $[0,1]$ whose $n$-th moment is $b_{\ell+1+n}^\tau$. Each minor of order $r$ is multiplied by the positive factor $b_{\ell+1}^{r\tau}$, so strict total positivity is preserved.

For strict infinite log-monotonicity, apply \eqref{eq:difference-cube} to the derivatives of $J$:
\[
 (-1)^j\Delta_m^2\log(\mathcal{R}^j b)_m=(-1)^j\Delta_m^{j+2}J(\ell,m)>0,\qquad j\ge0.
\]
Thus $\{b_m\}_{m\ge\ell+1}$ is strictly infinitely log-monotonic. In particular, $b_m^2<b_{m-1}b_{m+1}$ for $m\ge\ell+2$. Substituting the definition of $b_m$ and simplifying the binomial coefficients gives
\[
 \frac{(c_m^{(\ell)})^2}{c_{m-1}^{(\ell)}c_{m+1}^{(\ell)}} <\frac{\binom m{\ell+1}^{\!2}}{\binom{m-1}{\ell+1}\binom{m+1}{\ell+1}} =\frac{m(m-\ell)}{(m+1)(m-\ell-1)}.
\]
\end{proof}

The positive Laplace representation of $\log H(\ell,y)$ in $y$ provides a moment representation for every positive power of $H(\ell,m)$.

\begin{corollary}\label{cor:normalized-ratio-moments}
For every integer $\ell\ge0$ and every $p>0$, there is a finite positive measure $\sigma_{\ell,p}$ on $[0,1]$ such that
\[
 (H(\ell,\ell+1+n))^p=\int_0^1 z^n\,d\sigma_{\ell,p}(z),\qquad n\ge0.
\]
Its total mass is $(H(\ell,\ell+1))^p$, and the Hankel matrix
\[
 \left[\left(H(\ell,\ell+1+i+j)\right)^p\right]_{i,j\ge0}
\]
is strictly totally positive. For every $p>0$, the sequence
\[
 \left\{\left(\frac{H(\ell,\ell+1+n)}{H(\ell,\ell+1)}\right)^p\right\}_{n\ge0}
\]
is a probability Hausdorff moment sequence.
\end{corollary}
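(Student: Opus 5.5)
The plan is to start from the Laplace representation obtained in the proof of Proposition~\ref{cor:normalized-ratio-cm},
\[
 \log H(\ell,\ell+1+z)=\int_0^\infty e^{-zt}\widehat k(\ell,t)\,dt,\qquad z\ge0,
\]
where $\widehat k(\ell,t)>0$ is bounded near zero and is $O(e^{-(\ell+3/4)t}/t)$ at infinity. In particular $\widehat k(\ell,\cdot)$ is integrable on $(0,\infty)$. Hence
\[
 \nu(dt)=\widehat k(\ell,t)\,dt
\]
is a finite positive measure on $(0,\infty)$ with total mass $\log H(\ell,\ell+1)>0$. We may then write
\[
 \log\frac{H(\ell,\ell+1+z)}{H(\ell,\ell+1)}=-\int_0^\infty(1-e^{-zt})\,d\nu(t),
\]
so that
\[
 \bigl(H(\ell,\ell+1+n)/H(\ell,\ell+1)\bigr)^p=e^{-p f(n)},\qquad f(z)=\int_0^\infty(1-e^{-zt})\,d\nu(t).
\]

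The function $f$ is a Bernstein function with $f(0)=0$. Its derivative $f'(z)=\int_0^\infty te^{-zt}\,d\nu(t)$ is completely monotone; this is finite for $z>0$, and also at $z=0$ because $t\widehat k$ is integrable. Moreover $f''<0$ strictly, so $f$ is not linear. Lemma~\ref{lem:bernstein-moments} then gives, for every $p>0$, a probability measure $\sigma'$ on $[0,1]$ with infinitely many support points in $(0,1]$ whose $n$-th moment is $(H(\ell,\ell+1+n)/H(\ell,\ell+1))^p$. This is the last assertion of the corollary.

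Alternatively, and more directly, since $\nu$ is finite, $f(z)=\nu((0,\infty))-\int e^{-zt}\,d\nu$. Thus $e^{-pf}$ equals a constant times $\exp(p\int e^{-zt}\,d\nu)$. Expanding the exponential as $\sum_k (p^k/k!)\,(\mathcal L\nu)^k$, each power is the Laplace transform of the convolution power $\nu^{*k}$, so $\exp(p\mathcal L\nu)$ is the Laplace transform of the compound-Poisson-type measure $\sum_k p^k\nu^{*k}/k!$, with $\nu^{*0}=\delta_0$. Pushing forward under $t\mapsto e^{-t}$ gives a measure on $[0,1]$ directly. Either route works; I would use the lemma.

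Set
\[
 \sigma_{\ell,p}=H(\ell,\ell+1)^p\,\sigma'.
\]
This is a finite positive measure on $[0,1]$ with $n$-th moment $H(\ell,\ell+1+n)^p$ and total mass $H(\ell,\ell+1)^p$, the zeroth moment. Strict total positivity of $[H(\ell,\ell+1+i+j)^p]$ follows from \eqref{eq:moment-minors} and the infinitely many positive support points, exactly as in Theorem~\ref{thm:real-reciprocal-tp}. Scaling a measure by a positive constant multiplies each order-$r$ minor by a positive factor, so positivity is preserved.

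The only point needing care is the integrability of $\widehat k$ near $0$ and the resulting finiteness of $\nu$. Boundedness near zero comes from $1-e^{-t}=O(t)$ and $(2\cosh t-2)k=O(t^2)$ divided by $t$, using the boundedness of $k$ from Lemma~\ref{lem:coefficient-kernel}. This is exactly what justifies writing the Bernstein function with $f(0^+)=0$ and identifying the total mass. Everything else is an application of Lemma~\ref{lem:bernstein-moments}.
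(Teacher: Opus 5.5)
Your proposal is correct and is essentially the paper's proof. The paper likewise sets $\Lambda(x)=\log H(\ell,\ell+1+x)$ and notes that $\Lambda(0)-\Lambda(x)$, which is your $f$, is a nonlinear Bernstein function vanishing at zero. It then applies Lemma~\ref{lem:bernstein-moments} with $\tau=p$ and rescales the resulting probability measure by $e^{p\Lambda(0)}=H(\ell,\ell+1)^p$, which preserves strict total positivity.
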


\begin{proof}
For $x\ge0$, let $\Lambda(x)=\log H(\ell,\ell+1+x)$. By Proposition~\ref{cor:normalized-ratio-cm}, $\Lambda$ is strictly completely monotone. The function $\Lambda(0)-\Lambda(x)$ is therefore nonnegative and vanishes at zero. Its derivative $-\Lambda'$ is completely monotone, and its second derivative $-\Lambda''$ is strictly negative. Thus it is a nonlinear Bernstein function. Applying Lemma~\ref{lem:bernstein-moments} with $\tau=p$, we obtain a probability measure $\widehat\sigma_{\ell,p}$ such that
\[
 \int_0^1 z^n\,d\widehat\sigma_{\ell,p}(z)=e^{p(\Lambda(n)-\Lambda(0))},\qquad n\ge0,
\]
and the Hankel matrix of these moments is strictly totally positive. Set $\sigma_{\ell,p}=e^{p\Lambda(0)}\widehat\sigma_{\ell,p}$. The stated mass and moment identities follow. Every minor of order $k$ is multiplied by $e^{kp\Lambda(0)}>0$, so strict total positivity is preserved.
\end{proof}

\section*{Declaration of generative AI}
During the preparation of this manuscript, the authors used generative AI tools
for assistance with language revision, manuscript organization, and the
development and checking of mathematical arguments and code. The authors reviewed and edited the resulting material and take full
responsibility for the content of the manuscript.

\end{document}